\documentclass[11pt]{article}
\usepackage[a4paper,margin=1in]{geometry}
\usepackage{amsmath,amsthm,amssymb,mathtools}
\usepackage{graphicx}
\usepackage{subcaption}
\usepackage{authblk}

\usepackage{hyperref}
\hypersetup{colorlinks=true,linkcolor=blue,citecolor=blue,urlcolor=blue}
\usepackage{xcolor}
\usepackage{booktabs}
\usepackage{mathabx}
\usepackage{bbm}

\usepackage{algorithm}
\usepackage{algpseudocode}

\usepackage{wrapfig}
\usepackage{caption}
\usepackage{adjustbox}

\usepackage{todonotes}

\usepackage{url}

\usepackage{cite}

\usepackage{tikz}
\usetikzlibrary{calc}
\usetikzlibrary{cd, arrows, shapes}
\usepackage[all]{xy}

\newtheorem{theorem}{Theorem}[section]
\newtheorem{lemma}[theorem]{Lemma}
\newtheorem{proposition}[theorem]{Proposition}
\newtheorem{corollary}[theorem]{Corollary}
\theoremstyle{definition}
\newtheorem{definition}[theorem]{Definition}
\newtheorem{remark}[theorem]{Remark}

\newcommand{\RR}{\mathbb R}

\newcommand{\CalM}{\mathcal M}

\newcommand{\init}{u_0} 

\newcommand{\figwidth}{0.75\linewidth}

\usepackage{enumerate}

\newcommand{\subjclass}[2][2020]{%
  \par\medskip
  \noindent\textbf{#1 Mathematics Subject Classification.} #2\par
}

\newcommand{\keywords}[1]{%
  \par\medskip
  \noindent\textbf{Keywords and phrases.} #1\par
}

\title{Reduced Order Modeling of One-Dimensional Conservative PDEs via the Cumulative Distribution Transform
\thanks{HA, AS, and SS are partially supported by the Office of Naval Research (ONR) under Award NO: N00014-24-1-2147, NSF grant DMS-2408877, and the Air Force Office of Scientific Research (AFOSR) under Award NO: FA9550-25-1-0231. GR, IM, KP are partially supported by NIH Award GM130825, and ONR Award N000142212505.}}

\author[1]{Harbir Antil}
\author[2]{Rocío Díaz Martín}
\author[3]{Ivan V. Medri}
\author[3]{Kristofor E. Pas}
\author[3]{Gustavo K. Rohde}
\author[1]{Aryan Saxena}
\author[1]{Sarswati Shah}

\affil[1]{Center for Mathematics and Artificial Intelligence and Department of Mathematical Sciences, George Mason University,  Fairfax, Virginia 22030}
\affil[2]{Department of Mathematics, Florida State University, FL 32306}
\affil[3]{University of Virginia,
Charlottesville, VA 22908}

\date{}

\begin{document}

\maketitle

\vspace{-1cm}

\begin{abstract}We propose a reduced order modeling (ROM) framework for one-dimensional conservative PDEs based on the cumulative distribution transform (CDT). The CDT maps nonnegative, equal-mass states into a Hilbert space in which onde-dimensional Wasserstein distances become weighted $L^2$ distances and translations become affine shifts. This makes the transform especially suited for transport-dominated dynamics, where Eulerian linear-subspace ROMs often suffer from slow decay of Kolmogorov widths. \par
We study this phenomenon for scalar conservative dynamics by analyzing the solution manifold in CDT coordinates. For linear transport, the transformed solution manifold is contained in the two-dimensional space spanned by the transformed initial datum and the constant function, and has zero Kolmogorov $2$-width. For nonlinear hyperbolic conservation laws, we prove two complementary types of estimates: robust $O(n^{-1})$ bounds that rely only on the conservative transport structure and remain meaningful after shock formation, and sharper $O(n^{-2})$ bounds in smooth pre-shock regimes. For conservative advection-diffusion, we show that the CDT trajectory remains within distance $O(\sqrt{DT})$ of the pure-transport plane, and we also obtain sharper $O(D^2T^2)$ estimates under additional regularity or away from initial layers. In both cases, the zero 2-width behavior of linear transport is recovered as the diffusion coefficient tends to zero. \par
Motivated by these estimates, we develop a CDT-POD numerical scheme: snapshots are mapped to CDT space, Proper Orthogonal Decomposition (POD) is performed in transformed coordinates, and the inverse CDT is used to reconstruct physical states. Numerical experiments for several transport-dominated dynamics show that CDT-POD can capture solution manifolds with substantially fewer modes than Eulerian POD. 
\end{abstract}

\subjclass[2020]{41A46, 49Q22, 65F55, 35L65, 35Q49.}

\keywords{Optimal transport, Cumulative Distribution Transform, Reduced Order Modeling, 
Proper Orthogonal Decomposition, Kolmogorov widths, scalar conservation laws.}

\clearpage
\tableofcontents

\section{Introduction and Related Work}\label{sec: intro}

    Conservative partial differential equations (PDEs) are central models in fluid dynamics, kinetic theory, Hamiltonian systems, and many other areas of continuum science \cite{evans2010partial,dafermos2005hyberbolic,cercignani1988boltzmann,morrison1998hamiltonian}. Their numerical approximation often requires high-dimensional simulations or measurements, yet the resulting solution families may contain exploitable low-dimensional structure once they are represented in appropriate coordinates. This observation is the starting point of reduced order modeling (ROM), whose goal is to replace high-fidelity models by lower-dimensional surrogates that retain the dominant behavior of the dynamics \cite{FBlack_PSchulze_BUnger_2020a,BPeherstorfer_2020a,amsallem2012nonlinear}. Such reductions are useful for data compression, flow analysis and control, forecasting, and data-driven discovery of governing equations \cite{rowley2017model,brunton2016discovering,gonnella2026nonlinear}.
    
    A standard ROM pipeline begins with a collection of solution snapshots and seeks a low-dimensional linear space that approximates them well. Singular Value Decomposition (SVD), Proper Orthogonal Decomposition (POD) \cite{berkooz1993proper,HAntil_MHeinkenschloss_DCSorensen_2013a}, and Dynamic Mode Decomposition (DMD) \cite{schmid2022dynamic,tu2013dynamic} are fundamental tools of this type. Their performance is closely related to the approximability of the solution manifold by finite-dimensional linear spaces. Kolmogorov $n$-widths quantify the smallest possible worst-case error achievable by an $n$-dimensional linear approximation, and they therefore provide a natural theoretical lens for understanding when linear ROMs can succeed. This point of view is classical in reduced-basis and projection-based ROMs, appears in discussions of their limitations \cite{algoritmy,KCarlberg_MBarone_HAntil_2017a}, and has also been made precise for linear dynamical systems through connections with Hankel singular values \cite{unger2019kolmogorov,HAntil_MHeinkenschloss_RHoppe_DCSorensen_2010b,HAntil_MHeinkenschloss_RHoppe_2010c}.

    The main obstruction addressed in this work is that many mass-preserving scalar conservative PDEs generate moving profiles. This includes, for example, first-order hyperbolic conservation laws, where contacts, shocks, rarefactions, and traveling fronts arise, as well as conservative advection-diffusion equations, where advective motion is coupled with parabolic smoothing. In Eulerian coordinates, such moving features are poorly aligned across time. Consequently, even simple translated profiles may require many linear modes. This phenomenon is often described as a Kolmogorov barrier: for translated profiles with jump discontinuities, the Kolmogorov $n$-width decays only at the rate $n^{-1/2}$, and analogous barriers arise for wave problems with discontinuous data \cite{greif2019decay,arbes2025kolmogorov}; see also \cite[Section 5.1]{algoritmy}. At the discrete level, this slow decay is often reflected in the snapshot singular-value spectrum.
    
    Several approaches have been developed to mitigate this transport barrier. One class replaces fixed linear spaces by nonlinear manifolds, adaptive bases, dynamically transformed modes, or other geometric parametrizations \cite{peherstorfer2022breaking,hesthaven2026nonlinear,wotte2026model}. Another class seeks to align transported features before applying linear compression, using registration, displacement interpolation, monotone rearrangement, transported subspaces, or transport maps \cite{cagniart2018model,rim2018model,rim2018displacement,rim2023manifold,taddei2020registration,welper2017interpolation,nonino2023overcoming}. A closely related viewpoint is Lagrangian: if the coordinates move with the transported structure, then much of the apparent Eulerian complexity can disappear. This idea appears classically in Lagrangian formulations of advection-diffusion, where advection is absorbed into the coordinates and the remaining evolution is governed by an effective diffusion operator \cite{thiffeault2003advection}; it also underlies recent Lagrangian ROM and physics-informed approaches for advection-dominated dynamics \cite{lu2020lagrangian,mojgani2017lagrangian,mojgani2023kolmogorov}. These methods can be highly effective, but they typically require choosing, estimating, or learning deformation maps, which can become a delicate task. 
    
    In this work, we use the Cumulative Distribution Transform (CDT) \cite{park2018cumulative}, denoted by $u\mapsto\widehat{u}$, as a transport-based coordinate representation and perform POD in CDT space. For one-dimensional  nonnegative, equal-mass states, translations become additive shifts under the CDT. Thus, a family of translated profiles that is poorly approximated by Eulerian linear spaces becomes an affine line in CDT space. This indicates that solution manifolds dominated by advective motion may become significantly more compressible after transformation. Moreover, a central advantage of using the CDT for ROM is that it maps probability densities into a Hilbert space in which one-dimensional Wasserstein distances become $L^2$ distances. This realization of the Wasserstein metric as an $L^2$ norm permits the use of structural properties of Wasserstein geometry (that characterize mass displacement in a physically intuitive way)  in tandem with the computational power of Euclidean space methods. 

    The CDT was introduced in \cite{park2018cumulative} and has since been used in image and signal processing,  classification, parametric  estimation, and data-driven modeling \cite{rubaiyat2020parametric,rubaiyat2022nearest,rubaiyat2024end,rubaiyat2024data}; implementations are available through PyTransKit \cite{pytranskit}. In this work, we use the notation of the CDT and inverse-CDT framework, including its measure formulation, closely aligned with the optimal transport viewpoint, as developed in \cite{akram22sign,ivan2024data}. Our aim is to build on the insights provided by the Radon-CDT model-reduction framework of \cite{ren2021model,long2025reduced} by developing and testing a CDT-POD approach for scalar conservative PDEs.
    
    Closely related work \cite{liu2022neural,jin2025parameterized,zuo2026numerical} represents evolving densities as push-forwards of a fixed reference measure through parameterized families of transport maps. Our CDT-POD approach shares this Lagrangian transport-map viewpoint, but learns a data-adapted POD subspace from the exact one-dimensional monotone optimal transport maps.
    
    We consider scalar evolution problems for a state $u:(a,b)\times[0,T]\to\mathbb R$, with the convention $a=-\infty$ and $b=+\infty$ when the dynamics are posed on the whole real line. ROMs may be used either to represent the solution compactly or to infer the governing evolution operator from data. In the present work we focus on the first task: we seek accurate (linear) low-dimensional compression and reconstruction of the solution manifold
    \begin{equation}\label{eq: sol manifold}
         \mathcal M_{[t_0,T]}:=\{u(\cdot,t):\, t\in[t_0,T]\},
        \qquad \text{for some }t_0\ge0.
    \end{equation}
    
    The evolution equations considered in this article are one-dimensional scalar conservative PDEs, including first-order hyperbolic conservation laws such as linear advection, inviscid Burgers' equation, traffic flow, and Buckley--Leverett dynamics, as well as conservative advection-diffusion. This conservative structure ensures that, under suitable boundary or decay conditions, each snapshot can be interpreted as a nonnegative density with common total mass, making the CDT applicable.

    The CDT-POD procedure is simple to describe: given a collection of discretized time snapshots $ \{u(\cdot, t_j)\}$ (as in Figure \ref{fig: discretization}),  transform them by applying the CDT $\{\widehat{u}(\cdot,t_j)\}$ (as in Figure \ref{fig: HFM and Transform}), perform POD in transform space, and then invert the transform. The objective of this work, however, is not only to apply this procedure numerically, but also to move toward a better understanding of why it works, when it should be expected to work, and where it may fail. Since any improvement over native POD depends on the geometry of the transformed solution manifold, we study the transformed dynamics more explicitly and use this analysis to interpret the behavior observed in the numerical experiments.
    
    \begin{figure}[h!]
        \centering
        \includegraphics[width=1\linewidth]{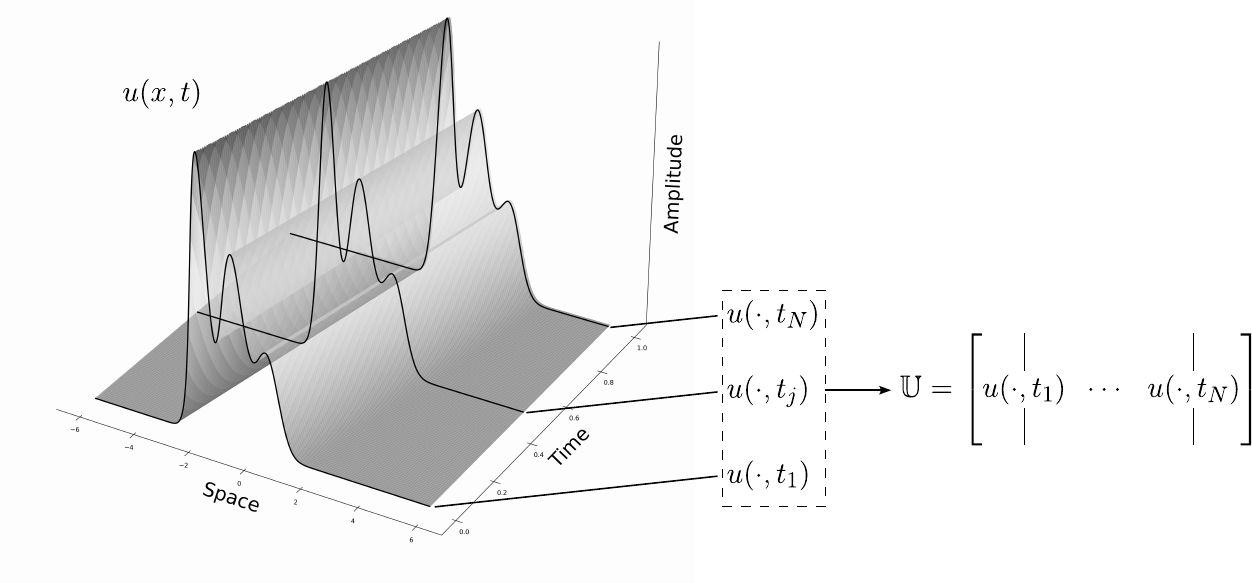}
        \vspace{-0.4in}
        \caption{Sketch of a solution $u(x,t)$ of a conservative evolution equation and its discretized snapshot matrix $\mathbb U$.}
        \label{fig: discretization}
    \end{figure}
    
    \begin{figure}[h!]
        \centering
        \includegraphics[width=1\linewidth]{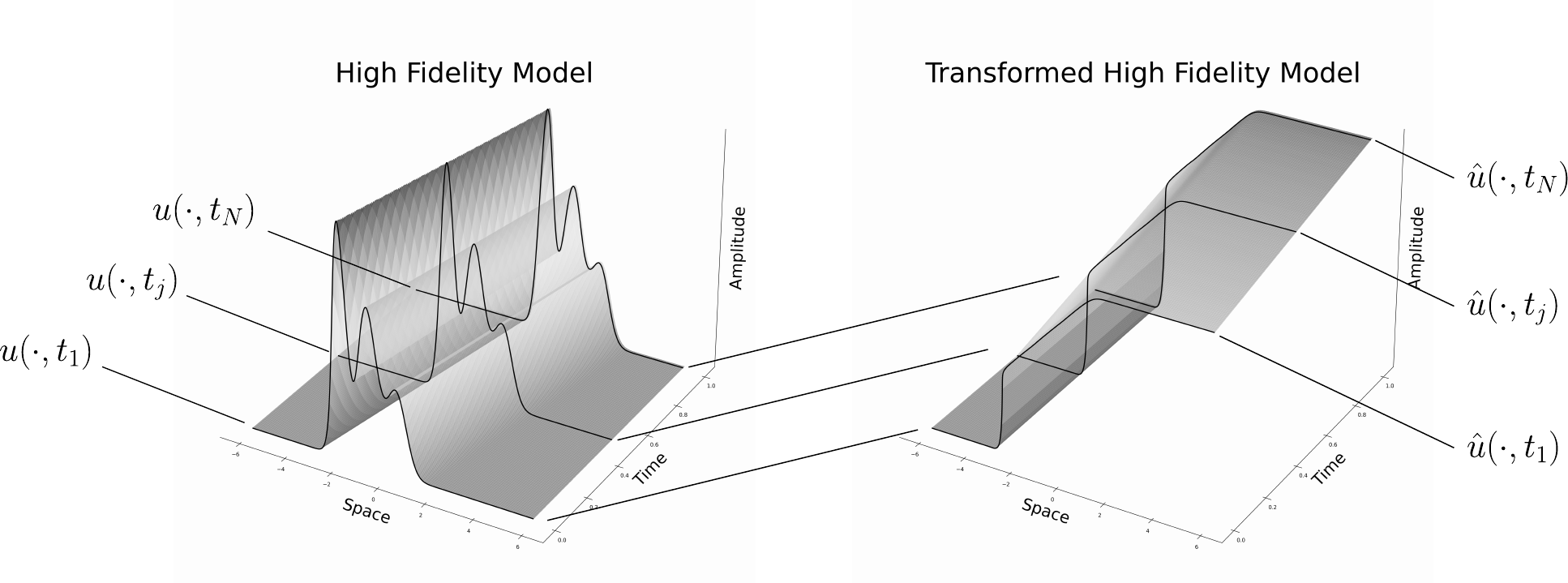}
        \vspace{-0.4in}
        \caption{Sketch of solution snapshots in native space (left) and CDT space (right), and corresponding snapshots $\{u(\cdot,t_j)\}_{j=1}^N$ and $\{\widehat u(\cdot,t_j)\}_{j=1}^N$.}
        \label{fig: HFM and Transform}
    \end{figure}
    
    A useful way to think about the CDT representation is that it gives two complementary pictures of the same evolution. First, the transformed solution can be viewed as a curve $t \mapsto \widehat u(\cdot,t)$ in a weighted $L^2$ space. Second, for each mass label $\xi$, the curve $t \mapsto \widehat u(\xi,t)$ tracks the position of a fixed amount of accumulated mass. In this sense, the transformed variables behave like characteristic-type coordinates. The important difference is that these cumulative-mass trajectories can still be defined in situations where classical characteristic curves are no longer the right global description, for instance after shocks form or for non-hyperbolic equations. This dual point of view is useful because different parts of the analysis require different types of information. Sometimes it is enough to work with the CDT trajectory as a curve $t\mapsto \widehat u(\cdot,t)$ in a weighted $L^2$ space and use general approximation results for Hilbert-valued curves. In other situations, especially for sharper estimates, it is useful to understand the pointwise behavior of $t\mapsto \widehat u(\xi,t)$. This is why the paper studies both $L^2$-type regularity and pointwise regularity of the transformed dynamics.
     
    The results obtained here should be understood in this spirit. For linear advection, the CDT representation makes the solution manifold exactly low-dimensional. For smooth hyperbolic equations before shocks, the transformed dynamics are closely related to classical characteristics, and this leads to Kolmogorov $n$-width estimates of order $\mathcal O(n^{-2})$ under suitable regularity assumptions. For entropy solutions, where classical characteristics may break down, one can still obtain estimates of order $\mathcal O(n^{-1})$ using the conservative structure of the equation and the transport interpretation of the CDT. For advection-diffusion, the CDT trajectory can be compared with the purely advective one, and the effect of diffusion can be bounded in a way that is consistent with the advective case as the diffusion coefficient tends to zero.
    
    In the numerical part of the paper, we do not compute Kolmogorov widths directly, since this is not feasible in practice. Instead, we compare native POD and CDT-POD using singular-value decay and reconstruction errors. These quantities are not the same as the theoretical $n$-widths, but they are the standard computable diagnostics for the low-rank behavior of the snapshot matrices. The purpose of the theory is therefore not to predict the numerical errors exactly, but to explain why the transformed snapshots are often more compressible and why this behavior depends on the underlying mass motion.
    
\paragraph{Contributions.}
    This work proposes and studies CDT-POD for one-dimensional scalar conservative PDEs, with the goal of understanding when the transport-based coordinates improve linear compressibility of solution manifolds. The main contributions are as follows.
    
    \begin{enumerate}[(i)]
        \item \textbf{A CDT-POD pipeline for conservative PDEs.} We apply the transform-reduce-invert strategy to nonnegative equal-mass snapshots: the snapshots are mapped to CDT space, POD is performed in the transformed coordinates, and the inverse CDT is used to reconstruct the physical states. This gives a nonlinear approximation in Eulerian variables while retaining a linear reduced representation in transform space. See an illustration in Figure \ref{fig: CDT_pipeline} (bottom). When the reduced CDT map remains monotone, the reconstruction is a valid
        mass-preserving density; the possible loss of monotonicity under linear
        truncation is identified as a limitation and a motivation for constrained
        extensions.
        
        \item \textbf{Equations in Eulerian, semi-Lagrangian, and CDT coordinates.} We derive formal transformed-coordinate representations of the conservative dynamics, see Table \ref{tab:equations-in-CDT-space}. These formulas clarify how cumulative-mass coordinates relate to Eulerian variables and to characteristic-type descriptions of transport. 
        
        \item \textbf{A geometric interpretation of transformed solution manifolds}. We view the CDT solution manifold both as a curve ($t\mapsto \widehat u(\cdot,t)$) in a weighted $L^2$ Hilbert space and as a family of cumulative-mass trajectories ($t\mapsto \widehat u(\xi,t)$). This helps explain
        why transport-dominated dynamics may become simpler in CDT space and why both $L^2$ and pointwise regularity estimates appear naturally.
        
        \item \textbf{Kolmogorov-width estimates in CDT space.} 
        For hyperbolic conservation laws, we prove robust $O(n^{-1})$ estimates for
        entropy solutions and sharper $O(n^{-2})$ estimates in smooth pre-shock
        regimes. For conservative advection-diffusion, we prove bounds showing that
        the transformed dynamics remain close to the pure-transport plane, with
        sharper estimates under additional regularity or away from initial layers.
        
        \item \textbf{Numerical comparison with Eulerian POD.} We compare native POD and CDT-POD through singular-value decay and
        reconstruction errors. The experiments show improved low-rank structure in
        CDT space for transport-dominated problems, while also revealing the need
        for monotonicity-aware reduced approximations.
    \end{enumerate}

    Our main results on quantitative Kolmogorov $n$-width estimates are Theorems \ref{thm:hyperbolic-cdt-lipschitz} and \ref{thm:hyperbolic-cdt-width}  for the case of  hyperbolic conservation laws, and Theorems \ref{thm:ADR-CDT-widths} and \ref{thm:ADR-CDT-widths-R} for advection-diffusion. The estimates in Theorems \ref{thm:hyperbolic-cdt-lipschitz} and
    \ref{thm:ADR-CDT-widths} should be read as robust transport-metric estimates, rather than as regularity estimates as in the case of Theorems \ref{thm:hyperbolic-cdt-width} and \ref{thm:ADR-CDT-widths-R}. That is, Theorems \ref{thm:hyperbolic-cdt-lipschitz} and \ref{thm:ADR-CDT-widths} do not rely on a smooth parametrization of the solution by classical characteristics, nor on differentiating the CDT trajectory. Instead, they use only the conservative structure of the
    equation, the interpretation of the dynamics as motion of mass, and the CDT isometry between $W_2$ and a weighted $L^2$ space. In the hyperbolic case, Theorem \ref{thm:hyperbolic-cdt-lipschitz}  gives a global-in-time Lipschitz bound for entropy solutions, and hence a general $\mathcal O(n^{-1})$ width estimate, even in regimes where shocks may form and the classical characteristic map is no longer invertible. In the advection-diffusion case (i.e., Theorem \ref{thm:ADR-CDT-widths}), the solution is compared with the purely advected density: diffusion only moves the CDT trajectory away from the rank-two transport plane by a distance of order $\sqrt{DT}$, where $D>0$ is the diffusion coefficient. These results are therefore complementary to the higher-regularity estimates obtained in Theorems \ref{thm:hyperbolic-cdt-width} and \ref{thm:ADR-CDT-widths-R}. Theorem \ref{thm:hyperbolic-cdt-width} provides sharper $\mathcal O(n^{-2})$ bounds, but only under smoothness, positivity, and pre-shock. For advection-diffusion, Theorem \ref{thm:ADR-CDT-widths-R} yields Kolmogorov $2$-width bound of order $D^2T^2$ away from initial layers or under regularity conditions on the initial condition. Thus the low-regularity theorems explain the stability and robustness of the CDT representation for conservative dynamics, whereas the higher-regularity results explain the improved compressibility observed in smooth regimes.
    
    Our analysis combines tools from optimal-transport and its dynamic formulation based on the continuity-equation to ensure mass-preservation \cite{benamou2000computational,otto2001geometry,ambrosio2005gradient,Santambrogio-OTAM, villani2003topics}, Hilbert-valued curve approximation and standard facts about Bochner integration \cite{hytonen2016analysis}, and general theory on scalar conservation laws \cite{evans2010partial}.
    
\paragraph{Organization.}
    Section \ref{sec: our systems} introduces the class of scalar conservative PDEs and the mass-preserving setting, and motivates the need for the CDT by relating it to characteristic curves. Section \ref{sec: CDT} recalls the CDT, its inverse, its isometric relation with the Wasserstein distance (earth mover's distance), and its properties such as translation and dilation equivariance. Section \ref{sec: examples x2} shows two examples, advection-diffusion with Dirac initial data and the linear transport equation, where dimensionality reduction with CDT can be shown analytically. Section \ref{sec:kolmogorov} recalls the notion of Kolmogorov widths as a quantifier for dimensionality reduction and the SVD method as a practical tool to find linear subspace approximations. Section \ref{sec:CDT-POD-method} presents the CDT-POD algorithm in practice and explains how to interpret the errors shown in the numerical example figures of Sections \ref{sec: numerics hyperbolic case} and \ref{sec: num adv-diff}. 
    
    Section \ref{sec: theory for conservative eq in general} begins the analysis of general scalar conservative equations of  form \eqref{eq: IBVP} in transformed coordinates (CDT-space). Section \ref{sec: lagrangian form} shows formal derivations of the evolution of PDEs in hybrid settings (using simultaneously native and transformed coordinates), and in fully transformed coordinates. Sections \ref{sec: reg theory} and \ref{sec: extra reg theory} introduce general regularity results needed to make the formal derivations valid for certain equations. The objective of both sections is the same but the techniques and applicability differ, and that is why we split the discussion in two parts. 
    
    Sections \ref{sec: hyperbolic case} and \ref{sec:adr-widths} have the main theorems and experiments of the paper. Sections \ref{sec: kolm hyp} and \ref{sec: kol adr} study CDT-space Kolmogorov width estimates for the family of hyperbolic equations \eqref{eq: hyp} and the particular case of advection-diffusion \eqref{eq:AD}, respectively. Numerical experiments are presented alongside the relevant PDE classes studied in Sections \ref{sec: hyperbolic case} and \ref{sec:adr-widths}, in Sections \ref{sec: numerics hyperbolic case} and \ref{sec: num adv-diff}. The code to reproduce the experiments of this paper, together with a tutorial, is available online at \path{https://github.com/rohdelab/CDT_POD}.
    
    Section \ref{sec: conclusions} summarizes conclusions and open directions.

\section{Background and the CDT-POD Method}

    This section motivates and introduces the main representation studied in the paper. The goal is to give the reader different examples, connections, and intuitions for why the CDT is a useful tool for representing the evolution of certain PDEs. We start with the scalar conservative equations considered in this work and motivate the use of mass-based coordinates by relating them to characteristic curves. We then recall the Cumulative Distribution Transform (CDT), its inverse, its Wasserstein interpretation, and some of its basic linearization properties. After that, we present two canonical examples where the CDT reveals simple linear low-dimensional structure. Finally, we introduce Kolmogorov widths and the CDT-POD procedure, emphasizing how the resulting reduced approximations should be compared in native and transformed coordinates, and how to interpret the error curves and figures reported later.

\subsection{Scalar Conservative PDEs and Motivation of This Work}\label{sec: our systems}

    In this work we consider the class of problems given by scalar evolution equations in conservative form,
    \begin{equation}\label{eq: IBVP}
        u_t+\partial_x (f[u])=0, 
        \qquad x\in (a,b),\quad t>0,
        \qquad u(x,0)=\init(x),
    \end{equation}
    where the function $u(x,t)$ is unknown, the \textit{flux} $f[u](x,t):=f(u(x,t),u_x(x,t),\dots)$ is determined by a known function $f$, and the \textit{initial state} $\init$ is  given. In the cases where $f[u](x,t) = f(u(x,t))$ we simply write $f(u)$. We have included a few examples in Table \ref{tab:scalar_conservation_laws}, which will be later examined in our experiments.

    \begin{wraptable}[9]{r}{0.55\textwidth}
    \vspace{-0.2in}
        \centering
        \caption{Examples of 1D scalar conservative PDEs.}
        \vspace{-0.15in}
        \begin{tabular}{ll}
            \toprule
            \textbf{Name} & \textbf{Equation} \\
            \midrule
            Advection--Diffusion 
            & $\displaystyle u_t + \partial_x(Au - Du_x) = 0$ \\[3pt]
    
            Inviscid Burgers 
            & $\displaystyle u_t + \partial_x\left(\tfrac{u^2}{2}\right) = 0$ \\[8pt]
    
            Traffic Flow 
            & $\displaystyle u_t + \partial_x\bigl(u(1-u)\bigr) = 0$ \\[4pt]
    
            Buckley-Leverett 
            & $\displaystyle u_t + \partial_x\left(\tfrac{u^2}{u^2+(1-u)^2}\right) = 0$ \\
            \bottomrule
        \end{tabular}
        \label{tab:scalar_conservation_laws}
    \end{wraptable}
    Throughout the paper, whenever the CDT is applied to a time-dependent solution, we assume that the snapshots are nonnegative and have common total mass. This is ensured either by zero-flux boundary conditions $f[u](a,t)=f[u](b,t)=0$ on a bounded interval $I = (a,b)$ or by sufficient decay of the flux at infinity on $\mathbb R$. \\

    When $f$ depends only on $u$, the problem becomes hyperbolic, and \eqref{eq: IBVP} can be thought of as a generalization of the transport problem
    \begin{equation}\label{eq: pure advection}
        \begin{cases}
            u_t + A \, u_x = 0, 
            \qquad  
            u(x,0)=\init(x),\\
            u(a,t)=0  \text{ if } A>0,  \text{ or }   
            u(b,t)=0 \ \text{ if } A<0 \quad \text{for } t>0,
        \end{cases}
    \end{equation}
    in the sense that the solutions are obtained by domain deformations (at least before shocks) that keep the values of $u$ in correspondence with the values of $u_0$ (see for example Figure \ref{fig: hyperbolic_conservation_laws_intro}).
    
    \begin{figure}[ht!]
        \centering
        \includegraphics[width=1\linewidth]{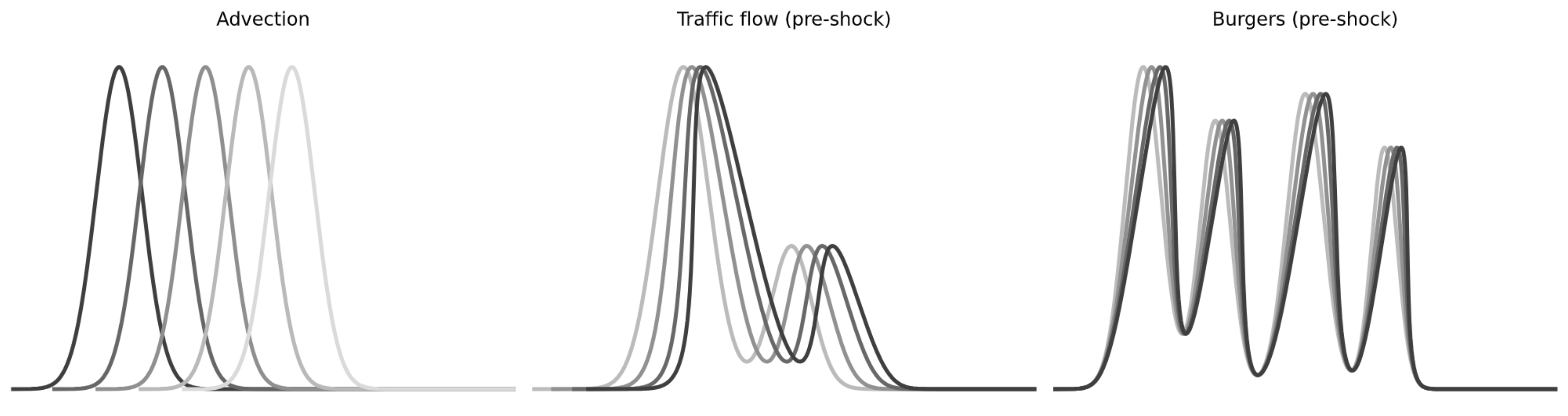}
        \vspace{-0.3in}
        \caption{Examples of hyperbolic conservation laws.}
        \label{fig: hyperbolic_conservation_laws_intro}
    \end{figure}

    As mentioned in Section \ref{sec: intro}, one objective of ROM is to find low-dimensional representations of the solution manifold $\mathcal M_{[t_0,T]}$. This could be naively done by performing SVD to the snapshot matrix $\mathbb U$ shown in Figure \ref{fig: discretization}, in order to approximate it by a rank-$k$ representation given by its truncated SVD.
    This approach implicitly assumes that $\mathcal{M}_{[t_0,T]}$ is close to a low-dimensional subspace. Nevertheless, even for the simple transport equation \eqref{eq: pure advection}, where $\mathcal M_{[0,T]} = \{\init(\cdot-At): \, t\in[0,T]\}$ consists of translations of a fixed initial profile $u_0$, that is not the case (see the left-most panel in Figure \ref{fig: hyperbolic_conservation_laws_intro}). In fact, for $u_0$ compactly supported, the family of translates $\mathcal{M}_{[0,T]}$ is not contained in any finite-dimensional linear subspace of functions and the decay of the singular values of the snapshot matrix $\mathbb{U}$ might be very slow. This is a reason why ROMs, such as POD, are known to have problems with advective behavior (translations) \cite{reiss2018shifted} (see also \cite{algoritmy,greif2019decay,arbes2025kolmogorov,peherstorfer2022breaking}).
    Nevertheless, as a manifold, $\mathcal{M}_{[t_0,T]}$ is parametrized only by the parameter $t$ and thus is expected to have a low-dimensional structure under the appropriate representation. \\
    
    This opens up the idea of using \textit{transforms}, which at their core are invertible maps between spaces of functions, and can be thought of as changes of coordinates. Of course, not every transform will provide a  linear low-dimensional representation of the solution manifold of a PDE. For instance, by applying the classical  \emph{Fourier transform} on the space variable to the linear transport dynamic \eqref{eq: pure advection} on the real line, the new transformed equation reads as 
        \begin{equation*}
        \mathcal F{u}_t(\xi,t) + iA\xi\,\mathcal F{u}(\xi,t)=0,
        \qquad \mathcal F {u}(\xi,0)=\mathcal F{\init}(\xi).
        \end{equation*}
    and so, for each $\xi\in\mathbb{R}$, $\mathcal F{u}(\xi,t)=\mathcal F{\init}(\xi)\,e^{-iA\xi t}= \int_{\RR} u_0(x) \, e^{-iA\xi t} \, e^{-i\xi x} \, dx $. Thus, the Fourier transform diagonalizes the dynamics. However, for generic initial data,  the transformed manifold $\mathcal{F}\left(\mathcal{M}_{[0,T]}\right) :=\{\mathcal{F}u(\cdot,t)\}$ intuitively lives in an infinite-dimensional space whose `basis' functions are the complex exponentials $\phi_\xi(x) :=e^{-i\xi x}$ with $\xi \in \mathbb{R}$. \\

    For a hyperbolic equation $u_t + (f(u))_x = 0$ with initial condition $u_0$, one transform that provides a linear low-dimensional representation of the solution manifold could be defined by the use of the method of characteristics.  Let $\Phi(\xi,t)$ be the characteristic flow satisfying 
    \begin{equation}\label{eq: height_characteristic_flow_ODE}
        \begin{cases}
            \Phi_t(\xi,t) = f'(u_0(\xi)),\\
            \Phi(\xi,0) = \xi.
        \end{cases}
    \end{equation}
    It is known that the curve $t\to \Phi(\xi,t)$, called characteristic curves,  tracks the position in space where $u(\cdot,t)$ has the same value (height) as the initial condition $u_0$ at the labeled initial position $\xi$ (see for example Fig. \ref{fig: charac-adv}). That is, $\Phi(\xi,t)$ can be defined implicitly by the \textit{registration} formula
    \begin{equation}\label{eq: implicit characteristic flow}
        u(\Phi(\xi,t),t) = u_0(\xi),
    \end{equation}
    which provides a unique correspondence between $u(\cdot,t)$ and $\Phi(\cdot,t)$ (if we restrict our search for $\Phi$ to increasing functions of $\xi$). It is easy to show from $\eqref{eq: height_characteristic_flow_ODE}$ that for $t$ before shock times, 
    \begin{equation*}
        \Phi(\xi,t) = \xi + tf'(u_0(\xi)).
    \end{equation*}
    By proposing a  transformation map of the form $u(x,t)\longmapsto \Phi(\xi,t)$, we have that in the new  space the transformed solution manifold is contained in a linear space with dimension at most two:
    $$\{\Phi(\cdot,t)\}_{t\in[0,T]} \subset \mathrm{span}\{ \mathrm{Id}, f'\circ \init \}.$$
    
    This seems to have solved our linear low-dimensional representation problem. Nevertheless, several difficulties arise if one tries to build a transform directly from characteristic curves. The function $\Phi(\xi,t)$ may be difficult to infer from data; the representation is valid only as long as the characteristic map $\xi\mapsto\Phi(\xi,t)$ remains invertible; after shocks or rarefactions form, one must work with entropy weak solutions and generalized notions of characteristics; and the approach does not extend directly to non-hyperbolic equations, such as viscous conservation laws that are important to better understand entropy solutions.\\

    \begin{wrapfigure}[8]{r}{0.6\linewidth}
        \centering
        \vspace{-0.19in}
        \includegraphics[width=0.6\linewidth]{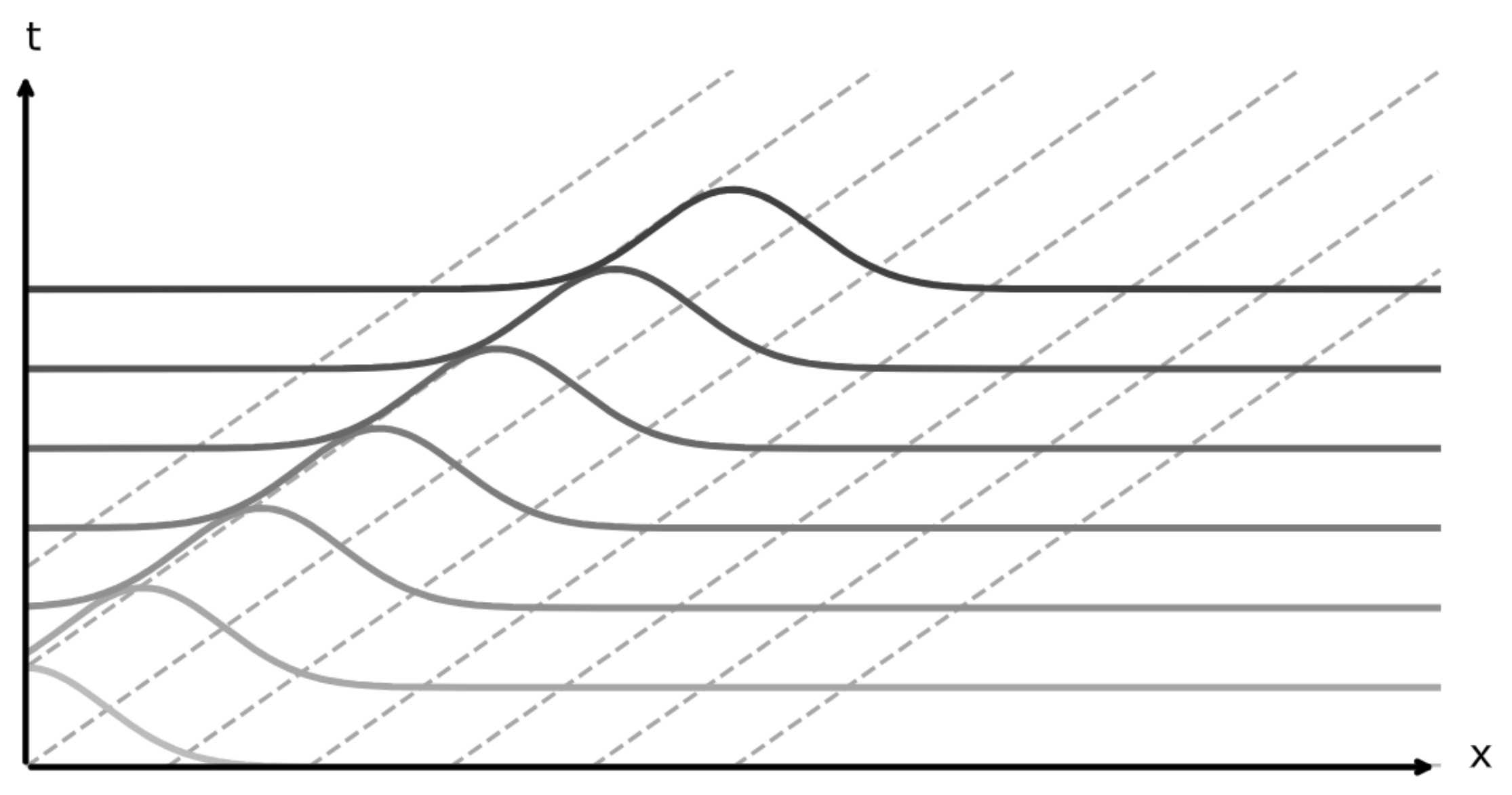}
        \vspace{-0.12in}
        \caption{Characteristics of pure advection equation.}
        \label{fig: charac-adv}
        \vspace{0.5in}
    \end{wrapfigure}

    Motivated by the Lagrangian viewpoint of the characteristics and the mass conservation nature of our equations of interest, and in a similar fashion to tracking the level curves of $u$, we replace the height-based labels of the characteristic curves by mass-based labels. Specifically, instead of tracking the point $\Phi(\xi,t)$ that originated from the spatial location $\xi$ and satisfies \eqref{eq: implicit characteristic flow}, we track the point $\widehat u(\xi,t)$ at which the evolving density $u(\cdot,t)$ has accumulated the same total mass that a reference density $r$ has accumulated up to $\xi$ (see Figure \ref{fig: mass_preservation}). That is, our transform is implicitly defined by
    \begin{equation}\label{eq: mass preserv}
        \int_{-\infty}^{\widehat u(\xi,t)}u(x,t)\,dx
        =
        \int_{-\infty}^{\xi}r(x)\,dx.
    \end{equation}

    \begin{figure}[ht!]
        \centering
        \begin{tikzpicture}[
        x=1cm,y=1cm,
        >={Stealth[length=2.8mm]},
        bluecurve/.style={draw=blue!70!black, line width=1pt},
        refcurve/.style={draw=black!60, line width=1pt},
        massfill/.style={fill=blue!30, draw=none},
        refmass/.style={fill=gray!35, draw=none},
        maparrow/.style={->, draw=blue!80!black, line width=1pt},
        flowarrow/.style={->, draw=blue!80!black, line width=1pt},
        pickline/.style={draw=blue!70!black, dashed, line width=0.9pt}
        ]

        \begin{scope}[shift={(-5.2,2.8)}]
            \draw[black!55] (-1.6,0) -- (1.8,0);
        
            \fill[massfill]
            (-1.35,0)
              .. controls (-1.12,0.02) and (-1.02,0.72) .. (-0.78,1.28)
              .. controls (-0.55,1.86) and (-0.20,2.12) .. (0.05,2.00)
              .. controls (0.22,1.92) and (0.30,1.55) .. (0.38,1.28)
              .. controls (0.42,1.10) and (0.47,0.98) .. (0.54,0.96)
              -- (0.54,0) -- cycle;
        
            \draw[bluecurve]
            (-1.35,0)
              .. controls (-1.12,0.02) and (-1.02,0.72) .. (-0.78,1.28)
              .. controls (-0.55,1.86) and (-0.20,2.12) .. (0.05,2.00)
              .. controls (0.22,1.92) and (0.30,1.55) .. (0.38,1.28)
              .. controls (0.42,1.10) and (0.47,0.98) .. (0.54,0.96)
              .. controls (0.62,0.94) and (0.70,1.22) .. (0.82,1.38)
              .. controls (1.00,1.60) and (1.26,1.18) .. (1.46,1.05)
              .. controls (1.64,0.92) and (1.70,0.30) .. (1.78,0);
        
            \draw[pickline] (0.54,0) -- (0.54,2.02);
        
            \node[blue!70!black] at (0.95,2) {$t_1$};
            \node[blue!70!black] at (0,-0.3) {$\widehat{u}(\xi,t_1)$};
        \end{scope}
        
        \begin{scope}[shift={(0,2.8)}]
            \draw[black!55] (-1.6,0) -- (1.8,0);
        
            \fill[massfill]
            (-1.32,0)
              .. controls (-1.10,0.05) and (-1.02,0.90) .. (-0.78,1.62)
              .. controls (-0.55,2.18) and (-0.18,2.18) .. (0.06,1.92)
              .. controls (0.20,1.76) and (0.27,1.38) .. (0.34,1.20)
              .. controls (0.42,1.02) and (0.53,0.94) .. (0.63,0.93)
              -- (0.63,0) -- cycle;
        
            \draw[bluecurve]
            (-1.32,0)
              .. controls (-1.10,0.05) and (-1.02,0.90) .. (-0.78,1.62)
              .. controls (-0.55,2.18) and (-0.18,2.18) .. (0.06,1.92)
              .. controls (0.20,1.76) and (0.27,1.38) .. (0.34,1.20)
              .. controls (0.42,1.02) and (0.53,0.94) .. (0.63,0.93)
              .. controls (0.75,0.92) and (0.86,1.20) .. (1.00,1.22)
              .. controls (1.18,1.24) and (1.36,0.84) .. (1.52,0.66)
              .. controls (1.66,0.52) and (1.73,0.16) .. (1.78,0);
        
            \draw[pickline] (0.63,0) -- (0.63,1.95);
        
            \node[blue!70!black] at (0.95,2) {$t_2$};
            \node[blue!70!black] at (-0.25,-0.3) {$\widehat{u}(\xi,t_2)$};
        \end{scope}
        
        \begin{scope}[shift={(5.2,2.8)}]
            \draw[black!55] (-1.6,0) -- (1.8,0);
        
            \fill[massfill]
            (-1.28,0)
              .. controls (-1.04,0.05) and (-0.94,0.74) .. (-0.72,1.34)
              .. controls (-0.48,1.88) and (-0.16,1.94) .. (0.06,1.72)
              .. controls (0.20,1.56) and (0.28,1.16) .. (0.38,1.08)
              .. controls (0.52,0.98) and (0.66,0.98) .. (0.78,0.98)
              -- (0.78,0) -- cycle;
        
            \draw[bluecurve]
            (-1.28,0)
              .. controls (-1.04,0.05) and (-0.94,0.74) .. (-0.72,1.34)
              .. controls (-0.48,1.88) and (-0.16,1.94) .. (0.06,1.72)
              .. controls (0.20,1.56) and (0.28,1.16) .. (0.38,1.08)
              .. controls (0.52,0.98) and (0.66,0.98) .. (0.78,0.98)
              .. controls (0.96,0.98) and (1.08,0.68) .. (1.20,0.66)
              .. controls (1.34,0.64) and (1.48,0.46) .. (1.62,0.42)
              .. controls (1.72,0.39) and (1.78,0.12) .. (1.84,0);
        
            \draw[pickline] (0.78,0) -- (0.78,1.78);
        
            \node[blue!70!black] at (0.95,2) {$t_3$};
            \node[blue!70!black] at (1.2,-0.3) {$\widehat{u}(\xi,t_3)$};
        \end{scope}
        
        \begin{scope}[shift={(0,0)}]
            \draw[black!55] (-2.6,0) -- (2.6,0);
        
            \fill[refmass]
            (-2.00,0)
              .. controls (-1.72,0.04) and (-1.52,0.78) .. (-1.15,1.18)
              .. controls (-0.86,1.45) and (-0.46,1.34) .. (-0.20,0.70)
              .. controls (-0.10,0.42) and (-0.04,0.18) .. (0,0)
              -- cycle;
        
            \draw[refcurve]
            (-2.00,0)
              .. controls (-1.72,0.04) and (-1.52,0.78) .. (-1.15,1.18)
              .. controls (-0.86,1.45) and (-0.46,1.34) .. (-0.20,0.70)
              .. controls (-0.05,0.34) and (0.00,0.16) .. (0.15,0.10)
              .. controls (0.34,0.03) and (0.54,0.56) .. (0.80,1.02)
              .. controls (1.05,1.44) and (1.42,1.20) .. (1.68,0.84)
              .. controls (1.90,0.54) and (2.02,0.10) .. (2.18,0);
        
            \fill (0,0) circle (1.5pt);
            \node[blue!70!black] at (0,-0.30) {$\xi$};
            \node[black!60] at (-0.02,-0.65) {\small reference $r$};
        \end{scope}
        
        \draw[flowarrow] (-2.7,2.95) -- (-2.1,2.95);
        \draw[flowarrow] ( 2.5,2.95) -- ( 3.1,2.95);
        
        \draw[maparrow] (-4.66,2.8) -- (0,0);
        \draw[maparrow] ( 0.63,2.8) -- (0,0);
        \draw[maparrow] ( 5.98,2.8) -- (0,0);
        
        \end{tikzpicture}
        \vspace{-0.1in}
        \caption{For each $t$, we define $\widehat{u}(\xi,t)$ as the (unique) point such that the identity \eqref{eq: mass preserv} is satisfied (shared areas). The blue curves in the top panel represent $u(\cdot,t)$ at different times $t=t_1,t_2,t_3$ and the gray curve on the bottom represents a fixed reference density $r$.}
        \label{fig: mass_preservation}
\end{figure}

    This $\widehat u(\cdot,t)$ coincides with what in optimal transport is called the optimal transport map from the reference density $r$ to the snapshot $u(\cdot,t)$, and some extra definitions and properties of this transform are given in the following Section \ref{sec: CDT}. Although a priori there are no guarantees that with this transform the manifold $\mathcal{M}_{[t_0,T]}$ will be represented in transformed space $\widehat{\mathcal{M}}_{[t_0,T]}$ as a low-dimensional linear space, we will show that in several cases it does. In particular, for the advection equation \eqref{eq: pure advection}, the curves $t\mapsto \widehat{u}(\xi,t)$ coincide with the characteristics. This says that we can interpret these trajectories as a generalization of characteristic curves, with the advantage that they can be defined even after shocks and rarefactions, and for non-hyperbolic conservative equations. Moreover, this transform has a closed form expression, see Definition \ref{def: CDT}, that is  easy to compute from data and has a closed form inverse transform, given by \eqref{eq: inv cdt}, that allows us to reconstruct the original manifold after applying reduction techniques in transform space. The trade-off for this generality is that the guaranteed two-dimensional representation that we obtained with characteristics is lost, but it still exhibits better compression than working directly in native space. The implicit formulation \eqref{eq: mass preserv} is also not fortuitous, it is in fact an implicit form to characterize the manifold of densities with the same total mass as the reference (modulo some other properties),  and the CDT is a global coordinate chart in that space. Coincidentally, the solution manifold $\mathcal{M}_{[t_0,T]}$ of conservative equations lives entirely inside the space of densities with the same total mass.

\subsection{The Cumulative Distribution Transform (CDT)}
\label{sec: CDT}

    We briefly recall the one-dimensional cumulative distribution transform(CDT) and its basic properties. The CDT represents a nonnegative density of fixed mass by the monotone transport map from a prescribed reference density to that density. This transport-based representation is particularly useful for advective dynamics: translations become affine shifts in transform space, and one-dimensional Wasserstein distances become weighted $L^2$ distances. These properties will be used below to explain why transport-dominated solution manifolds can become more compressible after applying the CDT.
    
    The CDT was introduced in \cite{park2018cumulative} (see \cite{antil2026discrete} for a notion of discrete CDT) and has since been used in signal processing, imaging, classification, and data analysis; see, for example, \cite{rubaiyat2020parametric,rubaiyat2022nearest,rubaiyat2024end,rubaiyat2024data}. Many of these methods are implemented in PyTransKit \cite{pytranskit}. In this work, we use the measure and transport-map viewpoint developed in \cite{akram22sign,ivan2024data}.

    \begin{definition}
        Let $I\subseteq \mathbb{R}$ be a (possibly unbounded) interval, and $u:I\to[0,\infty)$ a probability density. We define its cumulative distribution function (CDF), denoted $F_u:I\to [0,1]$ by
        \begin{equation}
            F_u(x) := \int_{I\cap(-\infty,x]} u(s) \, ds, 
        \end{equation}
        and its quantile $F_u^\dagger:(0,1)\to I$ by 
        \begin{equation}\label{eq: quantile}
            F_u^\dagger(p) := \inf\{x\in I: \, F_u(x)\geq p\}.
        \end{equation}
    \end{definition}
    
    \begin{remark}\label{rmk:quantile}
        The quantile function \eqref{eq: quantile} is a generalization of the inverse of $F_u$, since they coincide whenever the inverse exists. Nevertheless, it is not the only possible extension of the definition of the inverse. For example, one could replace the condition inside the infimum in \eqref{eq: quantile} with $F_u(x)>p$, and this could still be considered a generalized inverse (see, for example, \cite{akram22sign}). Moreover, the quantile function is defined on the open interval $(0,1)$ because, in some cases, defining it at the endpoints of the interval would result in values of $\pm\infty$, even if $u$ is compactly supported. Since these issues could lead to certain artifacts and to different possible definitions of the Cumulative Distribution Transform introduced below, we sometimes work with its definition in an almost-everywhere sense. In practice, however, we choose the particular representative given in terms of \eqref{eq: quantile} and work mostly with bounded domains.   
    \end{remark}

    \begin{definition}[CDT]\label{def: CDT}
        Let $K,I\subseteq \mathbb{R}$ (possibly unbounded) intervals. Let 
        $r:K\to[0,\infty)$ be a fixed probability density that we will call the reference. For any probability density $u:I\to[0,\infty)$ (with finite second moment if $I$ is unbounded) we define its Cumulative Distribution Transform (CDT) with respect to $r$, denoted $\widehat{u}:K\to I$, by 
        \begin{equation}\label{eq: cdt}
               \widehat{u}(\xi)\;:=\;F_u^{\dagger}(F_r(\xi)), 
        \end{equation}
        where $F_u^\dagger$ is the quantile of $u$ and $F_r$ is the CDF of $r$ and formula \eqref{eq: cdt} is interpreted in an $r(\xi)d\xi$-almost-everywhere sense. 
    \end{definition}
    
    \begin{remark}
        The CDT can also be implicitly defined, in an $r(\xi)d\xi$-almost-everywhere sense, as the function $\widehat{u}$ satisfying $F_u(\widehat{u}(\xi))=F_r(\xi)$ or, equivalently, in integral form,
        \begin{equation*}
        \int_{I\cap(-\infty,\widehat{u}(\xi)]}u(s)ds
        =
        \int_{K\cap(-\infty,\xi]}r(s)ds.
        \end{equation*}
    \end{remark}

    In most of this work, we assume that the reference $r$ and all the signals $u$ involved are defined on a common interval $I$, without distinguishing between the domain of the reference and those of the signals. Without loss of generality, one may also assume nonnegative signals of fixed total mass, not necessarily unit mass. 

    In general, given two probability densities $u_1,u_2$, we say that a map $T:\RR\to\RR$ pushes $u_1$ to $u_2$, denoted $T_\#(u_1(\xi)d\xi) = u_2(x)dx$, if 
    \begin{equation}\label{eq: def pushforward}
        \int_\RR \varphi(x) \, u_2(x) \, dx=\int_{\RR}\varphi(T(x)) \, u_1(x) \, dx   
    \end{equation}
    for every bounded Borel test function $\varphi$. Thus, from a measure theory perspective,  $\widehat u$ from Definition \ref{def: CDT} is determined by being a monotone map with the property
    \begin{equation}\label{eq: cdt-in}
        \widehat u_{\#}\big(r(\xi)\,d\xi\big)=u(x)\,dx.
    \end{equation}
    Moreover, this pushforward identity \eqref{eq: cdt-in} naturally describes the inverse transform. Indeed, given $\widehat u$, the measure with probability density function $u$ is recovered by $\widehat u_{\#}\big(r(\xi)\,d\xi\big)$. In particular, if $\widehat{u}$ is of class $C^1$ and strictly monotone with $\widehat{u}'>0$, then \eqref{eq: cdt-in} reads as
    $u(\widehat{u}(\xi))\,\widehat{u}'(\xi)=r(\xi)$, and the \emph{inverse CDT} (iCDT) admits a closed-form formula:
    \begin{equation}\label{eq: inv cdt}
        u(x) = (\widehat{u} \, ^\dagger)'(x) \, r(\widehat{u} \, ^\dagger(x)).
    \end{equation}
    If, in addition, the reference is taken as the constant function $r\equiv 1$ defined on $(0,1)$, the CDT can be written in \emph{quantile coordinates} $p=F_r(\xi)=\xi$ as
    $\widehat u(p)=F_u^\dagger(p)$. Thus, the CDT and iCDT simplify to: 
    \begin{equation*}
        u
        \qquad \underset{CDT}{\longrightarrow} \qquad \widehat{u} := F_u^\dagger  
        \qquad \underset{iCDT}{\longrightarrow} 
        \qquad u := (\widehat{u} \, ^\dagger)'.
    \end{equation*}

    We now recall the Wasserstein distance $W_2$ defined in the space $\mathcal{P}_2(\RR)$ of probability measures with finite second moment. For simplicity, we work with the case where $u_1,u_2$ are probability density functions inducing measures in $\mathcal{P}_2(\RR)$, which by abuse of notation, we denote as $u_1,u_2\in \mathcal{P}_2(\mathbb{R})$. The $2$-Wasserstein distance is  
    \begin{align*}
        W_2(u_1,u_2)^2:&=\min\left\{\int_{\RR\times \RR}|x-y|^2 \, d\pi(x,y) : \, \pi \text{ coupling of } u_1(x)dx \text{ and } u_2(y)dy\right\}\\
        &=\min\left\{\int_{\RR}|x-T(x)|^2\, u_1(x)\, dx : \, T_\# (u_1(x)dx)= u_2(x)dx\right\},
    \end{align*}
    where a \textit{coupling} of the measures $u_1(x)dx$ and $u_2(y)dy$ is a probability measure on $\RR\times\RR$ with marginal densities $u_1,u_2$. The equality of the two minimization problems defining $W_2$ holds because we are considering density functions.  Moreover, since we are working with one dimensional densities, the \textit{optimal transport map} $T^*$ that realizes the distance is characterized as the unique non-decreasing map (defined $u_1(x)dx$-a.e.) that pushes $u_1$ to $u_2$ \cite[Theorems 2.5 and 2.9]{Santambrogio-OTAM} (see also \cite{villani2003topics}). 
    
    The CDT provides an \emph{isometric} embedding of the Wasserstein space into the weighted $L^2(dr)$. Precisely, if $u\in \mathcal P_2(\RR)$, using the substitution $x=\widehat{u}(\xi)$  (i.e., using \eqref{eq: cdt-in} and the definition of pushforward \eqref{eq: def pushforward}), we have 
    \begin{equation}
        \|\widehat{u}\|^2_{L^2(dr)}:=\int|\widehat{u}(\xi)|^2 \, r(\xi) \, d\xi=\int x^2 \, u(x) \, dx<\infty, \label{eq: 2nd moment 1}
    \end{equation}
    that is,  $\widehat{u}\in L^2(dr)$ (the $L^2$-space weighted by the reference $r$). Now, to show the isometry, we use the substitution $x=\widehat{u}_1(\xi)$ and compute,
    \begin{equation}\label{eq:cdt-isometry}
        \|\widehat u_1-\widehat u_2\|_{L^2(dr)}^2 \;=\int |x-\widehat{u}_2(\widehat{u}_1^\dagger(x))|^2 \, u_1(x) \, dx =\; W_2(u_1,u_2)^2 \qquad \forall u_1,u_2\in \mathcal{P}_2(\RR),
    \end{equation}
    where for the last equality one uses that $\widehat{u}_2 \circ \widehat{u}_1^\dagger$ is a monotone map that pushes $u_1$ to $u_2$, thus it is the optimal transport map. 

    Transport theory describes translations and more general rearrangements in terms of mass displacement, with the Wasserstein distance providing a natural metric for quantifying the cost of such deformations. This geometry is physically meaningful but intrinsically nonlinear and therefore more difficult to analyze computationally. By contrast, $L^2$-methods operate in a linear Hilbert space but compare states pointwise and may fail to capture transport-dominated variability efficiently. The CDT bridges these perspectives by isometrically embedding the Wasserstein geometry into the $L^2(dr)$ space, preserving the transport-aware interpretation while enabling the use of linear and Euclidean-space techniques.\\
       
    We next record two basic composition properties  that explain why domain deformations become simple operations in CDT space.

    \begin{lemma}[Composition property of the CDT] \cite[Proposition 3.1]{akram22sign}
    \label{lem:cdt-composition}
        Let $I_1,I_2\subseteq\mathbb R$ be intervals, and let
        $g:I_2\to I_1$ be a strictly increasing $C^1$ diffeomorphism with
        $g'>0$. Let $u_1,u_2$ be probability densities on $I_1,I_2$ (respectively)  related by
        \begin{equation*}
            u_2(x)=g'(x)\,u_1(g(x)).
        \end{equation*}
        Let $\widehat u_1, \widehat u_2$ denote the CDTs of $u_1, u_2$  with respect to a fixed reference density $r$. Then
        \begin{equation*}
            \widehat u_2 = g^{-1}\circ \widehat u_1 .
        \end{equation*}
    \end{lemma}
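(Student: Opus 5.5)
The plan is to compare cumulative distribution functions and then pass to quantiles; this avoids any differentiability assumption on $\widehat u_1$. First, the change of variables $y=g(s)$ relates $F_{u_2}$ to $F_{u_1}$. Since $g:I_2\to I_1$ is a strictly increasing $C^1$ diffeomorphism, it is onto $I_1$, so for $x\in I_2$ we have $g(I_2\cap(-\infty,x])=I_1\cap(-\infty,g(x)]$. Therefore
\begin{equation*}
F_{u_2}(x)=\int_{I_2\cap(-\infty,x]} g'(s)\,u_1(g(s))\,ds=\int_{I_1\cap(-\infty,g(x)]}u_1(y)\,dy=F_{u_1}(g(x)),
\end{equation*}
that is, $F_{u_2}=F_{u_1}\circ g$. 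The substitution is legitimate for nonnegative integrands because $g$ is $C^1$ with $g'>0$.

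Next I would transfer this identity to quantiles. For $p\in(0,1)$, by \eqref{eq: quantile},
\begin{equation*}
F_{u_2}^\dagger(p)=\inf\{x\in I_2:\ F_{u_1}(g(x))\ge p\}.
\end{equation*}
Substituting $y=g(x)$ is a bijection $I_2\to I_1$. Because $g$ is increasing and $F_{u_1}$ is nondecreasing, the set $\{y\in I_1: F_{u_1}(y)\ge p\}$ is an up-set (a half-line intersected with $I_1$), and $\{x\in I_2: F_{u_1}(g(x))\ge p\}$ is its preimage under $g$. An increasing homeomorphism between intervals maps the infimum of an up-set to the infimum of its image. So $F_{u_2}^\dagger(p)=g^{-1}\big(F_{u_1}^\dagger(p)\big)$, which gives $F_{u_2}^\dagger=g^{-1}\circ F_{u_1}^\dagger$.

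Composing with $F_r$ and using Definition \ref{def: CDT} then yields $\widehat u_2=F_{u_2}^\dagger\circ F_r=g^{-1}\circ F_{u_1}^\dagger\circ F_r=g^{-1}\circ\widehat u_1$, $r$-a.e.

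As an independent check, one can argue through the pushforward characterization \eqref{eq: cdt-in}. The map $g^{-1}\circ\widehat u_1$ is nondecreasing, and it pushes $r\,d\xi$ to $(g^{-1})_\#(u_1\,dy)$. The change of variables above shows this measure equals $u_2\,dx$. Uniqueness of the monotone map pushing $r$ to $u_2$ (up to $r$-null sets) then gives the same conclusion.

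The main obstacle is the quantile step. The argument needs care in three places:
\begin{itemize}
\item the infimum may be attained at an endpoint of $I_1$, where it could equal $\pm\infty$ or lie outside the open interval;
\item flat pieces of $F_{u_1}$ must be handled;
\item the identity holds only in the almost-everywhere sense of Remark \ref{rmk:quantile}.
\end{itemize}
I would settle these by working with $p\in(0,1)$, where the quantile lies in the closure of $I_1$ and at the relevant points in $I_1$ itself. Where needed, I would extend $g$ continuously to the endpoints, and I would fall back on the pushforward-uniqueness argument to settle any boundary ambiguity.
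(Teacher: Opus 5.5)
Your proposal is correct and follows the paper's own route: a change of variables gives $F_{u_2}=F_{u_1}\circ g$, this is transferred to quantiles, and then you compose with $F_r$. Your up-set argument supplies the justification for the quantile step that the paper's sketch only asserts, and the boundary worries you list are in fact vacuous: for $p\in(0,1)$, continuity of $F_{u_1}$ (with limits $0$ and $1$ at the ends of $I_1$) already places $F_{u_1}^\dagger(p)$ in the interior of $I_1$, so the pushforward-uniqueness argument is a redundant check rather than a needed fallback.
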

        
    \begin{proof}[Proof sketch]
        Since $g$ is increasing and maps $I_2$ onto $I_1$, the change of variables
        $z=g(s)$ gives
        \begin{equation*}
            F_{u_2}(x)
            =
            \int_{I_2\cap(-\infty,x]} g'(s)u_1(g(s))\,ds
            =
            \int_{I_1\cap(-\infty,g(x)]} u_1(z)\,dz
            =
            F_{u_1}(g(x)).
        \end{equation*}
        Therefore, for $p\in(0,1)$, $F_{u_2}^\dagger(p)
            =
            g^{-1}\big(F_{u_1}^\dagger(p)\big)$.
        Composing with $F_r$ gives
        $$
            \widehat u_2
            =
            F_{u_2}^\dagger\circ F_r
            =
            g^{-1}\circ F_{u_1}^\dagger\circ F_r
            =
            g^{-1}\circ \widehat u_1.
        $$
    \end{proof}

    Two important special cases are translation and dilation.
    
    \begin{corollary}[Translation and dilation equivariance] \cite[Corollaries 2.3 and 2.4]{akram22sign}\label{cor:cdt-translation-dilation}
        Let $r$ be a fixed reference density, and let all CDTs below be computed with respect to this same reference. For a probability density $u\in\mathcal{P}_2(\mathbb R)$, define, for $\tau\in\mathbb R$ and $\beta>0$,
        \begin{equation*}
            (\mathcal T_\tau u)(x):=u(x-\tau),
            \qquad
            (\mathcal D_\beta u)(x):=\beta\,u(\beta x).
        \end{equation*}
        Then
        \begin{equation*}
            \widehat{\mathcal T_\tau u}(\xi)
            =
            \widehat u(\xi)+\tau,
            \qquad
            \widehat{\mathcal D_\beta u}(\xi)
            =
            \tfrac{1}{\beta} \, \widehat u(\xi).
        \end{equation*}
    \end{corollary}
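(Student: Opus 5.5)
The plan is to deduce both identities from Lemma \ref{lem:cdt-composition} by writing each operation as $u_2(x)=g'(x)\,u_1(g(x))$ for a suitable increasing affine diffeomorphism $g$ of $\mathbb R$. For translation, take $g(x)=x-\tau$. Then $g'\equiv 1$ and $(\mathcal T_\tau u)(x)=g'(x)\,u(g(x))$, so $g^{-1}(y)=y+\tau$ and the lemma gives $\widehat{\mathcal T_\tau u}=\widehat u+\tau$. For dilation, take $g(x)=\beta x$ with $\beta>0$. Then $g'\equiv\beta>0$ and $(\mathcal D_\beta u)(x)=\beta\,u(\beta x)=g'(x)\,u(g(x))$, so $g^{-1}(y)=y/\beta$ and we obtain $\widehat{\mathcal D_\beta u}=\tfrac1\beta\,\widehat u$.

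Before invoking the lemma, I will check that both transformed densities are admissible. Both are nonnegative with unit mass, by the change of variables $z=g(x)$. Both have finite second moment, since $\int x^2 u(x-\tau)\,dx=\int (z+\tau)^2u(z)\,dz<\infty$ and $\int x^2\beta u(\beta x)\,dx=\beta^{-2}\int z^2u(z)\,dz<\infty$. Hence $\mathcal T_\tau u,\mathcal D_\beta u\in\mathcal P_2(\mathbb R)$, and their CDTs are well defined with values in $L^2(dr)$ by \eqref{eq: 2nd moment 1}.

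The only point needing care is that the lemma's proof uses the identity $F_{u_2}^\dagger=g^{-1}\circ F_{u_1}^\dagger$ for the generalized inverse \eqref{eq: quantile}, not merely for a true inverse. For a strictly increasing continuous bijection $g$ of $\mathbb R$, this identity holds exactly:
\[
\{x: F_{u_1}(g(x))\ge p\}=g^{-1}\bigl(\{y: F_{u_1}(y)\ge p\}\bigr),
\]
and $g^{-1}$ is increasing and continuous, so it commutes with taking the infimum. Consequently the equalities hold pointwise for the chosen representative, not only $r\,d\xi$-a.e. No real obstacle is expected; the argument is a direct specialization of Lemma \ref{lem:cdt-composition}.
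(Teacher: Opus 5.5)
Your proposal is correct and follows the paper's route: the paper presents the corollary as the special case of Lemma~\ref{lem:cdt-composition} with $g(x)=x-\tau$ and $g(x)=\beta x$, exactly as you do. Your additional checks are sound and make explicit what the paper's proof sketch leaves implicit: that $\mathcal T_\tau u,\mathcal D_\beta u\in\mathcal P_2(\mathbb R)$, and that $F_{u_2}^\dagger=g^{-1}\circ F_{u_1}^\dagger$ holds for the generalized inverse because the increasing homeomorphism $g^{-1}$ commutes with infima. The only refinement is that the resulting identities hold at every $\xi$ with $F_r(\xi)\in(0,1)$, which is where the quantile is defined.
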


    In this work, for a time-dependent solution of a PDE $u(\cdot,t)$ with nonnegative equal-mass snapshots, we apply the CDT at each time:
    \begin{equation} 
        \widehat u(\xi,t)
        :=
        F_{u(\cdot,t)}^\dagger(F_r(\xi)).
    \end{equation}
    In particular, it satisfies
    \begin{equation}\label{eq: push-forward id}
        \widehat u(\cdot,t)_{\#}\big(r(\xi)\,d\xi\big)
        =
        u(x,t)\,dx.
    \end{equation}
    Thus the transformed solution may be viewed either as a curve $t\mapsto \widehat u(\cdot,t)$ in the Hilbert space $L^2(dr)$, or as a family of cumulative-mass trajectories indexed by the reference coordinate $\xi$. This dual interpretation is central to the analysis below.

\subsection{Two Analytical Examples of Dimensionality Reduction Using CDT}\label{sec: examples x2}

\paragraph{Advection-Diffusion with Dirac Initial Datum.}\label{sec: example}
    As a foundational example,  let us consider the advection-diffusion equation on $\RR$ with an impulse initial condition 
    \begin{align}\label{eq: conv diff}
        \begin{cases}
            u_t = D u_{xx} - A u_x\\
            u(x,0) = \delta_0(x).
        \end{cases}
    \end{align}
    For $t > 0$, the solution is a Gaussian density
    \begin{equation*}
        u(x,t) = \frac{1}{\sqrt{4\pi D t}}e^{\frac{-(x-At)^2}{4Dt}},
    \end{equation*}
    which can be obtained using the Fourier transform. In this form, it is not immediately apparent that the solution manifold $\mathcal M_{[t_0,T]}:=\{u(\cdot,t):\, t\in[t_0,T]\}$ can be recast as a low-dimensional linear space. In native coordinates, this family combines translation and diffusion-driven spreading, and its low-dimensional linear structure is not apparent. In CDT coordinates, however, the structure becomes explicit. With the aid of the CDT we can transform it into a two-dimensional linear space. Consider the functions
    \begin{equation*}
        h(x) := \tfrac{1}{\sqrt{4\pi D}}e^{-x^2/(4D)}, \qquad g(x;t) =  \frac{x-At}{\sqrt{t}}, \quad \text{with } g'(x;t) = 1/\sqrt{t}.
    \end{equation*}
    Then, the solution of \eqref{eq: conv diff} is $u(x,t) = g'(x;t) h(g(x;t))$. Taking $t>0$ as a fixed parameter, transforming $u$ using the CDT with respect to $x$, and applying the composition property of Lemma \ref{lem:cdt-composition}, we get 
    \begin{equation*}
        \widehat{u}(\xi,t) = \sqrt{t} \, \widehat{h}(\xi) + At \,  \mathbf{1}(\xi),
    \end{equation*}
    where $\mathbf{1}$ denotes the constant function one. This expression tells us that the evolution of the full system can be reduced to a two-dimensional system. That is, the CDT transform provides a representation of the solution in a new space generated by the basis $\mathcal B:=\{\widehat{h},\mathbf{1}\}$; and the functions $\sqrt{t}$ and $At$ are the coordinates at each time $t$ that describe the evolution of our state in the subspace generated by $\mathcal B$.

\paragraph{The Linear Transport Equation.}\label{sec:linear-transport-eq}
    To isolate the effect of transport on linear approximation spaces, we consider the linear advection equation \eqref{eq: pure advection} on the real line with constant velocity $A\in\mathbb R$ and initial condition $u(x,0)=\init(x)$. We assume that $\init$ is a nonnegative probability density with finite second moment and that $\init\in L^2(\mathbb R)$. Thus the native solution manifold can be viewed in $L^2(\mathbb R)$, while the CDT is well defined in $L^2(dr)$.
    
    The solution is given explicitly by translation of the initial datum,
    \begin{equation*}
        u(x,t)=\init(x-At), \qquad t\in[0,T].
    \end{equation*}
    On the whole real line no boundary condition is imposed; equivalently, for sufficiently decaying data, the flux contribution at infinity vanishes. Then the associated native solution manifold is
    \begin{equation*}
        \mathcal{M}_{[0,T]}
        :=
        \{\,\init(\cdot-At): \, t\in[0,T] \, \}
        \subset L^2(\mathbb R).
    \end{equation*}
    
    Let $\widehat{\init}$ denote the CDT of $\init$ with respect to the fixed
    reference density $r$. By the translation equivariance property of the CDT
    (Corollary~\ref{cor:cdt-translation-dilation}), we have
    \begin{equation*}
        \widehat{\init(\cdot-At)}
        =
        \widehat{\init}+At,
        \qquad t\in[0,T].
    \end{equation*}
    Consequently, the transformed solution manifold is the affine line segment
    \begin{equation}
    \label{eq:transf-manifold-transport}
        \widehat{\mathcal M}_{[0,T]}
        =
        \big\{\,\widehat{\init}+t\,A\mathbf 1: \, t\in[0,T]\,\big\}
        \subset
        \widehat{\init}+\operatorname{span}\{\mathbf 1\}
        \subset
        \operatorname{span}\{\widehat{\init},\mathbf 1\}.
    \end{equation}
    
    Thus, while the family of translated copies of a nontrivial profile is not,
    in general, contained in any finite-dimensional linear subspace of
    $L^2(\mathbb R)$, its CDT image is contained in an at most two-dimensional
    linear subspace of $L^2(dr)$. 

\subsection{Beyond Exact Dimensionality Reduction: Kolmogorov Width and SVD}
\label{sec:kolmogorov}

The examples in the previous section are special cases in which the
transformed solution manifold is contained in a low-dimensional linear space
whose dimension can be computed exactly. This exact finite-dimensional
structure does not hold in general. We therefore need tools to quantify how
well a solution manifold can be approximated by low-dimensional linear
spaces, and practical methods for constructing such approximating spaces
from snapshot data.

Following standard practice in reduced order modeling, we combine two
complementary diagnostics. Kolmogorov $n$-widths quantify the best achievable
worst-case error of $n$-dimensional linear approximation
\cite{pinkus2012n,algoritmy,unger2019kolmogorov}, while POD/SVD singular
value decay and snapshot reconstruction errors provide computable measures
of data compressibility and approximation accuracy
\cite{berkooz1993proper,benner2015survey,rowley2017model}.

\paragraph{Kolmogorov Width.}
The Kolmogorov $n$-width of a set measures the smallest possible worst-case
error when approximating that set by an $n$-dimensional linear subspace.
Although this notion is defined through an optimal linear subspace, such a
space is generally not available explicitly and may not be constructible in
practice. Nevertheless, upper bounds on Kolmogorov widths provide useful
information about whether a solution manifold is amenable to low-dimensional
linear approximation.

\begin{definition}[Kolmogorov $n$-width]
Let $\mathcal H$ be a Hilbert space. For a subset
$\mathcal S\subset \mathcal H$, the Kolmogorov $n$-width of $\mathcal S$ in
$\mathcal H$ is defined by
\begin{equation*}
    d_n(\mathcal S;\mathcal H)
    :=
    \inf_{\substack{V\subset \mathcal H\\ \dim V\le n}}
    \sup_{u\in\mathcal S}
    \underbrace{\inf_{h\in V}\|u-h\|_{\mathcal H}}_{\mathrm{dist}(u,V)} .
\end{equation*}
In particular, if $\mathcal S$ is contained in a linear subspace of
dimension $d$, then
\begin{equation*}
    d_n(\mathcal S;\mathcal H)=0
    \qquad\text{for all } n\ge d.
\end{equation*}
\end{definition}

\begin{remark}
With this definition, the dimensionality reduction of the solution manifold
of the linear transport equation \eqref{eq: pure advection} in CDT space can
be written as
\begin{equation}
\label{eq:kol-2-adv}
    d_n\big(\widehat{\mathcal M}_{[0,T]};L^2(dr)\big)=0
    \qquad\text{for every } n\ge2,
\end{equation}
where $\widehat{\mathcal M}_{[0,T]}$ is given by
\eqref{eq:transf-manifold-transport}.
\end{remark}

Thus, Kolmogorov widths quantify how well a set can be approximated by
low-dimensional linear spaces, which is a central question in reduced order
modeling. In this work, we focus on upper bounds for Kolmogorov widths of
solution manifolds after transformation by the CDT. The transformed solution
manifold, denoted by $\widehat{\mathcal M}_{[t_0,T]}$, can be viewed as the
image of a curve
\begin{equation*}
    t\mapsto \widehat u(\cdot,t)
\end{equation*}
in the Hilbert space $L^2(dr)$. We will use the following elementary
Hilbert-valued curve estimates.

\begin{lemma}[Kolmogorov-width bounds for Hilbert-valued curves]
\label{lem:curve-width}
Let $\mathcal H$ be a Hilbert space and let
$\gamma:[0,T]\to\mathcal H$ be a curve. Let $0\le t_0<T$ and set
$L:=T-t_0$.
\begin{enumerate}
\item  \label{item:1-curve} If $\gamma\in C^1([0,T];\mathcal H)$ and
\begin{equation*}
    M_1:=\sup_{t\in[t_0,T]}\|\gamma'(t)\|_{\mathcal H},
\end{equation*}
then
\begin{equation}
\label{eq:bound-M1}
    d_n\big(\gamma([t_0,T]);\mathcal H\big)
    \le
    \frac{L M_1}{2n}
    \qquad\text{for every } n\ge1.
\end{equation}

\item  \label{item:2-curve} If $\gamma\in C^2([0,T];\mathcal H)$ and
\begin{equation*}
    M_2:=\sup_{t\in[t_0,T]}\|\gamma''(t)\|_{\mathcal H},
\end{equation*}
then
\begin{equation}
\label{eq:bound-M2}
    d_n\big(\gamma([t_0,T]);\mathcal H\big)
    \le
    \frac{L^2 M_2}{8(n-1)^2}
    \qquad\text{for every } n\ge2.
\end{equation}
\end{enumerate}
\end{lemma}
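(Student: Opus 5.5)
Both bounds follow from explicit approximating subspaces built out of values of $\gamma$ (and, for the second, one derivative). In each case we use $\mathrm{dist}(\gamma(t),V)\le\|\gamma(t)-h\|$ for any $h\in V$. No optimal subspace is needed; it suffices to exhibit a good $V$ of the right dimension.

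\emph{Part 1.} Partition $[t_0,T]$ into $n$ subintervals $[s_{k-1},s_k]$ of length $L/n$, and let $m_k$ be the midpoint of the $k$-th one. Set $V:=\operatorname{span}\{\gamma(m_1),\dots,\gamma(m_n)\}$, so $\dim V\le n$. For $t\in[s_{k-1},s_k]$ we have $|t-m_k|\le L/(2n)$. By the fundamental theorem of calculus for $C^1$ Hilbert-valued curves (Bochner integral),
\begin{equation*}
\|\gamma(t)-\gamma(m_k)\|_{\mathcal H}\le \Big|\int_{m_k}^{t}\|\gamma'(s)\|_{\mathcal H}\,ds\Big|\le M_1\,\frac{L}{2n}.
\end{equation*}
Taking the supremum over $t$ gives \eqref{eq:bound-M1}.

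\emph{Part 2.} Use piecewise-linear interpolation instead. The obstacle is the dimension count: naive per-interval tangent lines cost two vectors per interval. The fix is to share the nodes between adjacent intervals. Take $n$ equispaced nodes $s_0=t_0<s_1<\dots<s_{n-1}=T$, which split $[t_0,T]$ into $n-1$ intervals of length $h=L/(n-1)$. Set $V:=\operatorname{span}\{\gamma(s_0),\dots,\gamma(s_{n-1})\}$, so $\dim V\le n$. For $t\in[s_{k-1},s_k]$, write $t=s_{k-1}+\theta h$ with $\theta\in[0,1]$, and approximate $\gamma(t)$ by the linear interpolant
\begin{equation*}
p(t):=(1-\theta)\gamma(s_{k-1})+\theta\gamma(s_k)\in V.
\end{equation*}

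The interpolation error needs care in the Hilbert setting, since there is no mean-value point. We Taylor-expand with integral remainder around $t$:
\begin{equation*}
\gamma(s)=\gamma(t)+(s-t)\gamma'(t)+\int_t^s (s-\tau)\gamma''(\tau)\,d\tau .
\end{equation*}
Form the convex combination with $s=s_{k-1}$ (weight $1-\theta$) and $s=s_k$ (weight $\theta$). The first-order terms cancel because $(1-\theta)(s_{k-1}-t)+\theta(s_k-t)=0$. Bounding each remainder by $M_2|s-t|^2/2$ then gives
\begin{equation*}
\|p(t)-\gamma(t)\|\le \frac{M_2}{2}\big[(1-\theta)\theta^2h^2+\theta(1-\theta)^2h^2\big]=\frac{M_2}{2}\,\theta(1-\theta)h^2\le \frac{M_2h^2}{8}.
\end{equation*}
Substituting $h=L/(n-1)$ yields \eqref{eq:bound-M2} for $n\ge2$.

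The only technical point is justifying these calculus identities for $C^1$ and $C^2$ curves in $\mathcal H$. This follows from standard Bochner integration: apply the scalar identities to $\langle\gamma(\cdot),v\rangle$ for each $v\in\mathcal H$ and take the supremum over $\|v\|\le1$. Equivalently, use $\|\int f\|\le\int\|f\|$.
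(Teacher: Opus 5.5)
Your proposal is correct and follows the paper's proof: in Part 1 you use the same midpoint (odd-node) subspace with a nearest-node estimate, and in Part 2 the same $n$ equispaced nodes with piecewise-linear interpolation. The only difference is cosmetic: you bound the interpolation error with a vector-valued Taylor expansion with integral remainder, in which the first-order terms cancel, whereas the paper applies the scalar interpolation remainder to $\ell\circ\gamma$ for unit functionals $\ell$ and takes the supremum. Both routes give $\tfrac12 (t-s_{k-1})(s_k-t)M_2\le M_2h^2/8$.
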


For completeness, a proof is provided in Appendix~\ref{app: vector-val}.
The preceding lemma should be viewed as a low-regularity instance of a
standard interpolation principle. If the Hilbert-valued curve $\gamma$ has
higher temporal regularity, then improved Kolmogorov-width estimates can be
obtained using higher-order piecewise polynomial interpolation instead of
piecewise linear interpolation.

\paragraph{SVD.}
In general, it is not possible to explicitly find the best linear subspace
realizing the infimum in the definition of the Kolmogorov width. This is
especially true in numerical settings, where the solution manifold is only
observed through a finite collection of snapshots. A standard practical
proxy is therefore the singular value decomposition of a snapshot matrix.

Let $\mathcal S$ be a family of functions sampled on a spatial grid
$\{x_i\}_{i=1}^M$ and a temporal grid $\{t_j\}_{j=1}^N$. We denote the
sampled states by $s(\cdot,t_j)$ and form the snapshot matrix
$\mathbb S\in\mathbb R^{M\times N}$ by
\begin{equation*}
    \mathbb S_{ij}:=s(x_i,t_j).
\end{equation*}
Thus the $j$-th column $s_j$ of $\mathbb S$ represents the discrete state
$s(\cdot,t_j)$. The POD approximation is obtained from the singular value
decomposition
$
    \mathbb S = W\Sigma V^T.
$
The columns of $W$ are the POD modes associated with the chosen
representation. Retaining the first $k$ modes gives the rank-$k$
approximation
$
    \mathbb S_k
    =
    W_k\Sigma_k V_k^T
    =
    W_k W_k^T \mathbb S,
$
in the Euclidean inner product. Equivalently, each snapshot is approximated
by
\begin{equation*}
    s(\cdot,t_j)
    \approx
    \sum_{\ell=1}^k a_\ell(t_j)w_\ell,
    \qquad
    a_\ell(t_j)=w_\ell^T s_j.
\end{equation*}

When comparing with continuous Hilbert-space norms, the SVD should be
interpreted with the quadrature weights associated with the relevant norm.
For native-space $L^2$ errors, this corresponds to weighting by the spatial
quadrature weights. In CDT space, the natural norm is $L^2(dr)$; equivalently,
when using quantile coordinates $p\in(0,1)$ on a uniform grid, the standard
Euclidean SVD approximates the desired $L^2$ norm. In weighted coordinates,
one may either compute a weighted SVD or absorb the square roots of the
quadrature weights into the snapshot matrix before applying the usual SVD.

The decay of the singular values provides a finite-dimensional diagnostic
for the linear compressibility of the sampled manifold. Rapid singular-value
decay indicates that few modes are sufficient to approximate the snapshots
well in the discrete norm associated with the snapshot matrix, whereas slow
decay signals poor linear compressibility. Throughout the paper, a
\emph{mode} refers to a left singular vector of the relevant snapshot matrix.

\begin{remark}
The Kolmogorov-width estimates and the numerical study of singular-value
decay are complementary. Kolmogorov widths quantify the best possible linear
approximation of a solution manifold in a prescribed Hilbert-space norm,
whereas singular values measure the low-rank structure observed in a finite
snapshot matrix and in the corresponding discrete norm. Since the optimal
subspace realizing, or nearly realizing, the Kolmogorov width is typically
not known explicitly, POD provides a concrete data-driven procedure for
constructing an approximating subspace from available snapshots.
Appendix~\ref{sec: kolmogorov width and SVD} briefly recalls the connection
between Kolmogorov widths and singular-value decay.
\end{remark}

\subsection{The CDT-POD Method and the Structure of Numerical Examples}
\label{sec:CDT-POD-method}

\paragraph{CDT-POD method.}
We now describe the numerical procedure used to evaluate dimension reduction
in CDT space. We first compute a high-fidelity solution of the PDE at time
instances $t_1,\ldots,t_N$. The corresponding native snapshot matrix is
\begin{equation}
\label{eq:U}
    \mathbb U
    =
    \begin{bmatrix}
        u(\cdot,t_1) & u(\cdot,t_2) & \cdots & u(\cdot,t_N)
    \end{bmatrix}.
\end{equation}
We then apply the CDT to each snapshot, using a fixed reference density $r$,
and form the transformed snapshot matrix
\begin{equation}
\label{eq:hat-U}
    \widehat{\mathbb U}
    =
    \begin{bmatrix}
        \widehat{u}(\cdot,t_1) &
        \widehat{u}(\cdot,t_2) &
        \cdots &
        \widehat{u}(\cdot,t_N)
    \end{bmatrix}.
\end{equation}

We compute low-rank approximations in both coordinate systems. In native
variables, $\mathbb U$ is approximated by its rank-$k$ POD/SVD truncation,
denoted by $\mathbb U_k$. The native POD approximation at time $t_j$ is the
$j$-th column of $\mathbb U_k$, denoted by
$\widetilde u_E(\cdot,t_j)$. In transformed coordinates,
$\widehat{\mathbb U}$ is approximated by its rank-$k$ POD/SVD truncation,
denoted by $\widehat{\mathbb U}_k$. The $j$-th column of
$\widehat{\mathbb U}_k$ gives an approximate CDT map, denoted by
$\widetilde{\widehat{u}}(\cdot,t_j)$. Applying the inverse CDT column-wise,
whenever the reduced map is a valid monotone transport map, gives the
corresponding physical-space approximation
$\widetilde u_T(\cdot,t_j)$.

Thus, $\widetilde u_E$ is obtained by reducing directly in Eulerian
coordinates, whereas $\widetilde u_T$ is obtained by transforming the data,
reducing in CDT coordinates, and mapping the reduced representation back to
physical space. This procedure is summarized in
Algorithm~\ref{alg:cdt-pod} and illustrated in Figure~\ref{fig: CDT_pipeline}.

        \begin{figure}[!ht] 
        \centering
        \resizebox{\textwidth}{!}{
            \begin{tikzpicture}
            \node (top){
            \begin{tikzpicture}[>=Stealth, node distance=1.5cm, every node/.style={font=\small}]
    
                \draw[->, thick] (3,0) -- (9,0);
                
                \node (Input) at (3,0) [circle, draw, fill=blue!60, minimum size=1.5cm, inner sep=0pt, anchor = center] {\textbf{Input}};
                
                \node[draw, rectangle, fill=blue!20, minimum width=2cm, minimum height=1cm, anchor=center] (B1) at (6,0) {Model Reduction};
    
                \node (Output) at (9,0) [circle, draw, fill=blue!60, minimum size=1.5cm, inner sep=0pt, anchor = center] {\textbf{Output}};

                \draw[->, thick] (Input.east) -- (B1.west);
                
                \coordinate (I0) at ($(Input.east)!0.5!(B1.west)$);
                \coordinate (I3) at ($(B1.east)!0.5!(Output.west)$);
                
                \draw[<-, thick] (I0) -- ++(0,1) node[above] {$u(x,t)$};
                \draw[<-, thick] (I3) -- ++(0,1) node[above] {$\widetilde{u}_E(x,t)$};
    
                \draw[->,thick] (B1.east) -- (Output.west);
    
            \end{tikzpicture}};
    
            \node[below=0pt of top, anchor=north] (bottom){            
            \begin{tikzpicture}[>=Stealth, node distance=1.5cm, every node/.style={font=\small}]
    
                \draw[->, thick] (0,0) -- (12,0);
                
                \node (Input) at (0,0) [circle, draw, fill=blue!60, minimum size=1.5cm, inner sep=0pt, anchor = center] {\textbf{Input}};
                
                \node[draw, ellipse, fill=blue!20, minimum width=2cm, minimum height=1cm, anchor=center] (B1) at (3,0) {CDT};
                \node[draw, rectangle, fill=blue!20, minimum width=2cm, minimum height=1cm, anchor=center] (B2) at (6,0) {Model Reduction};
                \node[draw, ellipse, fill=blue!20, minimum width=2cm, minimum height=1cm, anchor=center] (B3) at (9,0) {iCDT};
    
                \node (Output) at (12,0) [circle, draw, fill=blue!60, minimum size=1.5cm, inner sep=0pt, anchor = center] {\textbf{Output}};

                \draw[->, thick] (Input.east) -- (B1.west);
                
                \coordinate (I0) at ($(Input.east)!0.5!(B1.west)$);
                \coordinate (I1) at ($(B1.east)!0.5!(B2.west)$);
                \coordinate (I2) at ($(B2.east)!0.5!(B3.west)$);
                \coordinate (I3) at ($(B3.east)!0.5!(Output.west)$);
                
                \draw[<-, thick] (I0) -- ++(0,-1) node[below] {$u(x,t)$};
                \draw[<-, thick] (I1) -- ++(0,-1) node[below] {$\widehat{u}(\xi,t)$};
                \draw[<-, thick] (I2) -- ++(0,-1) node[below] {$\widetilde{\widehat{u}}(\xi,t)$};
                \draw[<-, thick] (I3) -- ++(0,-1) node[below] {$\widetilde{u}_T(x,t)$};
    
                \draw[->,thick] (B3.east) -- (Output.west);
    
            \end{tikzpicture}};
            \end{tikzpicture}}
            \vspace{-0.4in}
            \caption{Comparison between the standard model reduction (top) and the CDT-based model reduction (bottom) pipelines.}
            \label{fig: CDT_pipeline}
        \end{figure}

The comparison between these two pipelines must be interpreted with care.
The SVD truncation of $\mathbb U$ gives the best rank-$k$ approximation of
the native snapshot matrix in the discrete norm used to compute the SVD.
In contrast, the CDT-based approximation is not constructed to minimize the
native-space reconstruction error directly. Instead, the low-rank
approximation is computed in CDT coordinates, where the CDT isometry
\eqref{eq:cdt-isometry} identifies the $L^2(dr)$ error with a
one-dimensional Wasserstein error between snapshots. Singular-value decay of
$\widehat{\mathbb U}$ should therefore be interpreted as evidence of
low-dimensional structure in transform space, not as a direct native-space
error estimate after applying the inverse transform.

When assessing reconstruction accuracy in physical variables, we compare
the high-fidelity solution with both the native POD reconstruction
$\widetilde u_E$ and the CDT-based reconstruction $\widetilde u_T$ after
the inverse transform has been applied. This puts both approximations in the
same native coordinate system. The numerical examples below therefore report
both global reconstruction errors in physical variables and time-dependent,
or column-wise, errors along the evolution.

There is one further asymmetry between the two approaches. The CDT is
defined for nonnegative densities with fixed mass, whereas a linear POD
approximation in native variables may introduce negative values or change
the total mass. Therefore, it is not always meaningful to transform the
native POD approximation $\widetilde u_E$ and measure its error in CDT space.
On the other hand, a linear truncation in CDT space need not preserve the
monotonicity required of a valid transport map. Consequently, the inverse
CDT step is valid directly only when the reduced CDT approximation remains
monotone; otherwise one must apply a monotone rearrangement, a projection, or
a constrained reduced approximation. This limitation is important in
interpreting the numerical results and motivates monotonicity-aware CDT-ROM
methods.

\begin{algorithm}[H]
\caption{CDT-POD}
\label{alg:cdt-pod}
\begin{algorithmic}[1]
\Require Snapshots $\{u(\cdot,t_j)\}_{j=1}^N$ with common unit mass
\Require Reference density $r$ with the same mass
\Require Number of modes $k$

\State Form the native snapshot matrix
\[
    \mathbb U
    =
    \big[u(\cdot,t_1)\ \cdots\ u(\cdot,t_N)\big].
\]

\For{$j=1,\ldots,N$}
    \State Compute the CDT snapshot
    \[
        \widehat u(\cdot,t_j)
        =
        F_{u(\cdot,t_j)}^\dagger\circ F_r .
    \]
\EndFor

\State Form the transformed snapshot matrix
\[
    \widehat{\mathbb U}
    =
    \big[\widehat u(\cdot,t_1)\ \cdots\ \widehat u(\cdot,t_N)\big].
\]

\State Compute a rank-$k$ POD approximation in CDT space,
\[
    \widehat{\mathbb U}_k
    =
    W_k W_k^\top \widehat{\mathbb U},
\]
or use the corresponding weighted projection when quadrature weights are
included.

\For{$j=1,\ldots,N$}
    \State Let
    \[
        \widetilde{\widehat u}(\cdot,t_j)
        :=
        (\widehat{\mathbb U}_k)_{:,j}.
    \]
    \State If $\widetilde{\widehat u}(\cdot,t_j)$ is monotone nondecreasing,
    reconstruct
    \[
        \widetilde u_T(\cdot,t_j)
        =
        \operatorname{iCDT}
        \big(\widetilde{\widehat u}(\cdot,t_j)\big).
    \]
    \State Otherwise, apply a monotone projection or rearrangement before
    applying the inverse CDT.
\EndFor

\State \Return $\{\widetilde u_T(\cdot,t_j)\}_{j=1}^N$
\end{algorithmic}
\end{algorithm}

\paragraph{Structure of numerical examples.}\label{par: numerical_examples_structure}Taking all these into account, in Sections~\ref{sec: numerics hyperbolic case} and~\ref{sec: num adv-diff}, we use a common set of plots to compare the native and CDT-based reduced approximations for different equations. The first group of figures, Figures~\ref{fig: burgers_three_rows}, \ref{fig: traffic_three_rows}, \ref{fig: buc_lev_three_rows}, and~\ref{fig: adv_diff_three_rows}, focuses on the solution of one problem each with fixed parameters and initial condition. Each figure is organized as follows:
\begin{enumerate}[(a)]
    \item
        We compare the high-fidelity solution with the native and CDT-based rank-$k$ reconstructions at selected times. These plots show where the approximation error occurs in physical space and help distinguish errors related to displacement, diffusion, shocks, rarefactions, or loss of amplitude.

    \item
        We report time-dependent, or columnwise, reconstruction errors for fixed ranks, typically $k=2$ and $k=5$. These curves show how the approximation quality changes along the evolution and help identify whether the error increases near dynamically important events, such as shock formation, rarefaction regions, or strong transport.

    \item
        We compare the singular value decay of $\mathbb U$ and $\widehat{\mathbb U}$, together with the global native-space reconstruction error as a function of the number of retained modes. The singular value decay indicates how low-dimensional the snapshot matrix appears in each coordinate system, while the reconstruction error measures the quality of the recovered solution in physical variables. These plots should be read together: faster singular value decay in $\widehat{\mathbb U}$ indicates better low-rank structure in CDT space, but it does not necessarily imply the same decay for the physical-space reconstruction error, since the CDT-based approximation must be mapped back through the inverse transform. For this reason, we also report the transform/inverse-transform error floor, obtained by applying the CDT followed by its inverse without rank truncation. This gives a reference level for the smallest native-space error that can be expected from the CDT-ROM pipeline at the given discretization.
\end{enumerate}

The second group of figures, Figures~\ref{fig: burgers_more_transport}, \ref{fig: traffic_more_transport}, \ref{fig: buc_lev_more_transport}, and~\ref{fig: adv_diff_more_transport}, repeats the comparison across different parameters and initial conditions. These experiments are designed to vary the strength of the transport effects, for example by increasing characteristic speeds or considering more displacement-dominated regimes. For each regime, we compare:
\begin{enumerate}[(a)]
    \item
        The high-fidelity solution with the native and CDT-based rank-$k$ reconstructions at representative times.

    \item
        The corresponding singular value decays and rank-dependent native-space reconstruction errors.
\end{enumerate}
This allows us to assess how the relative performance of the native and CDT-based approximations changes as the dynamics become more transport-dominated.

\section{Conservative Equations in CDT Space}\label{sec: theory for conservative eq in general}

Let $I\subseteq\mathbb R$ be an interval (possibly unbounded) with endpoints $a,b$ with $-\infty \leq a<b \leq \infty$. Throughout this section, we assume that $u$ is a nonnegative mass-preserving solution of \eqref{eq: IBVP} on $I\times[0,T]$. Thus, for each $t\in[0,T]$, the snapshot $u(\cdot,t)$ has the same total mass as the reference density $r$. When $I$ is unbounded, we also assume finite second moment whenever the CDT is viewed as an element of $L^2(dr)$.

We begin in Section~\ref{sec: lagrangian form} with formal derivations of
semi-Lagrangian and Lagrangian representations of scalar conservative PDEs
in CDT coordinates. Here, ``Lagrangian'' refers to equations written only in
terms of the mass label, or transformed coordinate, and the CDT map. We then
develop the technical preliminaries in Sections~\ref{sec: reg theory}
and~\ref{sec: extra reg theory}, which will be used in
Sections~\ref{sec: kolm hyp} and~\ref{sec: kol adr} to establish bounds on
the Kolmogorov $n$-width of
$\widehat{\mathcal M}_{[t_0,T]}$ in $L^2(dr)$.

   \subsection{Formal Derivation of Lagrangian and Semi-Lagrangian Forms}\label{sec: lagrangian form}

The goal of this section is to derive formal evolution equations for the
CDT map $\widehat{u}$. We first work with a general reference density $r$.
For this formal calculation, assume that $u$, $r$, and $\widehat{u}$ are
smooth enough, that $r>0$, and that $\widehat{u}_\xi>0$.

By definition of the CDT,
\begin{equation}
\label{eq:hat-equivalent-r-general}
    \int_a^{\widehat{u}(\xi,t)} u(x,t)\,dx
    =
    F_r(\xi).
\end{equation}
Differentiating \eqref{eq:hat-equivalent-r-general} with respect to $\xi$
and rearranging gives
\begin{equation}
    \label{eq:u-in-cdt-space-r-general-ratio}
    u(\widehat{u}(\xi,t),t)=\frac{r(\xi)}{\widehat{u}_{\xi}(\xi,t)}.
\end{equation}
Differentiating once more with respect to $\xi$ yields
\begin{equation}
\label{eq:ux-in-cdt-space-r-general}
    u_x(\widehat{u}(\xi,t),t)
    =
    \frac{
        r'(\xi)\widehat{u}_\xi(\xi,t)
        -
        r(\xi)\widehat{u}_{\xi\xi}(\xi,t)
    }{
        \widehat{u}_\xi(\xi,t)^3
    }.
\end{equation}
Higher Eulerian derivatives of $u$ evaluated at $x=\widehat u(\xi,t)$ can be
obtained similarly by repeated differentiation with respect to $\xi$.

Next, differentiating \eqref{eq:hat-equivalent-r-general} with respect to
$t$ gives
\begin{equation}
\label{eq:integral-dyn}
    \widehat u_t(\xi,t)
    =
    -\frac{1}{u(\widehat u(\xi,t),t)}
    \int_a^{\widehat u(\xi,t)}u_t(x,t)\,dx .
\end{equation}
Since the equation has conservative form
\begin{equation*}
    u_t+\partial_x f[u]=0,
\end{equation*}
we obtain
\begin{equation*}
    \int_a^{\widehat u(\xi,t)}u_t(x,t)\,dx
    =
    - f[u](\widehat u(\xi,t),t)+f[u](a,t).
\end{equation*}
Assuming zero flux at the lower endpoint, or the corresponding decay
condition if $a = -\infty$, this becomes
\begin{equation}
\label{eq:semilag}
    \widehat u_t(\xi,t)
    =
    \frac{f[u](\widehat u(\xi,t),t)}
    {u(\widehat u(\xi,t),t)}.
\end{equation}
Using \eqref{eq:u-in-cdt-space-r-general-ratio} and
\eqref{eq:ux-in-cdt-space-r-general}, this can be written formally as
\begin{equation}
\label{eq:ut-hat-general-r}
\begin{aligned}
    \widehat u_t(\xi,t)
    &=
    \frac{\widehat u_\xi(\xi,t)}{r(\xi)}
    f\left(
        \frac{r(\xi)}{\widehat u_\xi(\xi,t)},
        \frac{
            r'(\xi)\widehat u_\xi(\xi,t)
            -
            r(\xi)\widehat u_{\xi\xi}(\xi,t)
        }{
            \widehat u_\xi(\xi,t)^3
        },
        \ldots
    \right).
\end{aligned}
\end{equation}

Equivalently, whenever the flux can be written in the form
\begin{equation*}
    f[u]=u\,v,
\end{equation*}
with $v$ interpreted as an Eulerian velocity field, the CDT map satisfies
the semi-Lagrangian equation
\begin{equation}
\label{eq:ut-hat-v-form}
    \widehat u_t(\xi,t)
    =
    v(\widehat u(\xi,t),t).
\end{equation}
Thus, for each fixed mass label $\xi$, the point
$x=\widehat u(\xi,t)$ moves with the Eulerian velocity $v(x,t)$. This is
semi-Lagrangian because the unknown $\widehat u$ is Lagrangian, while the
velocity field is still described in Eulerian coordinates and evaluated
along the CDT map.

The fully transformed formulas are simplest in \textit{quantile coordinates $p$}, which means taking the reference $r:(0,1)\to\mathbb{R}$ to be the constant function $r\equiv 1$, and defining $p:=F_r(\xi)= \xi$. In these coordinates, the CDT of a signal is just its quantile function, and formula \eqref{eq:ut-hat-general-r} simplifies to 
\begin{equation}\label{eq:ut-hat-quantile-coordinates}
    \widehat{u}_t(p,t) =\widehat{u}_p(p,t)f(1/\widehat{u}_p(p,t), -\widehat{u}_{pp}(p,t)/\widehat{u}^3_p(p,t),\dots),
\end{equation}
since $r\equiv 1$ and $r'\equiv 0$. Notice that formula \eqref{eq:ut-hat-v-form} is independent of the chosen reference so it remains unchanged in quantile coordinates.

Using \eqref{eq:ut-hat-general-r}, \eqref{eq:ut-hat-v-form} and \eqref{eq:ut-hat-quantile-coordinates}, Table~\ref{tab:equations-in-CDT-space} presents the equations in Table~\ref{tab:scalar_conservation_laws} written in Eulerian,  \textit{Hybrid} Eulerian-CDT, and  fully CDT coordinates. 

\begin{table}[H]
\centering
\caption{Conservative equations in Eulerian, Hybrid, and CDT coordinates.
The final column is written in quantile coordinates using \eqref{eq:ut-hat-quantile-coordinates}.}
\vspace{-0.1in}
\begin{adjustbox}{max width=\linewidth}
\begin{tabular}{llll}
\toprule
\textbf{Equation}
&
\textbf{Native}
&
\textbf{Hybrid}
&
\textbf{CDT} \\
&
\textbf{(Eulerian)}
&
\textbf{(Semi-Lagrangian)}
&
\textbf{(Lagrangian)} \\
\midrule

Advection--Diffusion
&
$u_t+(Au-Du_x)_x=0$
&
$\widehat u_t(\xi,t)
 =
 \left.\dfrac{Au-Du_x}{u}\right|_{(\widehat u(\xi,t),t)}$
&
$\widehat u_t
 =
 A+D\dfrac{\widehat u_{pp}}{\widehat u_p^2}$
\\[0.9em]

Burgers
&
$u_t+\left(\dfrac{u^2}{2}\right)_x=0$
&
$\widehat u_t(\xi,t)
 =
 \left.\dfrac{u}{2}\right|_{(\widehat u(\xi,t),t)}$
&
$\widehat u_t
 =
 \dfrac{1}{2\widehat u_p}$
\\[0.9em]

Traffic
&
$u_t+\bigl(u(1-u)\bigr)_x=0$
&
$\widehat u_t(\xi,t)
 =
 \left.(1-u)\right|_{(\widehat u(\xi,t),t)}$
&
$\widehat u_t
 =
 \dfrac{\widehat u_p-1}{\widehat u_p}$
\\[0.9em]

Buckley--Leverett
&
$u_t+
\left(\dfrac{u^2}{u^2+(1-u)^2}\right)_x=0$
&
$\widehat u_t(\xi,t)
 =
 \left.\dfrac{u}{u^2+(1-u)^2}\right|_{(\widehat u(\xi,t),t)}$
&
$\widehat u_t
 =
 \dfrac{\widehat u_p}{1+(\widehat u_p-1)^2}$
\\

\bottomrule
\end{tabular}
\end{adjustbox}
\label{tab:equations-in-CDT-space}
\end{table}

Differentiating \eqref{eq:ut-hat-v-form} in time, we formally obtain
\begin{equation*}
\label{eq:utt-hat-a}
    \widehat u_{tt}(\xi,t)
    =
    \mathfrak a(\widehat u(\xi,t),t),
    \qquad
    \mathfrak a(x,t):=v_t(x,t)+v_x(x,t)v(x,t).
\end{equation*}
Finally, let us formally write the $L^2(dr)$-norms of $\widehat{u}(\cdot,t)$, $\widehat{u}_t(\cdot,t)$, and $\widehat{u}_{tt}(\cdot,t)$. Notice that the pushforward identity \eqref{eq: push-forward id} gives,
for suitable Borel functions $\phi$,
\begin{equation*}
    \int \phi(\widehat u(\xi,t))\,r(\xi)\,d\xi
    =
    \int \phi(x)\,u(x,t)\,dx .
\end{equation*}
Taking $\phi(x)=|x|^2$, $\phi(x)=|v(x,t)|^2$, and
$\phi(x)=|\mathfrak a(x,t)|^2$, respectively, one obtains
\begin{align}
    \|\widehat u(\cdot,t)\|_{L^2(dr)}^2
    &=
    \int |\widehat u(\xi,t)|^2 \, r(\xi)\,d\xi
    =
    \int_I x^2 \, u(x,t)\,dx,
    \label{eq:norm-hat-u}
    \\
    \|\widehat u_t(\cdot,t)\|_{L^2(dr)}^2
    &=
    \int |v(\widehat u(\xi,t),t)|^2 \, r(\xi)\,d\xi
    =
    \int_I |v(x,t)|^2 \, u(x,t)\,dx,
    \label{eq:norm-v}
    \\
    \|\widehat u_{tt}(\cdot,t)\|_{L^2(dr)}^2
    &=
    \int |\mathfrak a(\widehat u(\xi,t),t)|^2 \, r(\xi)\,d\xi
    =
    \int_I |\mathfrak a(x,t)|^2 \, u(x,t)\,dx.
    \label{eq:norm-a}
\end{align}
The first identity is finite when $u(\cdot,t)$ has finite second moment. The second and third identities require additional integrability of the Eulerian velocity $v$ and acceleration $\mathfrak a$ with respect to the measure $u(x,t)\,dx$. The following sections impose hypotheses ensuring precisely these properties.

\subsection{General Regularity Theory}\label{sec: reg theory}

\begin{lemma}
\label{lem: only cont}
Let $\gamma:[0,T]\to L^2(dr)$, and assume that there exists a representative $\widetilde\gamma:I\times[0,T]\to\mathbb R$
such that $\widetilde\gamma(\cdot,t)=\gamma(t)$ in $L^2(dr)$ for every
$t\in[0,T]$, and such that, for $r$-a.e. $\xi\in I$, the map
$t\mapsto \widetilde\gamma(\xi,t)$ is continuous on $[0,T]$. Let
$h:\mathbb R\times[0,T]\to\mathbb R$ be bounded and continuous. Define $H(t)(\xi):=h(\widetilde\gamma(\xi,t),t)$. Then $H\in C([0,T];L^2(dr))$.
\end{lemma}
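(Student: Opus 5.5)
The plan is a direct dominated convergence argument in $L^2(dr)$; recall $r$ is a probability density, so $r(\xi)\,d\xi$ is a finite measure. We first record that $H(t)$ is a well-defined element of $L^2(dr)$ for each $t$. Measurability of $\xi\mapsto h(\widetilde\gamma(\xi,t),t)$ follows because $\widetilde\gamma(\cdot,t)$ is measurable (it represents $\gamma(t)$) and $h(\cdot,t)$ is continuous. With $M:=\sup|h|<\infty$, we get $\|H(t)\|_{L^2(dr)}^2\le M^2\int r\,d\xi=M^2$. Changing $\widetilde\gamma(\cdot,t)$ on an $r$-null set changes $H(t)$ only on an $r$-null set. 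Hence $H(t)$ depends only on the class $\gamma(t)$, up to the choice of a representative that is continuous in $t$ for a.e. $\xi$; the statement fixes that representative.

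To prove continuity, fix $t\in[0,T]$ and a sequence $t_k\to t$ in $[0,T]$; sequential continuity suffices in the metric space $[0,T]$. Let $N\subset I$ be the $r$-null set off which $s\mapsto\widetilde\gamma(\xi,s)$ is continuous. For $\xi\notin N$ we have $\widetilde\gamma(\xi,t_k)\to\widetilde\gamma(\xi,t)$, so $(\widetilde\gamma(\xi,t_k),t_k)\to(\widetilde\gamma(\xi,t),t)$ in $\mathbb R\times[0,T]$. Joint continuity of $h$ then gives $H(t_k)(\xi)\to H(t)(\xi)$ for $r$-a.e. $\xi$.

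Next we bound $|H(t_k)(\xi)-H(t)(\xi)|^2\le 4M^2$. The constant $4M^2$ is integrable against $r\,d\xi$ because $r$ has unit mass. The dominated convergence theorem therefore yields $\|H(t_k)-H(t)\|_{L^2(dr)}^2=\int|H(t_k)-H(t)|^2r\,d\xi\to0$. Since $t$ and the sequence were arbitrary, $H\in C([0,T];L^2(dr))$.

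There is no serious obstacle here. The only points needing care are that the continuity hypothesis must be used jointly in $(x,t)$, since $h$ depends on $t$ as well, and that boundedness of $h$ together with finiteness of the measure $r\,d\xi$ supplies the dominating function. Neither step uses any regularity of $\gamma$ in $L^2(dr)$ itself.
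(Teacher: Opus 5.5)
Your proposal is correct and follows essentially the same argument as the paper. You get pointwise $r$-a.e. convergence from the continuity of $t\mapsto\widetilde\gamma(\xi,t)$ and the joint continuity of $h$, dominate by the constant $4M^2$ (integrable since $r\,d\xi$ is a probability measure), and conclude by dominated convergence, while your measurability check and sequential formulation simply make explicit details the paper leaves implicit.
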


\begin{proof}
Fix $s\in[0,T]$. For $r$-a.e. $\xi$, the map
$t\mapsto \widetilde\gamma(\xi,t)$ is continuous. Hence
\begin{equation*}
    (\widetilde\gamma(\xi,t),t)
    \to
    (\widetilde\gamma(\xi,s),s)
    \qquad\text{as }t\to s.
\end{equation*}
Since $h$ is continuous on $\mathbb R\times[0,T]$, we obtain
\begin{equation*}
    h(\widetilde\gamma(\xi,t),t)
    \to
    h(\widetilde\gamma(\xi,s),s)
    \qquad\text{as }t\to s
\end{equation*}
for $r$-a.e. $\xi$. Moreover, if $M:=\|h\|_{L^\infty(\mathbb R\times[0,T])}$,
then $|h(\widetilde\gamma(\xi,t),t)
      -h(\widetilde\gamma(\xi,s),s)|^2
    \le 4M^2$.
Since $dr$ is a probability measure, $4M^2$ is $dr$-integrable. Therefore,
by the dominated convergence theorem,
\begin{equation*}
    \|H(t)-H(s)\|_{L^2(dr)}^2
    =
    \int_I
    |h(\widetilde\gamma(\xi,t),t)
      -h(\widetilde\gamma(\xi,s),s)|^2\,dr(\xi)
    \to 0
    \qquad\text{as }t\to s.
\end{equation*}
Thus $H\in C([0,T];L^2(dr))$.
\end{proof}

The following result is an application of the Bochner version of the fundamental theorem of calculus. 
For its proof, we refer the reader to Appendix \ref{sec: scalar conservative laws}.
We also refer to the textbook \cite{hytonen2016analysis}.

\begin{lemma}\label{lem: Bochner}
    Given the curve $\gamma \in C([0,T]; L^2(dr))$, let  $\Gamma(t) := \int_0^t \gamma(s) \, ds$, where the integral is in the Bochner sense. Then, $\Gamma \in C^1([0,T]; L^2(dr))$ and $\Gamma'(t) = \gamma(t)$ as a derivative in the $L^2(dr)$ sense. 
\end{lemma}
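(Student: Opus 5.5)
The plan is to prove the Bochner fundamental theorem of calculus directly from the definition of the derivative in $L^2(dr)$, using only the uniform continuity of $\gamma$ on the compact interval $[0,T]$ and the triangle inequality for Bochner integrals. The argument has three steps: well-definedness of $\Gamma$, differentiability with $\Gamma'=\gamma$, and continuity of that derivative.

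\textbf{Step 1: $\Gamma$ is well defined.} A continuous map $\gamma:[0,T]\to L^2(dr)$ is strongly measurable, since its range $\gamma([0,T])$ is compact and hence separable (Pettis). It is also bounded, so
\begin{equation*}
\int_0^T\|\gamma(s)\|_{L^2(dr)}\,ds\le T\sup_{s\in[0,T]}\|\gamma(s)\|_{L^2(dr)}<\infty .
\end{equation*}
Therefore $\gamma$ is Bochner integrable on every subinterval $[0,t]$, and $\Gamma(t)$ is a well-defined element of $L^2(dr)$. We will use the two standard properties of the Bochner integral:
\begin{itemize}
\item additivity over subintervals, $\Gamma(t+h)-\Gamma(t)=\int_t^{t+h}\gamma(s)\,ds$;
\item the norm bound $\bigl\|\int_J \phi\,ds\bigr\|_{L^2(dr)}\le\int_J\|\phi\|_{L^2(dr)}\,ds$.
\end{itemize}

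\textbf{Step 2: differentiability.} Fix $t\in[0,T]$ and $h\neq0$ with $t+h\in[0,T]$; this gives one-sided derivatives at the endpoints. Since $\int_t^{t+h}\gamma(t)\,ds=h\,\gamma(t)$, we can write
\begin{equation*}
\frac{\Gamma(t+h)-\Gamma(t)}{h}-\gamma(t)=\frac1h\int_t^{t+h}\bigl(\gamma(s)-\gamma(t)\bigr)\,ds ,
\end{equation*}
with the usual orientation convention when $h<0$. By the norm bound, the $L^2(dr)$ norm of the right-hand side is at most $\sup_{|s-t|\le|h|}\|\gamma(s)-\gamma(t)\|_{L^2(dr)}$. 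This tends to $0$ as $h\to0$ by continuity of $\gamma$ at $t$. Hence $\Gamma'(t)=\gamma(t)$ in the $L^2(dr)$ sense.

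\textbf{Step 3: continuity of the derivative.} Since $\Gamma'=\gamma$ and $\gamma$ is continuous by hypothesis, $\Gamma\in C^1([0,T];L^2(dr))$. Continuity of $\Gamma$ itself also follows directly from $\|\Gamma(t)-\Gamma(s)\|_{L^2(dr)}\le |t-s|\sup_{[0,T]}\|\gamma\|_{L^2(dr)}$.

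The only point needing care is Step 1, namely the measurability and integrability that make the Bochner integral meaningful. This is settled by Pettis' theorem, or more simply by the fact that $L^2(dr)$ is separable, together with compactness of $[0,T]$, following \cite{hytonen2016analysis}. The remaining steps are the scalar proof carried out verbatim with absolute values replaced by norms.
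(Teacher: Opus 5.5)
Your proposal is correct and follows essentially the same route as the paper's appendix proof. Boundedness on the compact interval gives well-definedness of $\Gamma$ and the Lipschitz bound $\|\Gamma(t)-\Gamma(s)\|_{L^2(dr)}\le M|t-s|$. The quotient $\frac{1}{h}\int_t^{t+h}(\gamma(s)-\gamma(t))\,ds$ is controlled by the continuity of $\gamma$, with one-sided derivatives at the endpoints. Your explicit strong-measurability justification via Pettis' theorem simply spells out a point the paper takes for granted.
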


The following lemma is partially based on \cite[Theorem 4.4]{Santambrogio-OTAM}.

\begin{lemma}\label{lem:santambrogio-lagrangian-first-derivative}
    Let $u$ be a nonnegative unit-mass solution of $u_t+(uv)_x=0$
    on $I\times [0,T]$ with initial condition $\init$.
    Assume that $ \widehat{\init}\in L^2(dr),$
    where $\widehat{\init}$ is the CDT of $\init$ with respect to a given reference $r$.
    Assume that $v:I\times[0,T]\to\mathbb R$ satisfies the following hypotheses:
    \begin{enumerate}[(a)]
        \item $v(x,t)$  is measurable in $t$
         for each fixed $x$; and $v(\cdot,t):I\to \RR$ is Lipschitz continuous in $x$, uniformly in $t$, and uniformly bounded.
        
        \item The flow generated by $v$ leaves $I$ invariant.\footnote{This is obvious for $I=\RR$ and, for example, if $I=(a,b)$, this is guaranteed by the no-outflow condition $v(a,t)=v(b,t)=0$
        for a.e. $t\in[0,T]$.}
    \end{enumerate}
    Let $\Phi^{(t)}:I\to I$ denote the associated Carathéodory flow, namely
    \begin{equation}\label{eq:flow-definition}
    \frac{d}{dt}\Phi^{(t)}(x)=v(\Phi^{(t)}(x),t)
    \qquad\text{for a.e. }t\in(0,T),
    \qquad
    \Phi_0(x)=x.
    \end{equation}
    Then,
    
    \begin{enumerate}[(a)]
        \item $\widehat u(\cdot,t)=\Phi^{(t)}\circ\widehat \init$ $r\text{-a.e.}$;
        \item \label{item: u_hat abs cont rep} $r$-a.e. $\xi \in I$, the function $t\to \widehat u(\xi,t)$ is absolutely continuous.  
        \item $\widehat u\in W^{1,\infty}([0,T]; L^2(dr))$; 
        \item $\widehat u_t(\cdot,t) = v(\widehat u(\cdot,t),t)$, with $\partial_t$ in the $L^2(dr)$ sense for a.e. $t\in(0,T)$;
        \item $\widehat u\in C([0,T];L^2(dr))$.
    \end{enumerate}
\end{lemma}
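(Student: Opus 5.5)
The plan is to prove (a) first, since every later item follows from it by combining the flow's properties with the isometry and pushforward identities of Section~\ref{sec: CDT}.

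\textbf{Step 1: the flow.} Hypothesis (a) is a Carathéodory condition: $v$ is measurable in $t$, and Lipschitz in $x$ and bounded, uniformly in $t$. Hence \eqref{eq:flow-definition} has a unique absolutely continuous solution $t\mapsto\Phi^{(t)}(x)$ for every $x$. Hypothesis (b) keeps the flow inside $I$. Two consequences follow.
\begin{itemize}
\item Gronwall's inequality shows that $x\mapsto\Phi^{(t)}(x)$ is bi-Lipschitz.
\item Uniqueness of trajectories prevents two of them from crossing, so $\Phi^{(t)}$ is strictly increasing.
\end{itemize}

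\textbf{Step 2: the pushforward identity.} Here I invoke the superposition/uniqueness result for the continuity equation with Lipschitz velocity, \cite[Theorem 4.4]{Santambrogio-OTAM}. Its content is that the unique (weak) solution of $u_t+(uv)_x=0$ is $u(\cdot,t)\,dx=\Phi^{(t)}_\#(\init\,dx)$. Using \eqref{eq: cdt-in} for $\init$, this gives
\[
u(\cdot,t)\,dx=(\Phi^{(t)}\circ\widehat{\init})_\#(r\,d\xi).
\]

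\textbf{Step 3: item (a).} The map $\Phi^{(t)}\circ\widehat{\init}$ is nondecreasing, being a composition of nondecreasing maps. The monotone map pushing $r$ to $u(\cdot,t)$ is unique $r$-a.e., so $\widehat u(\cdot,t)=\Phi^{(t)}\circ\widehat{\init}$ $r$-a.e. I expect this step to be the main obstacle. One must check that Santambrogio's uniqueness hypotheses hold on a possibly bounded $I$ with merely measurable time dependence, and that the solution $u$ given in the statement is the same as the flow solution. I would handle this by working with the weak formulation and appealing to the uniqueness theorem, rather than rederiving it.

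\textbf{Step 4: items (b)–(e).} All of these use the representative $\widetilde\gamma(\xi,t):=\Phi^{(t)}(\widehat{\init}(\xi))$.
\begin{itemize}
\item \emph{Item (b).} For $r$-a.e. $\xi$, $\widehat{\init}(\xi)$ is finite, so $t\mapsto\widetilde\gamma(\xi,t)$ is a flow trajectory and hence absolutely continuous.
\item \emph{Key Lipschitz bound.} With $M:=\|v\|_\infty$, every trajectory satisfies
\[
|\widetilde\gamma(\xi,t)-\widetilde\gamma(\xi,s)|=\Big|\int_s^t v(\widetilde\gamma(\xi,\tau),\tau)\,d\tau\Big|\le M|t-s|.
\]
Integrating its square against $r$ gives $\|\widehat u(t)-\widehat u(s)\|_{L^2(dr)}\le M|t-s|$.
\item \emph{Item (e).} Continuity in $L^2(dr)$ follows at once from the Lipschitz bound.
\item \emph{Membership in $L^2(dr)$.} We have $|\widetilde\gamma(\xi,t)|\le|\widehat{\init}(\xi)|+MT$ and $\widehat{\init}\in L^2(dr)$, so $\widehat u$ lies in $L^\infty([0,T];L^2(dr))$. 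Together with the Lipschitz bound, this gives $\widehat u\in W^{1,\infty}([0,T];L^2(dr))$ once the derivative is identified, which is item (c).
\item \emph{Identifying the derivative.} Set $g(t)(\xi):=v(\widetilde\gamma(\xi,t),t)$, which satisfies $\|g(t)\|_{L^2(dr)}\le M$. Pointwise $\widetilde\gamma(\xi,t)=\widehat{\init}(\xi)+\int_0^t g(\tau)(\xi)\,d\tau$. The map $g$ is strongly measurable: $v$ is Carathéodory and $\widetilde\gamma$ is jointly measurable, and $L^2(dr)$ is separable. Fubini then identifies the pointwise integral with the Bochner integral, $\widehat u(t)=\widehat{\init}+\int_0^t g$ in $L^2(dr)$.
\item \emph{Item (d).} Lemma~\ref{lem: Bochner} cannot be used directly, because $g$ is only measurable in $t$ and not continuous. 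Instead, the Lebesgue differentiation theorem for Bochner integrals gives $\partial_t\widehat u=g(t)=v(\widehat u(\cdot,t),t)$ in $L^2(dr)$ for a.e. $t$. This is item (d), and it completes item (c).
\end{itemize}
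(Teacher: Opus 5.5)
Your proposal is correct and follows essentially the same route as the paper: the Carath\'eodory flow is order preserving by uniqueness/Gronwall, Santambrogio's uniqueness theorem for the continuity equation gives $u(\cdot,t)\,dx=(\Phi^{(t)}\circ\widehat{\init})_\#(r\,d\xi)$, uniqueness of the monotone map yields item (a), and the pointwise identity $\widehat u(\xi,t)=\widehat{\init}(\xi)+\int_0^t v(\widehat u(\xi,\tau),\tau)\,d\tau$ is upgraded to a Bochner identity in $L^2(dr)$, where the bound $\|v\|_\infty$ gives the Lipschitz, $W^{1,\infty}$ and continuity claims. Your explicit check of strong measurability of $t\mapsto v(\widehat u(\cdot,t),t)$ and your use of Lebesgue differentiation in place of Lemma~\ref{lem: Bochner} fill in details that the paper handles by citation, without changing the argument.
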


\begin{proof}
By the assumptions, the ordinary differential equation \eqref{eq:flow-definition} admits a unique global flow $\Phi^{(t)}:I\to I$. 
Because the problem is one-dimensional and $x\mapsto v(x,t)$ is Lipschitz, the flow is order preserving. Indeed, if $x<y$, then uniqueness of the ODE prevents the trajectories $t\mapsto\Phi^{(t)}(x)$ and $t\mapsto\Phi^{(t)}(y)$ from crossing. Equivalently, Gronwall's inequality gives
\begin{equation*}\label{eq:order-preserving-flow}
\Phi^{(t)}(y)-\Phi^{(t)}(x)\ge e^{-Lt}(y-x)>0 \qquad \text{(where $L$ is the Lipschitz constant of $v$)}.
\end{equation*}
In addition, the pushforward measure $(\Phi^{(t)})_{\#}(\init(x)dx)$
solves, in the weak sense, the same continuity equation $u_t+(uv)_x=0$ as $u(\cdot,t)$,
with the same initial datum $\init$. Indeed, for $t=0$, $(\Phi_0)_\#(\init(x)dx)=\init(x)dx$ since $\Phi_0$ is the identity map, and for every
$\varphi\in C_c^\infty(\RR)$,
\begin{align*}
\frac{d}{d t}\int_{\RR}\varphi(\Phi^{(t)}(x)) \, \init(x)d x
&=
\int_{\RR}\varphi'(\Phi^{(t)}(x))v(\Phi^{(t)}(x),t)\, \init(x) d x.
\end{align*}
Then,
\begin{equation*}
    (\Phi^{(t)})_\#\left((\widehat{\init})_\#(r(\xi)d\xi)\right)=(\Phi^{(t)})_\#(\init(x)dx)=u(x,t)dx,
\end{equation*}
where the first equality holds from the CDT's definition and the second identity is a consequence of \cite[Theorem 4.4]{Santambrogio-OTAM} which guarantees that continuity equation admits
a unique solution. Finally, 
by uniqueness of the monotone transport map in one dimension, it follows that
\begin{equation}\label{eq:uhat-flow-proof}
\widehat u(\cdot,t)=\Phi^{(t)}\circ\widehat{\init}
\qquad r\text{-a.e.}
\end{equation}
since, by definition of the CDT,  $\widehat u(\cdot,t)$ is monotonically increasing and satisfies \eqref{eq: push-forward id}.

Now, using \eqref{eq:flow-definition}, the above identity \eqref{eq:uhat-flow-proof} implies that, for  $r\text{-a.e.}$ $\xi$,
\begin{equation*}
    \partial_t\widehat{u}(\xi,t)=\frac{d}{dt}\Phi^{(t)}(\widehat{\init}(\xi))=v(\Phi^{(t)}(\widehat{\init}(\xi)),t)=v(\widehat{u}(\xi,t),t)  \quad \text{with } \widehat{u}(\xi,0)=\Phi_0(\widehat{\init}(\xi))=\widehat{\init}(\xi).
\end{equation*}
Since, for $r$-a.e. $\xi$, the curve
$t\mapsto \widehat u(\xi,t)=\Phi^{(t)}(\widehat\init(\xi))$
is absolutely continuous and satisfies
\begin{equation*}
\widehat u_t(\xi,t)
=
v(\widehat u(\xi,t),t)
\quad\text{for a.e. }t
\end{equation*}
(solutions of Carathéodory ODEs are absolutely continuous in time \cite{CoddingtonLevinson1955}), then 
the fundamental theorem of calculus for absolutely continuous functions gives,
for $0\le s\le t\le T$,
\begin{equation}\label{eq:pointwise-integral-curve}
\widehat u(\xi,t)-\widehat u(\xi,s)
=
\int_s^t v(\widehat u(\xi,\tau),\tau)\,d\tau
\qquad r\text{-a.e. }\xi.
\end{equation}
Let $\mathbf{v}(t):=v(\widehat u(\cdot,t),t)$.
By hypothesis,
let
$
M:=\operatorname*{ess\,sup}_{t\in[0,T]}\|v(\cdot,t)\|_{L^\infty(I)}<\infty$.
Then
\begin{equation*}
\|\mathbf{v}(t)\|_{L^2(dr)}
\le M\|\mathbf{1}\|_{L^2(dr)}
=M,
\end{equation*}
because $r$ is a probability density. Hence
$\mathbf{v}\in L^\infty([0,T];L^2(dr))\subset L^1([0,T];L^2(dr))$.
Equation \eqref{eq:pointwise-integral-curve} may therefore be interpreted as a Bochner integral identity in $L^2(dr)$:
\begin{equation}\label{eq:bochner-integral-first-proof}
\widehat u(\cdot,t)-\widehat u(\cdot,s)
=
\int_s^t \mathbf{v}(\tau)\,d\tau.
\end{equation}
In particular,
\begin{equation}\label{eq: boch}
    \widehat u(\cdot,t)
=
\widehat \init+\int_0^t \mathbf{v}(\tau)\,d\tau
\qquad\text{in }L^2(dr) 
\end{equation}
and so $\widehat u\in W^{1,\infty}([0,T];L^2(dr))\subset W^{1,1}([0,T];L^2(dr))$ with $\widehat u_t(\cdot,t)
=
\mathbf{v}(t)
=
v(\widehat u(\cdot,t),t)$
in $L^2(dr)$ for a.e. $t\in(0,T)$, where weak and almost everywhere derivatives coincide (see \cite[Proposition 2.5.9]{hytonen2016analysis}).
Moreover, from \eqref{eq:bochner-integral-first-proof}, for all $t,s\in [0,T]$,
\begin{equation*}
    \|\widehat u(\cdot,t)-\widehat u(\cdot,s)\|_{L^2(dr)}
=
\left\|\int_s^t \mathbf{v}(\tau)\,d\tau\right\|_{L^2(dr)}\leq \int_s^t \|\mathbf{v}(\tau)\|_{L^2(dr)}d\tau\leq M|t-s|,
\end{equation*}
i.e., the $L^2(dr)$-valued curve $t\to \widehat u(\cdot,t)$ is Lipschitz continuous, 
in particular $\widehat u\in C([0,T];L^2(dr))$.

\end{proof}

\begin{corollary}
    \label{lem: hyp v cont}
    If, in addition to the hypothesis from Lemma \ref{lem:santambrogio-lagrangian-first-derivative}, we have that $v\in C(I\times[0,T])$, then the map
\begin{equation*}
{\mathbf{v}}:[0,T]\to L^2(dr),
\qquad
\mathbf{v}(t):=v(\widehat u(\cdot,t),t),
\end{equation*}
belongs to $C([0,T];L^2(dr))$. Consequently, $\widehat u\in C^1([0,T];L^2(dr))$ and $\widehat u_t(\cdot,t)
=
v(\widehat u(\cdot,t),t)$ in $L^2(dr)$
for every $t\in[0,T]$. Moreover, $\partial_t \widehat u(\cdot, t)$ admits the continuous representative  $\mathbf{v}(t)(\xi) := v(\widehat u(\xi,t),t)$ $r$-a.e. $\xi \in I$.
\end{corollary}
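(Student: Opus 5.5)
The plan is to combine Lemma~\ref{lem: only cont} with the Bochner identity \eqref{eq: boch} from the proof of Lemma~\ref{lem:santambrogio-lagrangian-first-derivative}, and then apply Lemma~\ref{lem: Bochner}.

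\textbf{Step 1: continuity of $\mathbf v$.} Take the representative $\widetilde\gamma(\xi,t):=\Phi^{(t)}(\widehat{\init}(\xi))$ of $\widehat u(\cdot,t)$. By item~(a) of Lemma~\ref{lem:santambrogio-lagrangian-first-derivative}, this agrees $r$-a.e. with $\widehat u(\cdot,t)$ for every $t$. By item~(b), $t\mapsto\widetilde\gamma(\xi,t)$ is absolutely continuous, hence continuous, for $r$-a.e. $\xi$.

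We then apply Lemma~\ref{lem: only cont} with $h:=v$. Hypothesis~(a) makes $v$ uniformly bounded, and the additional assumption makes it continuous on $I\times[0,T]$.

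\textbf{Main obstacle.} Lemma~\ref{lem: only cont} asks for $h$ bounded and continuous on all of $\mathbb R\times[0,T]$, while $v$ is only given on $I\times[0,T]$. I would handle this in one of two ways:
\begin{itemize}
\item note that the argument of Lemma~\ref{lem: only cont} only evaluates $h$ along $\widetilde\gamma$, which takes values in $I$ because the flow leaves $I$ invariant (hypothesis~(b)), so the same dominated-convergence argument runs verbatim; or
\item extend $v$ to a bounded continuous function on $\mathbb R\times[0,T]$ by constant extension in $x$ beyond the endpoints. If $v$ has no continuous extension to $\overline I$, one uses the values in the interior instead, relying on the fact that $\widetilde\gamma(\xi,t)\in I$.
\end{itemize}
Either way, Lemma~\ref{lem: only cont} gives $\mathbf v\in C([0,T];L^2(dr))$.

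\textbf{Step 2: the $C^1$ statement.} Since $\mathbf v$ is continuous, Lemma~\ref{lem: Bochner} shows that $\Gamma(t)=\int_0^t\mathbf v(\tau)\,d\tau$ belongs to $C^1([0,T];L^2(dr))$ with $\Gamma'=\mathbf v$ at every $t$, including the endpoints as one-sided derivatives. By \eqref{eq: boch}, $\widehat u(\cdot,t)=\widehat{\init}+\Gamma(t)$ in $L^2(dr)$. Hence $\widehat u\in C^1([0,T];L^2(dr))$ and $\widehat u_t(\cdot,t)=\mathbf v(t)=v(\widehat u(\cdot,t),t)$ for every $t$, not merely almost every $t$. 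This upgrades item~(d) of Lemma~\ref{lem:santambrogio-lagrangian-first-derivative}.

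\textbf{Step 3: the continuous representative.} The function $\xi\mapsto v(\widetilde\gamma(\xi,t),t)$ is a representative of $\mathbf v(t)$. For $r$-a.e. $\xi$ it is continuous in $t$, as the composition of continuous maps, and it equals $\partial_t\widetilde\gamma(\xi,t)$ pointwise. The last equality holds because $t\mapsto\widetilde\gamma(\xi,t)$ is now a classical $C^1$ solution of the ODE: its integrand $\tau\mapsto v(\widetilde\gamma(\xi,\tau),\tau)$ is continuous, so the pointwise identity \eqref{eq:pointwise-integral-curve} is differentiable in the classical sense by the ordinary fundamental theorem of calculus.
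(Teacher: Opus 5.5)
Your proposal is correct and follows the paper's proof essentially step for step. You apply Lemma~\ref{lem: only cont} with $h=v$ along the flow representative $\Phi^{(t)}\circ\widehat{\init}$, then combine the Bochner identity \eqref{eq: boch} with Lemma~\ref{lem: Bochner}, and finally obtain the pointwise representative from Lemma~\ref{lem:santambrogio-lagrangian-first-derivative}\eqref{item: u_hat abs cont rep}. Your first remedy for the domain mismatch tightens a detail the paper passes over silently: by flow invariance $v$ is only ever evaluated at points of $I\times[0,T]$, so the dominated-convergence argument runs unchanged. That remedy is the one to use, since a bounded continuous $v$ on an open $I$ need not extend continuously.
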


\begin{proof}
The first part is a consequence of Lemma \ref{lem: only cont} with $\gamma =\widehat u$ and $h=v$, using Lemma \ref{lem:santambrogio-lagrangian-first-derivative} part \eqref{item: u_hat abs cont rep}.
For the last part, from the Bochner integral identity \eqref{eq: boch} in the proof of Lemma \ref{lem:santambrogio-lagrangian-first-derivative}
and the fact that $\mathbf{v}\in C([0,T];L^2(dr))$, the fundamental theorem of
calculus for Bochner integrals (Lemma \ref{lem: Bochner}) implies that
$\widehat u\in C^1([0,T];L^2(dr))$ 
and
$\widehat u_t(\cdot,t)
=
\mathbf{v}(t)
=
v(\widehat u(\cdot,t),t)$ in $L^2(dr)$
for every $t\in[0,T]$. 
The moreover follows from hypothesis and  Lemma \ref{lem:santambrogio-lagrangian-first-derivative} part \eqref{item: u_hat abs cont rep}.
\end{proof}

\begin{lemma}\label{lem:hyp-a-cont}
Assume the hypotheses of Lemma~\ref{lem:santambrogio-lagrangian-first-derivative}.
Assume, in addition, that $v\in C^1(I\times[0,T])$, and define
\begin{equation*}
\mathfrak a(x,t):=v_t(x,t)+v_x(x,t)v(x,t), \quad \text{and} \quad  \mathbf{a}:[0,T]\to L^2(dr), \quad
\mathbf{a}:=\mathfrak a(\widehat u(\cdot,t),t).
\end{equation*}
Suppose that $a$ is bounded and continuous.
Then $\mathbf{a}\in C([0,T];L^2(dr))$.
Moreover, if $\mathbf{v}(t):=v(\widehat u(\cdot,t),t)$,
then $\mathbf{v}\in C^1([0,T];L^2(dr))$ and
\begin{equation*}
\mathbf{v}'(t)=\mathbf{a}(t)
=
\bigl(v_t+v_xv\bigr)(\widehat u(\cdot,t),t)
\qquad\text{in }L^2(dr).
\end{equation*}
Consequently, $\widehat u\in C^2([0,T];L^2(dr))$ and $\widehat u_{tt}(\cdot,t)
=
\bigl(v_t+v_xv\bigr)(\widehat u(\cdot,t),t)$
in $L^2(dr)$. Moreover, $\partial_{tt} \widehat u(\cdot, t)$ admits the continuous representative  $\mathbf{a}(t)(\xi) := \bigl(v_t+v_xv\bigr)(\widehat u(\xi,t),t)$ $r$-a.e. $\xi \in I$.
\end{lemma}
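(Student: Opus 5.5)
The plan is to mirror the proof of Corollary~\ref{lem: hyp v cont}, one derivative higher. First, continuity of $\mathbf a$ follows at once from Lemma~\ref{lem: only cont}. Take $\gamma=\widehat u$, with the representative $\widetilde\gamma(\xi,t)=\Phi^{(t)}(\widehat\init(\xi))$ from Lemma~\ref{lem:santambrogio-lagrangian-first-derivative}; this is continuous in $t$ for $r$-a.e.\ $\xi$ by part~\eqref{item: u_hat abs cont rep}. Take $h=\mathfrak a$, which is bounded and continuous by assumption. Then $\mathbf a\in C([0,T];L^2(dr))$.

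Second, we establish the pointwise chain rule along trajectories. Since $v\in C^1(I\times[0,T])$, the ODE \eqref{eq:flow-definition} has a continuous right-hand side. The Carathéodory trajectories $t\mapsto \widehat u(\xi,t)$ are therefore classical $C^1$ solutions, with $\widehat u_t(\xi,t)=v(\widehat u(\xi,t),t)$ for every $t$ and $r$-a.e.\ $\xi$. The composition $t\mapsto v(\widehat u(\xi,t),t)$ is then $C^1$, and its derivative is $v_t+v_x\,\widehat u_t=(v_t+v_xv)(\widehat u(\xi,t),t)=\mathbf a(t)(\xi)$. The fundamental theorem of calculus gives, for $r$-a.e.\ $\xi$ and all $s\le t$,
\begin{equation*}
\mathbf v(t)(\xi)-\mathbf v(s)(\xi)=\int_s^t \mathbf a(\tau)(\xi)\,d\tau .
\end{equation*}

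Third, we lift this identity to a Bochner identity, exactly as in \eqref{eq:bochner-integral-first-proof}. Because $\mathbf a\in C([0,T];L^2(dr))$ is bounded, it is Bochner integrable. The Bochner integral evaluated $r$-a.e.\ pointwise agrees with the pointwise integral; to see this, pair both sides with an arbitrary $\psi\in L^2(dr)$ and use Fubini, which is justified since $|\mathfrak a|\le \|\mathfrak a\|_\infty$ and $dr$ is a probability measure. This yields $\mathbf v(t)=\mathbf v(0)+\int_0^t\mathbf a(\tau)\,d\tau$ in $L^2(dr)$. Lemma~\ref{lem: Bochner} then gives $\mathbf v\in C^1([0,T];L^2(dr))$ with $\mathbf v'=\mathbf a$.

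Finally, Corollary~\ref{lem: hyp v cont} applies because $v$ is continuous. It gives $\widehat u\in C^1$ with $\widehat u_t=\mathbf v$, so $\widehat u\in C^2([0,T];L^2(dr))$ and $\widehat u_{tt}=\mathbf v'=\mathbf a$. The continuous representative of $\widehat u_{tt}$ is $\xi\mapsto\mathfrak a(\widehat u(\xi,t),t)$, using the trajectory representative from part~\eqref{item: u_hat abs cont rep}.

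The main obstacle is the second and third steps: identifying the $L^2(dr)$-derivative of $\mathbf v$ with the pointwise derivative. The Fubini pairing argument handles this, and it is the only delicate point. A secondary subtlety is that $v$ is $C^1$ only on $I$, which may be open. The flow leaves $I$ invariant, so trajectories stay in $I$ and the chain rule is legitimate there.
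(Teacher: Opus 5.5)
Your proposal is correct and follows essentially the same route as the paper's proof. Continuity of $\mathbf a$ comes from Lemma~\ref{lem: only cont} with the flow representative $\Phi^{(t)}\circ\widehat\init$; you then use the pointwise chain rule along trajectories, lift the resulting identity to a Bochner identity in $L^2(dr)$, and conclude with Lemma~\ref{lem: Bochner} and Corollary~\ref{lem: hyp v cont}. Your remark that the trajectories are classical $C^1$ solutions (since $v$ is continuous) and your Fubini pairing argument for the pointwise-to-Bochner step only make explicit what the paper asserts more briefly.
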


\begin{proof}
    By Lemma \ref{lem: hyp v cont}, we know that $\widehat u\in C^1([0,T];L^2(dr))$.
    Also, as an application of Lemma \ref{lem: only cont} using $\gamma=\widehat u$ and $h=a$, we have that $\mathbf{a}\in C([0,T];L^2(dr))$ with representative $\mathbf{a}(t)(\xi) = \mathfrak a(\widehat u(\xi,t),t)$. Indeed, by Lemma~\ref{lem:santambrogio-lagrangian-first-derivative}, we may
    choose the representative
    \begin{equation*}
    \widehat u(\xi,t)=\Phi^{(t)}(\widehat\init(\xi))
    \qquad r\text{-a.e. }\xi,
    \end{equation*}
    where $\Phi^{(t)}$ is the flow generated by $v$. Hence, for $r$-a.e. $\xi$, the map $t\mapsto \widehat u(\xi,t)$
    is absolutely continuous on $[0,T]$. Moreover, by Corollary \ref{lem: hyp v cont}, it satisfies
    \begin{equation*}
    \widehat u_t(\xi,t)
    =
    v(\widehat u(\xi,t),t)
    \qquad \text{ for a.e. }  t.
    \end{equation*}
    We will use this fact to show that $\mathbf{a}$ is the actual derivative of $\mathbf{v}$. 
    Since $v\in C^1(I\times[0,T])$, the chain rule for absolutely
    continuous curves gives, for $r$-a.e. $\xi$,
    \begin{align*}
    \frac{d}{dt}v(\widehat u(\xi,t),t)
    &=
    v_x(\widehat u(\xi,t),t)\widehat u_t(\xi,t)
    +
    v_t(\widehat u(\xi,t),t)=
    \bigl(v_t+v_xv\bigr)(\widehat u(\xi,t),t)=
    \mathbf{a}(t)(\xi),
    \end{align*}
    which is continuous as a function of $t\in [0,T]$. Therefore, for $0\le s\le t\le T$,
    \begin{equation*}
    \mathbf{v}(t)(\xi)-\mathbf{v}(s)(\xi)
    =
    \int_s^t \mathbf{a}(\tau)(\xi)\,d\tau
    \qquad\text{for }r\text{-a.e. }\xi.
    \end{equation*}
    Since $\mathbf{a}\in C([0,T];L^2(dr))$, this identity holds as a Bochner integral
    identity in $L^2(dr)$:
    \begin{equation*}
    \mathbf{v}(t)-\mathbf{v}(s)
    =
    \int_s^t \mathbf{a}(\tau)\,d\tau.
    \end{equation*}
    The fundamental theorem of calculus for Bochner integrals (Lemma \ref{lem: Bochner})  implies that $\mathbf{v}\in C^1([0,T];L^2(dr))$, with 
    $\mathbf{v}'(t)=\mathbf{a}(t)$ in $L^2(dr)$.
    Finally, from Lemma~\ref{lem:santambrogio-lagrangian-first-derivative},
    \begin{equation*}
    \widehat u(\cdot,t)
    =
    \widehat \init+\int_0^t \mathbf{v}(\tau)\,d\tau
    \qquad\text{in }L^2(dr).
    \end{equation*}
    Since $\mathbf{v}\in C^1([0,T];L^2(dr))$, it follows that $\widehat u\in C^2([0,T];L^2(dr))$
    and
    \begin{equation*}
    \widehat u_{tt}(\cdot,t)
    =
    \mathbf{v}'(t)
    =
    \mathbf{a}(t)
    =
    \bigl(v_t+v_xv\bigr)(\widehat u(\cdot,t),t)
    \qquad\text{in }L^2(dr).
    \end{equation*}
\end{proof}

\subsection{Extra Regularity Results}\label{sec: extra reg theory}

    In some cases, the CDT transform $\widehat u $ of $u$ can be completely characterized by \eqref{eq:hat-equivalent-r-general}. When that is possible, the regularity of $\widehat u$ can be proved with the aid of simple tools like the Implicit Function Theorem. In this section we take this route: we first introduce some notation, then show that $\widehat u$ is well-defined by  \eqref{eq:hat-equivalent-r-general} and finally explore the regularity of the maps $t\mapsto \widehat u(\xi,t)$ for each fixed $\xi\in \RR$.

    \begin{definition}
    Let $u:\RR\times [t_0,T]\to \RR$, with $u\geq0$, and $u(\cdot,t) \in L^1(\RR)$ for every $t$. We define its cumulative function with respect to $x$, denoted $F_u$, by
    \begin{equation}\label{eq: Fu as a func of 2 variables}
        F_u(x,t) = F_{u(\cdot,t)}(x) = \int_{-\infty}^x u(y,t)\, dy.
    \end{equation}
    Notice that if $u$ does not depend on $t$ the definition coincides with the usual cumulative function. 
\end{definition}

    \begin{lemma} \label{lem: u_hat_well_defined}
        Let $r\in\mathcal{P}(\RR)$, with $F_r(\xi)\in (0,1)$ for every $\xi\in \RR$, and $u\in C(\RR\times[t_0,T])$ such that 
        \begin{equation*}
            u>0, \qquad \text{ and }  \qquad \int u(x,t) \, dx = 1 \text{ for every }t. 
        \end{equation*}
        Then, $\widehat{u}:\RR\times[t_0,T]\to\RR$ defined by 
        \begin{equation}\label{eq: implicit def of hat u}
            F_u(\widehat{u}(\xi,t),t) = F_r(\xi),
        \end{equation}
        is well-defined, that is, for every $(\xi,t)$ the value $\widehat{u}(\xi,t)$ is uniquely determined. Moreover, it coincides with the $\widehat u$ given in Definition \ref{def: CDT}.
    \end{lemma}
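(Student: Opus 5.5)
The plan is to fix $t\in[t_0,T]$ and study the single-variable map $x\mapsto F_u(x,t)$. Because $u(\cdot,t)$ is continuous and integrable, $F_u(\cdot,t)$ is continuous. In fact it is $C^1$, with $\partial_x F_u(x,t)=u(x,t)$ by the fundamental theorem of calculus. Since $u>0$ everywhere, this derivative is strictly positive, so $F_u(\cdot,t)$ is strictly increasing on $\mathbb R$. Next we need the limits. Monotone (or dominated) convergence, together with $\int u(x,t)\,dx=1$, gives $F_u(x,t)\to 0$ as $x\to-\infty$ and $F_u(x,t)\to 1$ as $x\to+\infty$. Strict monotonicity then implies $0<F_u(x,t)<1$ for every $x$. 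Hence $F_u(\cdot,t)$ is a continuous strictly increasing bijection from $\mathbb R$ onto $(0,1)$.

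Existence and uniqueness now follow directly. Fix $\xi\in\mathbb R$. By hypothesis $F_r(\xi)\in(0,1)$, so the intermediate value theorem gives a point $x$ with $F_u(x,t)=F_r(\xi)$. Strict monotonicity makes this point unique. We therefore define $\widehat u(\xi,t):=F_u(\cdot,t)^{-1}(F_r(\xi))$, and this is well defined for every pair $(\xi,t)$.

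For the identification with Definition \ref{def: CDT}, note that whenever $F_{u(\cdot,t)}$ is a continuous strictly increasing bijection onto $(0,1)$, the quantile reduces to the ordinary inverse. To check this from \eqref{eq: quantile}, take $p\in(0,1)$ and let $x_p$ be the unique point with $F_u(x_p,t)=p$. By monotonicity, $\{x: F_u(x,t)\ge p\}=[x_p,\infty)$, so its infimum is $x_p$. Applying this with $p=F_r(\xi)$ gives $F^\dagger_{u(\cdot,t)}(F_r(\xi))=\widehat u(\xi,t)$ for every $\xi$. In particular the two definitions agree $r$-a.e., which is the sense in which Definition \ref{def: CDT} is stated.

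I do not expect a serious obstacle. The one point needing care is that $F_u(\cdot,t)$ attains neither $0$ nor $1$ and has exactly these limits, since that is what makes the range equal to the full open interval $(0,1)$. Positivity of $u$ and unit mass supply this. The hypothesis $F_r(\xi)\in(0,1)$ is used precisely so that the target value lies in this open range; without it, a value of $0$ or $1$ would have no preimage.
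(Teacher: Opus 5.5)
Your proposal is correct and follows essentially the same route as the paper: $u>0$ makes $F_u(\cdot,t)$ strictly increasing, and unit mass gives limits $0$ and $1$, so $F_u(\cdot,t)$ is a bijection $\mathbb R\to(0,1)$ whose inverse applied to $F_r(\xi)\in(0,1)$ yields existence and uniqueness. Your check that $\{x:\,F_u(x,t)\ge p\}=[x_p,\infty)$, so that the quantile reduces to the ordinary inverse, supplies the ``moreover'' identification with Definition~\ref{def: CDT}, which the paper's proof leaves implicit.
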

    
    \begin{proof}
        Given a fixed $t$, the function $x \mapsto F_u(x,t)$ is differentiable with derivative $u(x,t)>0$. Thus, it is strictly increasing and continuous. Since $u(\cdot,t)$ is a probability density, it satisfies $\lim_{x\to-\infty}F_u(x,t)=0$ and $\lim_{x\to+\infty}F_u(x,t)=1$. Then  $x \mapsto F_u(x,t)$ is a bijection from $\RR$ to $(0,1)$ and has an inverse $(F_{u(\cdot,t)})^{-1}$ such that $(F_{u(\cdot,t)})^{-1}(F_u(x,t)) = x$. Now, for fixed $(\xi,t)$, $F_r(\xi)\in (0,1)$. Thus, applying $(F_{u(\cdot,t)})^{-1}$ on both sides of \eqref{eq: implicit def of hat u} we get that there exists a solution $\widehat u (\xi,t)$ solving the equation. Moreover, if there were $x,y$ such that $F_u(x,t)=F_r(\xi) =F_u(y,t)$, then applying $F_{u(\cdot,t)}^{-1}$ to each term we get $x=y$, providing the uniqueness.  
    \end{proof}

\begin{lemma} \label{lem: differentiability of F_u(x,t)}
    Let $u \in C(\RR\times [t_0,T])$ such that 
    \begin{enumerate}
        \item For every $t\in[t_0,T]$, $u(\cdot,t) \in L^1(\RR)$,
        \item For every $x$, $u(x,\cdot) \in C^1([t_0,T])$,
        \item \label{item: hyp u_t L^1 cont}
        $t\mapsto u_t(\cdot,t) \in C([t_0,T];L^1(\RR))$. 
    \end{enumerate}
    Then, $F_u\in C^1(\mathbb R\times[t_0,T])$, with 
    \begin{equation*}
        \partial_xF_u(x,t) = u(x,t), \qquad \qquad \partial_t F_u(x,t) = \int_{-\infty}^x u_t(y,t)\,dy.
    \end{equation*}
\end{lemma}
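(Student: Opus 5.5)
The plan is to prove the two partial-derivative formulas separately and then check that each partial derivative is jointly continuous on $\mathbb R\times[t_0,T]$. A function with continuous partial derivatives is $C^1$, so this suffices.

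\textbf{The $x$-derivative.} For fixed $t$, $u(\cdot,t)$ is continuous and integrable. The fundamental theorem of calculus applied to $x\mapsto\int_{-\infty}^x u(y,t)\,dy$ then gives $\partial_xF_u(x,t)=u(x,t)$. This is jointly continuous by the hypothesis $u\in C(\mathbb R\times[t_0,T])$.

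\textbf{The $t$-derivative.} Fix $x$ and $t$, and set $G(x,t):=\int_{-\infty}^x u_t(y,t)\,dy$. This is well defined because $u_t(\cdot,t)\in L^1(\mathbb R)$ by hypothesis~\ref{item: hyp u_t L^1 cont}. For $y$ fixed, hypothesis 2 lets us write
\begin{equation*}
u(y,t+h)-u(y,t)=\int_t^{t+h}u_t(y,s)\,ds .
\end{equation*}
We integrate this in $y$ over $(-\infty,x]$ and swap the order of integration by Fubini–Tonelli. The swap is justified because
\begin{equation*}
\int_t^{t+h}\!\int_{\mathbb R}|u_t(y,s)|\,dy\,ds\le |h|\sup_{s\in[t_0,T]}\|u_t(\cdot,s)\|_{L^1}<\infty ,
\end{equation*}
and the supremum is finite since a continuous image of a compact interval is bounded in $L^1$.

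Joint measurability of $u_t$ needs a brief justification. It is a pointwise limit of the continuous difference quotients $(u(y,s+1/n)-u(y,s))\,n$, which gives measurability.

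After the swap,
\begin{equation*}
\frac{F_u(x,t+h)-F_u(x,t)}{h}-G(x,t)=\frac1h\int_t^{t+h}\big(G(x,s)-G(x,t)\big)\,ds .
\end{equation*}
Since $|G(x,s)-G(x,t)|\le\|u_t(\cdot,s)-u_t(\cdot,t)\|_{L^1}$, which tends to $0$ as $s\to t$ by hypothesis~\ref{item: hyp u_t L^1 cont}, the right-hand side vanishes as $h\to0$. This proves $\partial_tF_u=G$, with one-sided limits at the endpoints $t_0$ and $T$.

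\textbf{Joint continuity of $\partial_tF_u$.} We bound
\begin{equation*}
|G(x',t')-G(x,t)|\le\|u_t(\cdot,t')-u_t(\cdot,t)\|_{L^1}+\Big|\int_x^{x'}|u_t(y,t)|\,dy\Big| .
\end{equation*}
The first term tends to $0$ as $t'\to t$ by $L^1$-continuity. The second tends to $0$ as $x'\to x$ by absolute continuity of the integral of the fixed integrable function $u_t(\cdot,t)$.

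\textbf{Main obstacle.} The only delicate step is the interchange of the $t$-derivative and the $y$-integral. It is handled entirely by hypothesis~\ref{item: hyp u_t L^1 cont} through the Fubini argument above, rather than by any pointwise domination of $u_t$.
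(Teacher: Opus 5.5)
Your proof is correct and follows the same route as the paper. Both use the fundamental theorem of calculus for $\partial_x F_u$, a Fubini swap in the $t$-difference quotient closed off by $L^1$-continuity of $t\mapsto u_t(\cdot,t)$, and the same two-term triangle-inequality bound for joint continuity of $\partial_t F_u$; you additionally spell out the joint measurability of $u_t$ and the finiteness of $\sup_s\|u_t(\cdot,s)\|_{L^1}$, which the paper leaves implicit.
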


\begin{proof}
    The formula for the partial derivative with respect to $x$ follows from the Fundamental Theorem of Calculus. For the partial derivative with respect to $t$, take $\tau \in [t_0, T]$, and $h$ small enough. Then, compute 
    \[
    \begin{aligned}
        \frac{F_u(x,\tau+h)-F_u(x,\tau)}{h}
        &=
        \frac{1}{h}
        \int_{-\infty}^x
        \bigl(u(y,\tau+h)-u(y,\tau)\bigr)\,dy
        \\
        &=
        \frac{1}{h}
        \int_{-\infty}^x
        \int_{\tau}^{\tau+h}
        u_t(y,s)\,ds\,dy
        \\
        &=
        \frac{1}{h}
        \int_{\tau}^{\tau+h}
        \int_{-\infty}^x
        u_t(y,s)\,dy\,ds,
    \end{aligned}
    \label{eq: incremental_quotient}
    \]
    where the use of Fubini's theorem is justified because $u_t(\cdot,s)\in L^1(\mathbb R)$, uniformly for $s$ in a compact subinterval of $[t_0,T]$. 
    Now, since $t\mapsto u_t(\cdot,t)$ is continuous as an $L^1(\mathbb R)$-valued map, we obtain
    \[
    \begin{aligned}
        &\left| \frac{1}{h}
        \int_{\tau}^{\tau+h}
        \int_{-\infty}^x
        u_t(y,s)\,dy\,ds - \frac{1}{h}
        \int_{\tau}^{\tau+h}
        \int_{-\infty}^x
        u_t(y,\tau)\,dy\,ds \right| \\
        &\qquad= 
        \frac{1}{h}
        \int_{\tau}^{\tau+h}
        \left|\int_{-\infty}^x
        u_t(y,s)\,dy - 
        \int_{-\infty}^x
        u_t(y,\tau)\,dy\right|\,ds\\
        &\qquad\leq
        \frac{1}{h}
        \int_{\tau}^{\tau+h}
        \int_{-\infty}^x
        |u_t(y,s)-u_t(y,\tau)|\,dy
        \,ds \\
        &\qquad\leq
         \frac{1}{h}
        \int_{\tau}^{\tau+h}
         \|u_t(\cdot,s)-u_t(\cdot,\tau)\|_{L^1(\mathbb R)}
        \,ds \to 0 \qquad \text{when } h\to 0.\\
    \end{aligned}
    \]
    Therefore, the limit for $h\to 0$ in the preceding difference quotient,  gives us
    \begin{equation*}
        \partial_t F_u(x,\tau)
        =
        \int_{-\infty}^x u_t(y,\tau)\,dy.
    \end{equation*}
To show that $\partial_tF_u$ is continuous, take $(x_n,t_n)\to (x_0,\tau)$ and compute
\begin{align*}
    \left|
    \partial_tF_u(x_n,t_n)-\partial_tF_u(x_0,\tau)
    \right|
    &\le
    \left|
    \int_{-\infty}^{x_n}
    \bigl(u_t(y,t_n)-u_t(y,\tau)\bigr)\,dy
    \right| +
    \left|
    \int_{x_0}^{x_n} u_t(y,\tau)\,dy
    \right|\\
     &\le
     \|u_t(\cdot,t_n)-u_t(\cdot,\tau)\|_{L^1(\mathbb R)} + \int_{\min\{x_0,x_n\}}^{\max\{x_0,x_n\}} |u_t(y,\tau)|\,dy \to 0,
\end{align*}
where convergence of the first term follows from Item \ref{item: hyp u_t L^1 cont}, and the second term tends to $0$ due to the absolute continuity of the Lebesgue integral, since $u_t(\cdot,\tau)\in L^1(\RR)$.
\end{proof}

\begin{lemma}\label{lem: F_u deriv wrt t}
    Let $u \in C(\RR\times [t_0,T])$ satisfying the hypotheses of Lemmas \ref{lem: u_hat_well_defined} and \ref{lem: differentiability of F_u(x,t)}. Then, $\widehat{u}:\RR\times [t_0,T] \to \RR$ is well-defined and for every $\xi$, and the function $t\mapsto \widehat u(\xi,t)$ is differentiable for $t\in[t_0,T]$, with 
    \begin{equation*}
        \widehat{u}_t(\xi,t) = -\frac{\int_{-\infty}^x u_t(y,t)\, dy}{u(x,t)}\Bigg|_{(x,t) = (\widehat{u}(\xi,t),t)}
    \end{equation*}
\end{lemma}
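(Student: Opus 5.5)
The plan is to apply the classical Implicit Function Theorem to the map
\begin{equation*}
    G(x,t;\xi) := F_u(x,t) - F_r(\xi),
\end{equation*}
with $\xi$ held fixed. Well-definedness of $\widehat u$ comes directly from Lemma~\ref{lem: u_hat_well_defined}: for each $(\xi,t)$ there is a unique $x=\widehat u(\xi,t)$ with $G(\widehat u(\xi,t),t;\xi)=0$, and it coincides with the CDT. By Lemma~\ref{lem: differentiability of F_u(x,t)}, $F_u\in C^1(\mathbb R\times[t_0,T])$, with $\partial_x F_u = u$ and $\partial_t F_u(x,t)=\int_{-\infty}^x u_t(y,t)\,dy$. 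Hence $G(\cdot,\cdot;\xi)$ is $C^1$ in $(x,t)$, and $\partial_x G = u(x,t)>0$ everywhere, by the positivity hypothesis of Lemma~\ref{lem: u_hat_well_defined}.

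Fix $\xi$ and $t^*\in[t_0,T]$, and set $x^*=\widehat u(\xi,t^*)$. The Implicit Function Theorem gives a neighborhood $J$ of $t^*$ in $[t_0,T]$ and a $C^1$ function $g$ on $J$ with $g(t^*)=x^*$ and $G(g(t),t;\xi)=0$. At an endpoint $t^*\in\{t_0,T\}$, I would first extend $F_u$ to a $C^1$ function on a slightly larger time interval, for instance by a first-order Taylor extension in $t$, so that the standard theorem applies; one-sided derivatives then result. By the uniqueness part of Lemma~\ref{lem: u_hat_well_defined}, $g(t)=\widehat u(\xi,t)$ on $J$. So $t\mapsto\widehat u(\xi,t)$ is $C^1$ near $t^*$, and its derivative is
\begin{equation*}
    \widehat u_t(\xi,t) = -\frac{\partial_t F_u}{\partial_x F_u}\Big|_{(\widehat u(\xi,t),t)},
\end{equation*}
which is exactly the claimed formula after substituting the expressions for $\partial_t F_u$ and $\partial_x F_u$.

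As a consistency check, the same formula follows formally by differentiating $F_u(\widehat u(\xi,t),t)=F_r(\xi)$ in $t$ with the chain rule. That computation is legitimate only once differentiability of $\widehat u$ in $t$ is known, and supplying that is precisely the role of the Implicit Function Theorem.

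The main obstacle is the regularity of $F_u$ jointly in $(x,t)$, which the Implicit Function Theorem requires. That regularity is already provided by Lemma~\ref{lem: differentiability of F_u(x,t)}, so the only remaining delicate points are the endpoint issue and identifying the local implicit solution with the global $\widehat u$. Both are handled as described above.
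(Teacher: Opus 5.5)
Your proposal is correct and is essentially the paper's proof: for fixed $\xi$ you apply the implicit function theorem to $F_u(x,t)=F_r(\xi)$, using $F_u\in C^1(\mathbb{R}\times[t_0,T])$ and $\partial_xF_u=u>0$, and you identify the local implicit solution with $\widehat u(\xi,\cdot)$ through the uniqueness in Lemma~\ref{lem: u_hat_well_defined}. Your endpoint treatment goes beyond what the paper writes, but a first-order Taylor extension in $t$ need not be $C^1$, since its $x$-derivative involves $u_t(\cdot,T)$ and the hypotheses do not make that continuous in $x$; the reflection $2F_u(x,T)-F_u(x,2T-t)$ for $t>T$, and its analogue at $t_0$, is always $C^1$ with matching partial derivatives, so use that instead.
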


\begin{proof}
    By Lemma \ref{lem: u_hat_well_defined}, $\widehat u$ is well-defined on
    $\RR\times [t_0,T]$ and is the unique function satisfying $F_u(\widehat u(\xi,t),t)=F_r(\xi)$. Moreover, by Lemma \ref{lem: differentiability of F_u(x,t)}, $F_u$ is continuously differentiable and $\partial_x F_u(x,t)=u(x,t)>0$.
    Therefore, for each fixed $\xi$, the implicit function theorem applied to $F_u(x,t)=F_r(\xi)$ shows that $t\mapsto x(t)$ (that satisfies $x(t) = \widehat u(\xi,t)$ by uniqueness) is differentiable. Differentiating the identity
    $F_u(\widehat u(\xi,t),t)=F_r(\xi)$ with respect to $t$ gives
    \begin{equation*}
        \widehat u_t(\xi,t)
        =
        -\frac{\partial_t F_u(x,t)}{\partial_x F_u(x,t)}
        \Bigg|_{(x,t)=(\widehat u(\xi,t),t)}
        =
        -\frac{\int_{-\infty}^x u_t(y,t)\,dy}{u(x,t)}
        \Bigg|_{(x,t)=(\widehat u(\xi,t),t)}.
    \end{equation*}

\end{proof}

\section{Hyperbolic Equations}\label{sec: hyperbolic case}

In this section, we study hyperbolic equations in CDT space. Our proofs involve a more rigorous verification of how their dynamics can be represented through a semi-Lagrangian form as in \eqref{eq:ut-hat-v-form}. This structure allows us to estimate the Kolmogorov $n$-widths of the associated solution manifolds $\widehat{\mathcal M}_{[0,T]}$ in CDT space. 
Essentially, we control $\|\widehat u(\cdot,t)\|_{L^2(dr)}$, $\|\widehat u_t(\cdot,t)\|_{L^2(dr)}$ and $\|\widehat u_{tt}(\cdot,t)\|_{L^2(dr)}$ to provide bounds for $d_n(\widehat{\mathcal M}_{[0,T]},L^2(dr))$.
The same strategy can also be applied in the native space, providing comparable estimates that help assess the improvement obtained by using the CDT.

\subsection{Kolmogorov Width Bounds in CDT Space and Comparisons with Native Space}\label{sec: kolm hyp}

Let $T>0$ and let $I\subseteq \RR$ be a closed interval or the whole real line. 
Consider the conservation law
\begin{equation}\label{eq: hyp}
    \begin{cases}
        &u_t + (f(u))_x = 0 \qquad \text{for } (x,t) \in I\times[0,T]\\
        & u(\cdot,0)=\init,   
    \end{cases}
\end{equation}
with zero outward flux. That is, $f(u(x,t)) = 0$ for $x $ on the boundary of $I$. We assume that $\init: I\to \RR$ is a probability density function with finite second moment if $I=\RR$.  
In particular, $\init\geq 0$.

Through this section let $r$ be a probability density supported on $I$ with finite second moment if $I=\RR$. Let $\widehat \init\in L^2(dr)$ be the monotone map pushing $r$ forward to the initial density $\init$.

Let us denote the CDT solution space by
    $$\widehat{\mathcal M}_{[0,T]}:=\{\widehat u(\cdot,t):t\in[0,T]\}$$

 In order to properly state our first result (Theorem \ref{thm:hyperbolic-cdt-lipschitz}), we need the following definition of  entropy solutions by S.N. Kružkov \cite{Kruzhkov1969Continuity,kruvzkov1970first}. Additionally, in Theorem \ref{thm:hyperbolic-cdt-lipschitz} we restrict to the Cauchy problem on the whole line
$I=\mathbb R$. The bounded-interval case requires entropy boundary
conditions and is not treated here.
 
\begin{definition}[Entropy solution]\label{def:entropy-solution-scalar}
Let $T>0$, let $f\in C^1_{\mathrm{loc}}(\mathbb R)$, and let
$\init\in L^\infty(\mathbb R)$. A function $u\in L^\infty(\mathbb R\times(0,T))
\cap C([0,T];L^1_{\mathrm{loc}}(\mathbb R))$
is called an entropy solution of \eqref{eq: hyp} on $\mathbb R\times[0,T]$ if $u(\cdot,0)=\init \in L^1_{\mathrm{loc}}(\mathbb R)$, and for every
$k\in\mathbb R$ and every nonnegative test function
$\varphi\in C_c^\infty(\mathbb R\times[0,T))$,
\begin{equation}\label{eq: entropy cond}
   \int_0^T\int_{\mathbb R}
\left[
|u-k|\varphi_t
+
\operatorname{sgn}(u-k)\big(f(u)-f(k)\big)\varphi_x
\right]\,dx\,dt
+
\int_{\mathbb R}|\init-k|\varphi(x,0)\,dx
\ge 0, 
\end{equation}
where $\operatorname{sgn}$ denotes the sign function with the convention $\operatorname{sgn}(0)=0$.
\end{definition}

Notice that since $u$ and $\init$ in Definition \ref{def:entropy-solution-scalar} are essentially bounded, choosing $k$ larger
than their essential ranges and then choosing $k$ smaller than their
essential ranges in \eqref{eq: entropy cond} gives the usual distributional
formulation
\begin{equation*}
\int_0^T\int_{\mathbb R}
\big(u\phi_t+f(u)\phi_x\big)\,dx\,dt
+
\int_{\mathbb R}\init(x)\phi(x,0)\,dx
=0
\end{equation*}
for every $\phi\in C_c^\infty(\mathbb R\times[0,T))$. Thus every entropy
solution in the sense of Definition \ref{def:entropy-solution-scalar} is also
a distributional weak solution.
Informally speaking, an entropy solution is the unique physically meaningful weak solution selected by entropy inequalities (\eqref{eq: entropy cond} in Definition \ref{def:entropy-solution-scalar}).

\begin{theorem}\label{thm:hyperbolic-cdt-lipschitz}
Let $T>0$. Assume $f\in C^1_{\mathrm{loc}}(\mathbb R)$, $f(0)=0$, and let
\begin{equation*}
\init\in L^1(\mathbb R)\cap L^\infty(\mathbb R),
\qquad
\init\ge 0,
\qquad
\int_{\mathbb R}\init(x)\,dx=1,
\qquad
\int_{\mathbb R}x^2\init(x)\,dx<\infty .
\end{equation*}
Set $M:=\|\init\|_{L^\infty}$ and
$B_M:=\sup_{0<z\le M}\left|\tfrac{f(z)}{z}\right|$.
Then there exists a unique entropy solution $u$, in the sense of
Definition~\ref{def:entropy-solution-scalar}, of  \eqref{eq: hyp}
on $\RR\times[0,T]$ with initial condition $\init$. 
Furthermore, $u(\cdot,t)$ is a probability density in $\mathcal P_2(\mathbb R)$ for every
$t\in[0,T]$, and the CDT trajectory satisfies
\begin{equation}\label{eq: nodes}
    \|\widehat u(\cdot,t)-\widehat u(\cdot,s)\|_{L^2(dr)}
=
W_2(u(\cdot,t),u(\cdot,s))
\le B_M|t-s|.
\end{equation}
Moreover,
\begin{equation*}
d_n\big(\widehat{\mathcal M}_{[0,T]};L^2(dr)\big)
\le
\frac{B_MT}{2n}.
\end{equation*}
\end{theorem}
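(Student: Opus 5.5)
Existence and uniqueness of the entropy solution follow from Kružkov's theory \cite{Kruzhkov1969Continuity,kruvzkov1970first}, since $\init\in L^1\cap L^\infty$ and $f\in C^1_{\mathrm{loc}}$. The same theory gives the standard properties: the maximum principle $0\le u\le M$ (comparison with the constant solutions $0$ and $M$, using $f(0)=0$), $L^1$-contraction, conservation of mass $\int u(x,t)\,dx=1$, and continuity $u\in C([0,T];L^1(\mathbb R))$. Thus each $u(\cdot,t)$ is a probability density. The remaining issues are finiteness of the second moment and the Lipschitz bound in $W_2$. Once \eqref{eq: nodes} is proved, the width estimate follows from the Lipschitz version of Lemma~\ref{lem:curve-width}\eqref{item:1-curve}. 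Its proof only uses $\|\gamma(t)-\gamma(s)\|\le M_1|t-s|$: one takes the span of the $n$ values of $\gamma$ at the midpoints of a uniform partition of $[0,T]$ into $n$ intervals, and every point of the curve lies within $M_1 T/(2n)$ of one of them. With $M_1=B_M$ this gives $B_MT/(2n)$.

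\textbf{The key observation} is that $u$ solves a continuity equation $u_t+(uv)_x=0$ with the bounded velocity $v:=f(u)/u$ on $\{u>0\}$ and $v:=0$ on $\{u=0\}$. Since $0\le u\le M$, we have $|v|\le B_M$. The flux satisfies $f(u)=uv$ a.e., and $f(u)=0$ where $u=0$ because $f(0)=0$. The distributional formulation recorded after Definition~\ref{def:entropy-solution-scalar} then says exactly that $(u(\cdot,t)\,dx)_t$ is a weak solution of the continuity equation with velocity $v\in L^\infty$ and $\|v(\cdot,t)\|_{L^2(u(\cdot,t)dx)}\le B_M$.

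\textbf{Moment bound.} I would test the equation against truncations of $x^2$ (cutoffs $\chi_R(x)x^2$) and pass $R\to\infty$. This gives $\frac{d}{dt}\int x^2u \le 2B_M(\int x^2u)^{1/2}$, hence $\big(\int x^2 u(\cdot,t)\big)^{1/2}\le(\int x^2\init)^{1/2}+B_Mt$, so $u(\cdot,t)\in\mathcal P_2$. The cutoff limit requires some care but is routine. Alternatively, $W_2$-continuity from the next step, starting at $\init$, gives the same bound directly.

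\textbf{Lipschitz bound.} Here I would invoke the Benamou–Brenier characterization, or equivalently \cite[Theorem 5.14 / Proposition 5.30]{Santambrogio-OTAM}. A narrowly continuous curve of measures solving the continuity equation with velocity field $v_t$ satisfying $\|v_t\|_{L^2(\mu_t)}\le B$ for a.e.\ $t$ is $B$-Lipschitz in $W_2$, via the bound $W_2(\mu_s,\mu_t)\le\int_s^t\|v_\tau\|_{L^2(\mu_\tau)}d\tau$. Narrow continuity follows from $L^1$-continuity. Combining this with the CDT isometry \eqref{eq:cdt-isometry} yields \eqref{eq: nodes}.

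I expect this step to be the main obstacle, because $v$ is not Lipschitz (there may be shocks), so Lemma~\ref{lem:santambrogio-lagrangian-first-derivative} does not apply directly. The point to stress is that the Benamou–Brenier inequality requires only $L^2(\mu_t)$-integrability of the velocity, not uniqueness of flows. If a self-contained argument is preferred, I would instead regularize: use the vanishing-viscosity approximations $u^\varepsilon$ for $u_t+f(u)_x=\varepsilon u_{xx}$, which converge to $u$ in $C([0,T];L^1)$ and satisfy $0<u^\varepsilon\le M$. For these the velocity $v^\varepsilon=(f(u^\varepsilon)-\varepsilon u^\varepsilon_x)/u^\varepsilon$ is smooth, and the argument of Lemma~\ref{lem:santambrogio-lagrangian-first-derivative} gives a Lipschitz bound, though with an extra diffusive contribution that makes it the weaker route. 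For that reason I would rely on the Benamou–Brenier theorem and finish with lower semicontinuity of $W_2$ where needed.
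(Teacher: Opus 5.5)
Your proposal is correct and is essentially the paper's proof: Kru\v{z}kov well-posedness with comparison to the constants $0$ and $M$, then rewriting the law as a continuity equation with $v=f(u)/u$ (set to $0$ where $u=0$) and $|v|\le B_M$. The paper then applies the Benamou--Brenier bound $W_2(u(\cdot,t),u(\cdot,s))\le\int_s^t\|v(\cdot,\tau)\|_{L^2(u(\cdot,\tau)\,dx)}\,d\tau$, the CDT isometry, and the span of the snapshots at the midpoints $t=(2k+1)T/(2n)$ for the width bound, exactly as you do.

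The paper is more specific only in the second-moment estimate, and it also proves mass conservation by a cutoff argument rather than quoting it. Instead of $\chi_R(x)x^2$, whose derivative produces an annulus term $\int_{R\le|x|\le 2R}|x|\,|f(u)|\,dx$ that the truncated moment does not control, it tests with a bounded smooth $\psi_R\approx\min\{x^2,R^2\}$ satisfying $|\psi_R'|\le 2\sqrt{\psi_R}$. With this choice $|m_R'|\le 2B_M m_R^{1/2}$ closes at fixed $R$, and monotone convergence finishes.
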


\begin{proof}
By Kružkov's well-posedness theorem \cite{kruvzkov1970first} (applied to scalar conservation laws), there exists a unique
entropy solution
$u\in C([0,T];L^1(\mathbb R))
\cap L^\infty(\mathbb R\times(0,T))$ for \eqref{eq: hyp}.
By the comparison principle for Kružkov entropy solutions \cite{kruvzkov1970first}, applied to the
constant entropy solutions $0$ and $M:=\|\init\|_{L^\infty}$, and using
$0\le \init\le M$, we obtain
$0\le u(x,t)\le M$ for a.e. $(x,t)\in\mathbb R\times(0,T)$. (Indeed, since the solution remains in the interval $[0,M]$, one may first extend
$f|_{[0,M]}$ to a globally Lipschitz $C^1$ flux without changing the
 solution.)
Moreover, by the $L^1$-contraction property \cite{kruvzkov1970first}, applied with the zero solution,
$
\|u(\cdot,t)\|_{L^1(\mathbb R)}
\le
\|\init\|_{L^1(\mathbb R)}
=
1$.
Therefore $u(\cdot,t)\in L^1(\mathbb R)\cap L^\infty(\mathbb R)$ for every
$t$, and since $0\le u\le M$, and $|f(u(x,t))|\le B_M u(x,t)$.
In particular $f(u(\cdot,t))\in L^1(\mathbb R)$. Notice  that
$B_M<\infty$, since $f\in C^1_{\mathrm{loc}}(\mathbb R)$ and $f(0)=0$.

We next prove that $u(\cdot,t)$ has unit mass. Since entropy solutions are
distributional weak solutions, we may use the weak formulation. Let $\varphi_R\in C_c^\infty(\mathbb R)$
satisfy $0\le \varphi_R\le 1$, $\varphi_R\equiv 1$ on $[-R,R]$, and
$|\varphi_R'|\le C/R$. 
Using a temporal cutoff approximating $\mathbf 1_{[0,t]}$ in the weak
formulation and testing in space with $\varphi_R$, we obtain
\begin{equation*}
\int_{\mathbb R} u(x,t)\varphi_R(x)\,dx
-
\int_{\mathbb R} \init(x)\varphi_R(x)\,dx
=
\int_0^t\int_{\mathbb R} f(u(x,\tau))\varphi_R'(x)\,dx\,d\tau .
\end{equation*}
Using $|f(u)|\le B_Mu$ and the $L^1$-contraction property \cite{kruvzkov1970first}, applied with the zero solution, i.e., $\|u(\cdot,t)\|_{L^1(\mathbb R)}
\le
\|\init\|_{L^1(\mathbb R)}
=1$, we have
\begin{equation*}
\left|
\int_0^t\int_{\mathbb R} f(u)\varphi_R'\,dx\,d\tau
\right|
\le
\frac{CB_Mt}{R}.
\end{equation*}
Letting $R\to\infty$ gives
\begin{equation*}
\int_{\mathbb R}u(x,t)\,dx=\int_{\mathbb R}\init(x)\,dx=1.
\end{equation*}

Now, let us verify finite second moment. Let $\psi_R$ be a smooth bounded approximation of
$\min\{x^2,R^2\}$, chosen so that
$0\le \psi_R\le x^2$, $\psi_R(x)\rightarrow x^2$ as $R\to \infty$, and
$|\psi_R'|\le 2\sqrt{\psi_R}$.
Set
\begin{equation*}
m_R(t):=\int_{\mathbb R}\psi_R(x)u(x,t)\,dx .
\end{equation*}
Testing the equation with $\psi_R$, justified by approximation, yields
\begin{equation*}
\frac{d}{dt}m_R(t)
=
\int_{\mathbb R}\psi_R'(x)f(u(x,t))\,dx
\end{equation*}
in the sense of distributions in time. Therefore,
\begin{equation*}
\left|\frac{d}{dt}m_R(t)\right|
\le
2B_M\int_{\mathbb R}\sqrt{\psi_R(x)}\,u(x,t)\,dx
\le
2B_M\,m_R(t)^{1/2},
\end{equation*}
where we used $\int u(x,t)\,dx=1$. Hence
\begin{equation*}
m_R(t)^{1/2}
\le
m_R(0)^{1/2}+B_Mt
\le
\left(\int_{\mathbb R}x^2\init(x)\,dx\right)^{1/2}+B_Mt .
\end{equation*}
Letting $R\to\infty$, by monotone convergence,
\begin{equation*}
\int_{\mathbb R}x^2u(x,t)\,dx
\le
\left[
\left(\int_{\mathbb R}x^2\init(x)\,dx\right)^{1/2}+B_Mt
\right]^2
<\infty .
\end{equation*}
Thus $u(\cdot,t)\in\mathcal P_2(\mathbb R)$ for every $t\in[0,T]$ (where one is using the $L^1$-continuous representative).

Now, since $f(0)=0$, the equation \eqref{eq: hyp} can be written as a continuity equation
\begin{equation}\label{eq: cont eq}
    u_t+(uv)_x=0,
\qquad
v(x,t)=
\begin{cases}
\dfrac{f(u(x,t))}{u(x,t)}, & u(x,t)>0,\\[1.2ex]
0, & u(x,t)=0.
\end{cases} 
\end{equation}
The assumption gives that  $\|v(\cdot,t)\|_{L^\infty(\RR)}\le B_M$. 

Since $uv=f(u)$ a.e., the conservation law \eqref{eq: hyp} is exactly the continuity
equation \eqref{eq: cont eq}
(we do not need $u$ to be classical solution). Assume without loss of generality that $s\leq t$. 
Using that $\int u(x,\tau)\,dx=1$, the dynamic formulation of the Wasserstein distance yields
\begin{equation*}
W_2(u(\cdot,t),u(\cdot,s))
\le
\int_s^t
\left(\int |v(x,\tau)|^2u(x,\tau)\,dx\right)^{1/2}d\tau\leq B_M|t-s|.
\end{equation*}
Note that this computation is valid since we already know that the pair $u,v$ solves the continuity equation $u_t + (uv)_x = 0$ 
in the sense of distributions and we have $\int_0^T \int |v(x,t)|^2 u(x,t)\,dx\,dt < \infty$, then 
these hypotheses imply that $t \mapsto u(\cdot,t)$ is an absolutely continuous curve in $(\mathcal{P}_2,W_2)$ \cite{Santambrogio-OTAM}.
Finally, the CDT isometry $W_2(u(\cdot,t),u(\cdot,s))=\|\widehat{u}(\cdot,t)-\widehat{u}(\cdot,s)\|_{L^2(dr)}$ (see \eqref{eq:cdt-isometry}) gives the first claim. 

To obtain the Kolmogorov width bound, given $n\in \mathbb N$, let us choose $2n + 1$ equally spaced time nodes $t_j=\frac{jT}{2n}$, $j=0,\dots, 2n$.
Let 
$$V_n:=\mathrm{span}\{\widehat u(\cdot,t_{2k+1})\}_{k=0}^{n-1}$$ be the span of the corresponding odd-time CDT snapshots, and approximate each point of the curve by the nearest node. Then $\mathrm{dim}(V_n)\leq n$, and using \eqref{eq: nodes} we have that the $L^2(dr)$-distance of an element $\widehat u(\cdot,t)$ to the linear subspace $V_n$ is upper bounded by  
\begin{equation*}
    \min_{k\in\{0,\dots,n-1\}} \|\widehat u(\cdot,t) - \widehat u(\cdot,t_{2k+1})\|_{L^2(dr)} \leq \min_{k\in\{0,\dots,n-1\}} B_M |t-t_{2k+1}| \leq B_M T/2n.  
\end{equation*}
Since the Kolmogorov $n$ width is the infimum over all $n$-dimensional subspaces,
we conclude that
$d_n\big(\widehat{\mathcal M}_{[0,T]};L^2(dr)\big)
\le
\tfrac{B_MT}{2n}$.
\end{proof}

\begin{proposition}\label{prop: u, u_t, u_tt}
    Let $\init \in C^1(I)$ with $\|\init\|_\infty < M$, $\|\init'\|_\infty < N$. Let $f\in C^2([-M,M])$ and assume $G(x):=f(x)/x$ extends to a function of class $C^1([-M,M])$. Let $u$ be a classical solution of the equation \eqref{eq: hyp} with initial condition $\init$, strictly before shock formation  (i.e., assume $1+tf''(\init(\xi))\init'(\xi)\geq \alpha>0$ for all $t\in[0,T]$, $\xi\in I$). 
    If $I=(a,b)$, assume in addition that the velocity $v(x,t)=G(u(x,t))$ satisfies the no-outflow condition $v(a,t)=v(b,t)=0$
    \footnote{(this is implied by the zero-flux condition $f(u)=0$ at the boundary
    provided $G(u)=0$ at the corresponding boundary states; in particular,
     if the boundary state is $u=0$, this requires $G(0)=f'(0)=0$)}. 
    Then, 
    \begin{enumerate}[(i)]
        \item $\widehat u \in C^2([0,T];L^2(dr))$;
        \item 
            $\widehat u_t(\cdot,t) = v(\widehat u(\cdot,t),t) 
            = G(u(\widehat u(\cdot,t),t))$ with  
            $$\|\widehat u_t(\cdot,t)\|_{L^2(dr)}^2 
            =\int \init(\xi) (G(\init(\xi)))^2(1+tf''(\init(\xi))\init '(\xi))d\xi;$$ 
        \item 
            $\widehat u_{tt}(\cdot,t) = -\left.\left((G'\circ u)^2 \, u \, u_x\right)\right| _{(\widehat u(\cdot,t),t)}$ with 
            $$\|\widehat u_{tt}(\cdot,t)\|_{L^2(dr)}^2 
            =\int\frac{\init(\xi)^3 (G'(\init(\xi)))^4(\init'(\xi))^2}{1+tf''(\init(\xi))\init '(\xi)}d\xi.
            $$ 
    \end{enumerate}
\end{proposition}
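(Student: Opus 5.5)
The plan is to write the hyperbolic equation as a continuity equation with a smooth velocity, invoke the regularity machinery of Section~\ref{sec: reg theory}, and then compute the norms by pushforward. Before shocks the classical solution is given implicitly by characteristics, $u(\Phi(\xi,t),t)=\init(\xi)$ with $\Phi(\xi,t)=\xi+tf'(\init(\xi))$. The hypothesis $\Phi_\xi=1+tf''(\init)\init'\ge\alpha>0$ makes $\Phi(\cdot,t)$ a $C^1$ diffeomorphism of $I$. Differentiating the implicit relation gives
\[
u_x(\Phi,t)=\frac{\init'}{\Phi_\xi},
\]
which is bounded by $N/\alpha$. Also $u_t=-f'(u)u_x$ is continuous and bounded, and $|u|\le M$ by constancy along characteristics. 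Since $f(u)=u\,G(u)$ with $G\in C^1([-M,M])$, the equation is $u_t+(uv)_x=0$ with $v=G(u)\in C^1(I\times[0,T])$. This $v$ is bounded by $\|G\|_\infty$, and it is Lipschitz in $x$ uniformly in $t$, with constant at most $\|G'\|_\infty N/\alpha$. The no-outflow assumption gives invariance of $I$. So Lemma~\ref{lem:santambrogio-lagrangian-first-derivative} and Corollary~\ref{lem: hyp v cont} apply, yielding $\widehat u\in C^1([0,T];L^2(dr))$ with $\widehat u_t=v(\widehat u,t)=G(u(\widehat u,t))$.

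For the second derivative, compute the Lagrangian acceleration $\mathfrak a=v_t+v_xv$. Using $v_t=G'(u)u_t=-G'(u)f'(u)u_x$, $v_x=G'(u)u_x$ and $f'(u)=G(u)+uG'(u)$, we get
\[
\mathfrak a=G'(u)u_x\bigl(G(u)-f'(u)\bigr)=-(G'(u))^2\,u\,u_x .
\]
This expression is continuous, and it is bounded by $\|G'\|_\infty^2MN/\alpha$. Lemma~\ref{lem:hyp-a-cont} then gives $\widehat u\in C^2([0,T];L^2(dr))$ together with formula (iii) for $\widehat u_{tt}$.

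For the norm identities, use \eqref{eq:norm-v} and \eqref{eq:norm-a}: each norm equals $\int|\phi(x,t)|^2u(x,t)\,dx$ with $\phi=v$ or $\phi=\mathfrak a$. Change variables $x=\Phi(\xi,t)$, so $dx=\Phi_\xi\,d\xi$. This is the same as noting that $\Phi(\cdot,t)_\#(\init\,d\xi)=u(\cdot,t)\,dx$, which follows from mass conservation along characteristics, $u(\Phi,t)\Phi_\xi=\init$. Then $\int G(u)^2u\,dx=\int G(\init)^2\,u(\Phi,t)\Phi_\xi\,d\xi$, which is not yet the stated formula. Substituting $u(\Phi,t)=\init$ literally instead gives $\int \init\,G(\init)^2\,\Phi_\xi\,d\xi$, matching (ii). For (iii), $u_x(\Phi,t)=\init'/\Phi_\xi$, and the factor $u\,dx$ contributes $\init\,\Phi_\xi\,d\xi$ under this same literal substitution. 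The total is therefore $\init^3(G')^4(\init')^2/\Phi_\xi\,d\xi$, which matches (iii). So the stated formulas correspond to substituting $u=\init(\xi)$ and $dx=\Phi_\xi\,d\xi$ directly in $\int|\phi|^2u\,dx$, and I would present the computation that way.

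The main obstacle is making the characteristic parametrization rigorous. This means showing that $\Phi(\cdot,t)$ maps $I$ onto $I$; on a bounded interval this uses the boundary velocity condition, so that $f'(\init)=0$ at the endpoints where needed. It also means checking the uniform bounds on $u_x$ and $u_t$ required for the Lipschitz and continuity hypotheses of the lemmas. Beyond that, the work is bookkeeping and the change of variables.
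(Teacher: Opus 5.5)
Your overall route is the paper's. You write the equation as $u_t+(uv)_x=0$ with $v=G(u)$, obtain $\widehat u\in C^1([0,T];L^2(dr))$ from Lemma~\ref{lem:santambrogio-lagrangian-first-derivative} and Corollary~\ref{lem: hyp v cont}, compute $\mathfrak a=-(G'(u))^2u\,u_x$ and invoke Lemma~\ref{lem:hyp-a-cont}, then evaluate \eqref{eq:norm-v} and \eqref{eq:norm-a} by substituting $x=\Phi(\xi,t)$. Your bounds $|u_x|\le N/\alpha$, $\mathrm{Lip}_x(v)\le\|G'\|_\infty N/\alpha$ and $|\mathfrak a|\le\|G'\|_\infty^2MN/\alpha$ are exactly what those lemmas need.

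The problem is your justification of the last step. You claim $\Phi(\cdot,t)_\#(\init\,d\xi)=u(\cdot,t)\,dx$, i.e.\ $u(\Phi,t)\,\Phi_\xi=\init$. This is false for nonlinear $f$. Since $\Phi_\xi=1+tf''(\init)\init'\not\equiv 1$ in general, it contradicts the relation $u(\Phi,t)=\init$ that you use everywhere else. Characteristics transport values of $u$, not mass. What $\Phi(\cdot,t)$ pushes forward to $u(\cdot,t)\,dx$ is $\init\,\Phi_\xi\,d\xi$. The map that pushes $\init\,d\xi$ to $u(\cdot,t)\,dx$ is the flow generated by $v=G(u)$ in Lemma~\ref{lem:santambrogio-lagrangian-first-derivative}. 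Its speed $f(u)/u$ differs from the characteristic speed $f'(u)=G(u)+uG'(u)$ unless $G'\equiv0$, i.e.\ unless the flux is linear. The CDT trajectory is built on the mass flow, not the characteristic flow, so this distinction must not be blurred.

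Consequently, what you call a ``literal substitution'' is not a formal shortcut you adopt because it matches the statement. It is the correct change of variables for the $C^1$ diffeomorphism $\Phi(\cdot,t)$:
$\int|\phi(x,t)|^2u(x,t)\,dx=\int|\phi(\Phi(\xi,t),t)|^2\,\init(\xi)\,\Phi_\xi(\xi,t)\,d\xi$.
Combined with $G(u(\Phi,t))=G(\init)$ and $u_x(\Phi,t)=\init'/\Phi_\xi$, it gives (ii) and (iii) at once, exactly as in the paper. By contrast, using your asserted identity in (iii) would give $\int\init^3(G'(\init))^4(\init')^2\,\Phi_\xi^{-2}\,d\xi$, which is wrong. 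Drop the pushforward claim and present the substitution directly.

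A milder form of the same conflation appears in your final paragraph. The no-outflow hypothesis concerns $v=G(u)$, whereas invariance of $I$ under $\Phi$ requires $f'(\init)=0$ at the endpoints. These coincide when the boundary state is $0$, because $f'(0)=G(0)$, but not in general (cf.\ the remark following the proposition). The paper sidesteps this by extending $\init$ by zero, changing variables on $\mathbb{R}$, and using that the support stays inside $I$.
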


\begin{remark}
    In some cases it will not be simultaneously satisfied that the characteristic flow and the flow given by $v$ will leave the interval $I$ invariant for any initial condition. In other words, it is not true that simultaneously $f'(u_0(b)) = 0 = v(b,t)$. In these cases, the theorem will be valid for the initial conditions that guarantee this. For example, if $u_0\in C_c((a,b))$, then $f(u)=uv$ will automatically be zero at the boundary, and extending $u_0\equiv 0$ outside of $I$ will allow us to work on $\mathbb{R}$ where the flow invariance is trivially satisfied.  
   
\end{remark}

\begin{proof}
    Let $u$ be the solution of the hyperbolic equation \eqref{eq: hyp} with initial condition $u(\cdot,0)=\init$ obtained by the method of characteristics. That is, let $g^{(t)}$ be the characteristic flow generated by $\dot{\zeta}(t)=f'(\init (\xi))$, $\zeta(0) = \xi$. Then, $g^{(t)}(\xi)=\xi+tf'(\init(\xi))$ and $\xi(x,t):=(g^{(t)})^{-1}(x)$ is well defined and continuously differentiable as a function of $(x,t)$ (see Appendix \ref{sec: app Invertibility of Characteristics Before Shock}) and $u(x,t) = \init((g^{(t)})^{-1}(x))$.
    Indeed, we are considering the solution $u$
    strictly before shock formation, in the sense that the characteristic map
    $g^{(t)}(\xi)$ satisfies
    \begin{equation*}
    \inf_{(\xi,t)\in I\times[0,T]}
    \partial_\xi g^{(t)}(\xi)
    =
    \inf_{(\xi,t)\in I\times[0,T]}
    \left(1+t f''(\init(\xi))\init'(\xi)\right)
    \ge \alpha>0.
    \end{equation*}
    Hence, $u\in C^1(I \times [0,T])$.
    Now, take $v(x,t) = G(u(x,t))$. Then, $v\in C^1(I\times [0,T])\cap L^\infty(I\times [0,T])$ and 
    \begin{equation*}
        v(g^{(t)}(\xi),t)=G(\init (\xi)) \quad \Longrightarrow \quad v_x(g^{(t)}(\xi),t)=\frac{G'(\init(\xi))\init'(\xi)}{1+tf''(\init(\xi))\init'(\xi)}.
    \end{equation*}
    Therefore, before characteristic crossing $1+tf''(\init(\xi))\init'(\xi)\geq \alpha>0$, and using our hypotheses together with the fact that $g^{(t)}$ is a diffeomorphism, we have that $v_x$ is uniformly bounded in $(x,t)$. In particular, $v(\cdot,t)$ is Lipschitz in $x$, uniformly in $t$.

    For such velocity, consider the equation
\begin{equation}\label{eq: bs}
    \begin{cases}
        u_t+(vu)_x=0\\
        u(\cdot,0)=\init
    \end{cases}
\end{equation}
In particular, $u$ is also solution of \eqref{eq: bs}.
Since, $v$ satisfies the hypotheses of the Lemma \ref{lem:santambrogio-lagrangian-first-derivative} then, by \cite[Theorem 4.4]{Santambrogio-OTAM},  $u$ is the unique solution of  \eqref{eq: bs}. 

By Lemma \ref{lem: hyp v cont}, we know that $\widehat u\in C^1([0,T];L^2(dr))$ and $\widehat u_t(\cdot,t)=v(\widehat{u}(\cdot,t),t)$ in $L^2(dr)$ for all $t\in [0,T]$.

Since $v_x=(G'\circ u) \, u_x$ and $v_t=-(G'\circ u) \, (f'\circ u) \, u_x=-(G'\circ u) \, (G\circ u  +(G'\circ u) \, u) \, u_x$,
we obtain
\begin{align*}
    \mathfrak a(x,t)&:=v_t(x,t)+v_x(x,t)v(x,t)\\
    &=-\frac{\left(u(x,t)f'(u(x,t))-f(u(x,t)) \right)^2}{u^3(x,t)}u_x(x,t)\\
    &=-(G'(u(x,t)))^2  u(x,t)  u_x(x,t).
\end{align*}
Therefore,
\begin{equation*}
    \mathfrak a(g^{(t)}(\xi),t)=-\frac{(G'(\init(\xi)))^2 \,   \init(\xi) \, \init'(\xi)}{1+tf''(\init(\xi))\init'(\xi)},
\end{equation*}
where we used that we know how to express the solution $u(x,t)$ in terms of characteristics, before shock formation (see \eqref{eq: implicit characteristic flow}), i.e., $u(t,x)=\init(x-tf'(u(t,x)))$
and therefore
     \begin{align*}
         &u_x(x,t)=\init'(x-tf'(u(x,t)))(1-tf''(u(x,t))u_x(x,t)) \quad \Rightarrow\\
         &u_x(x,t)=\frac{\init'(x-tf'(u(x,t)))}{1+tf''(u(x,t))\init'(x-tf'(u(x,t)))}.
     \end{align*}
So, by our hypotheses, $\mathfrak a\in  C(I\times [0,T])\cap L^\infty(I\times [0,T])$. 
Let $\mathbf{a}(t):=\mathfrak a(\widehat u(\cdot,t),t)$. 
Then, Lemma \ref{lem:hyp-a-cont} applies and we have that $\widehat u\in C^2([0,T];L^2(dr))$ with $\widehat u_{tt}(\cdot,t)
=
\mathbf{a}(t)$
in $L^2(dr)$.

Finally, let us compute the $L^2(dr)$-norm values of  $\widehat{u}_t(\cdot,t)$ and $\widehat{u}_{tt}(\cdot,t)$. For the hyperbolic case  $v=\frac{f\circ u}{u}$, \eqref{eq:norm-v} and \eqref{eq:norm-a} read as 
    \begin{align*}
        \|\widehat{u}_t(\cdot,t)\|_{L^2(dr)}^2&=\int \frac{f(u(x,t))^2}{u(x,t)}dx,\\
\|\widehat{u}_{tt}(\cdot,t)\|_{L^2(dr)}^2&=\int\frac{\left(u(x,t)f'(u(x,t))-f(u(x,t)) \right)^4}{u^5(x,t)}u_x^2(x,t)dx,
    \end{align*}
     where we have used that $u(x,t)\geq0$.
     Using again that we know how to express the solution $u(x,t)$ in terms of characteristics, before shock formation, one can use the substitution
     \begin{equation*}
         \begin{cases}
         \xi=x-tf'(u(x,t))\\
         \left(1+tf''(\init(\xi))\init  '(\xi)\right)d\xi=dx,
         \end{cases}
     \end{equation*}
     obtaining
     \begin{align*}
         \|\widehat{u}_t(\cdot,t)\|_{L^2(dr)}^2&=\int\frac{f(\init(\xi))^2}{\init(\xi)}(1+tf''(\init(\xi))\init '(\xi))d\xi\\
         &=\int \init(\xi) (G(\init(\xi)))^2(1+tf''(\init(\xi))\init '(\xi))d\xi\\
         \|\widehat{u}_{tt}(\cdot,t)\|_{L^2(dr)}^2&=\int \frac{(\init(\xi) f'(\init(\xi))-f(\init(\xi)))^4}{\init(\xi)^5} \frac{\init '(\xi)^2}{1+tf''(\init(\xi))\init  '(\xi)}d\xi\\
         &=\int\frac{\init(\xi)^3 (G'(\init(\xi)))^4(\init'(\xi))^2}{1+tf''(\init(\xi))\init '(\xi)}d\xi.
     \end{align*}
    Here the change of variables is performed using that $g^{(t)}$ is a diffeomorphism on the  whole line. Since the integrands vanish outside the support
of $\init$, which remains inside $I$ throughout $[0,T]$, the resulting
whole-line integrals reduce to the displayed integrals over $I$.
\end{proof}

In the next result, under the hypothesis of Proposition \ref{prop: u, u_t, u_tt}, we provide a precise pre-shock CDT Kolmogorov $n$-width bound.

\begin{theorem}\label{thm:hyperbolic-cdt-width}
    Let $\init \in C^1(I)$ with $\|\init\|_\infty < M$, $\|\init'\|_\infty < N$. Let $f\in C^2([-M,M])$ and assume $G(x):=f(x)/x$ extends to a function of class $C^1([-M,M])$. Let $u$ be the classical solution of \eqref{eq: hyp}
    on $I\times[0,T]$, with zero-flux boundary conditions, and strictly before characteristic crossing, i.e., assume $$1+tf''(\init(\xi))\init'(\xi)\geq \alpha>0 \qquad \text{for all } t\in[0,T], \,  \xi\in I.$$ 
    If $I=(a,b)$, assume in addition that the velocity $v(x,t)=G(u(x,t))$ satisfies the no-outflow condition $v(a,t)=v(b,t)=0$.
    Then $\widehat u \in C^2([0,T];L^2(dr))$ and
    \begin{equation*}
    d_n\big(\widehat{\mathcal M}_{[0,T]};L^2(dr)\big)
    \le
    \frac{C \, T^2}{8 \, \sqrt{\alpha} \, (n-1)^2} \qquad\text{for every } n\ge2,
    \end{equation*}
    where $C^2:=\int \tfrac{(\init(\xi) f'(\init(\xi))-f(\init(\xi)))^4 \init '(\xi)^2 }{\init(\xi)^5 } d\xi=\int \init(\xi)^3 (G'(\init(\xi)))^4(\init'(\xi))^2d\xi.$
\end{theorem}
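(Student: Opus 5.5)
The plan is to combine Proposition~\ref{prop: u, u_t, u_tt} with part~\ref{item:2-curve} of Lemma~\ref{lem:curve-width}, applied to the curve $\gamma(t):=\widehat u(\cdot,t)$ in $\mathcal H=L^2(dr)$ with $t_0=0$, so that $L=T$.

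First, the hypotheses of Theorem~\ref{thm:hyperbolic-cdt-width} coincide with those of Proposition~\ref{prop: u, u_t, u_tt}. These are: $\init\in C^1$ with the stated bounds, $f\in C^2$, and $G\in C^1$. They also include the pre-shock condition $1+tf''(\init)\init'\ge\alpha$, and the no-outflow condition when $I$ is bounded. Part (i) of the proposition therefore gives $\gamma\in C^2([0,T];L^2(dr))$, which is the first assertion of the theorem.

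Next, part (iii) of the proposition gives the explicit formula
\begin{equation*}
\|\widehat u_{tt}(\cdot,t)\|_{L^2(dr)}^2=\int\frac{\init(\xi)^3(G'(\init(\xi)))^4(\init'(\xi))^2}{1+tf''(\init(\xi))\init'(\xi)}\,d\xi .
\end{equation*}
The integrand's numerator is nonnegative, because $\init\ge0$ as a density. The pre-shock assumption bounds the denominator from below by $\alpha$ uniformly in $(\xi,t)\in I\times[0,T]$. Hence
\begin{equation*}
\|\widehat u_{tt}(\cdot,t)\|_{L^2(dr)}^2\le \frac{C^2}{\alpha}
\quad\text{for every } t\in[0,T],
\end{equation*}
and so $M_2:=\sup_{t\in[0,T]}\|\gamma''(t)\|\le C/\sqrt{\alpha}$. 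The equality between the two expressions for $C^2$ follows from the identity $uf'(u)-f(u)=u^2G'(u)$, which is already used in the proof of the proposition; raising it to the fourth power and dividing by $u^5$ gives $u^3(G')^4$.

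Substituting into \eqref{eq:bound-M2} with $L=T$ then gives
\begin{equation*}
d_n\le \frac{T^2M_2}{8(n-1)^2}\le\frac{C\,T^2}{8\sqrt{\alpha}\,(n-1)^2}\qquad\text{for } n\ge2 .
\end{equation*}

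The only points needing care are the following.
\begin{itemize}
\item $C<\infty$ must hold, so that the bound is not vacuous. This requires $\init^3(G'\circ\init)^4(\init')^2$ to be integrable. The integrand is bounded by $M^2\,\|G'\|_{\infty}^4N^2\,\init$, which is integrable because $\init$ has unit mass.
\item The sup in $M_2$ must be attained over the closed interval $[0,T]$. This is fine because the lower bound by $\alpha$ holds for all $t\in[0,T]$.
\end{itemize}
The main analytic work, namely the $C^2$ regularity of the CDT curve and the change of variables along characteristics, has already been done in Proposition~\ref{prop: u, u_t, u_tt}, so I expect no serious obstacle beyond these bookkeeping checks.
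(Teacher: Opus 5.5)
Your proposal is correct and is essentially the paper's proof. Proposition~\ref{prop: u, u_t, u_tt} supplies $\widehat u\in C^2([0,T];L^2(dr))$, and the pre-shock lower bound $\alpha$ on the Jacobian then gives $\|\widehat u_{tt}(\cdot,t)\|_{L^2(dr)}^2\le C^2/\alpha$; part~\ref{item:2-curve} of Lemma~\ref{lem:curve-width} with $L=T$ yields the estimate. Your extra checks are correct and make explicit what the paper leaves implicit: the identity $uf'(u)-f(u)=u^2G'(u)$ equates the two forms of $C^2$, and the integrand bound $M^2\|G'\|_\infty^4N^2\,\init$ shows $C<\infty$.
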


Notice that, as a direct consequence of this result, if in particular $f(u)=Au$, then $\init f'(\init)-f(\init)=0$ and so $\|\widehat u_{tt}(\cdot,t)\|_{L^2(dr)}=0$ obtaining 
\begin{equation*}
d_n\big(\widehat{\mathcal M}_{[0,T]};L^2(dr)\big)=0
\qquad\text{for every } n\ge2,
\end{equation*}
which coincides with \eqref{eq:kol-2-adv} in Section \ref{sec:linear-transport-eq}.

\begin{proof}
The proof is a direct consequence of Proposition \ref{prop: u, u_t, u_tt} 
in combination with part \ref{item:2-curve} from Lemma \ref{lem:curve-width} to obtain the Kolmogorov $n$-width estimate. Indeed, from Proposition \ref{prop: u, u_t, u_tt}, $\widehat u \in C^2([0,T];L^2(dr))$
with $$\|\widehat u_{tt}(\cdot,t)\|_{L^2(dr)}^2\leq \frac{C^2}{\alpha} \qquad \forall t\in [0,T]$$
for the constant $C$ given in the statement of the theorem. Therefore, by applying Lemma \ref{lem:curve-width}, we have, for every $n\ge2$,
\begin{equation*}
    d_n\big(\widehat{\mathcal M}_{[0,T]};L^2(dr)\big)\leq \frac{T^2}{8  (n-1)^2}\sup_{t\in[0,T]}\|\widehat u_{tt}(\cdot,t)\|_{L^2(dr)}
    \leq
    \frac{C \, T^2}{8 \, \sqrt{\alpha} \, (n-1)^2}.
\end{equation*}
\end{proof}

We remark that Theorems \ref{thm:hyperbolic-cdt-lipschitz} and \ref{thm:hyperbolic-cdt-width} should be read as upper bounds, not as exact width identities.

\paragraph{Formal comparison with native-space bounds.} 
Using the same Hilbert-valued curve estimates as in CDT space, we can
formally derive corresponding bounds in native space. Throughout this
comparison, we assume sufficient regularity and integrability for all the
derivatives below to belong to the stated $L^2$ spaces. We also assume dynamic before shock formation  (i.e., assume $1+tf''(\init(\xi))\init'(\xi)\geq \alpha>0$ for all $t\in[0,T]$, $\xi\in I$).

Consider the set of solutions $\mathcal{M}_{[0,T]}$ of equation \eqref{eq: hyp} with initial condition $\init$ as in \eqref{eq: sol manifold}, i.e., $
    \mathcal{M}_{[0,T]}:=\{u(\cdot,t): t\in [0,T]\}$.
Using that $u(g^{(t)}(\xi),t) = \init(\xi)$ and differentiating with respect to $\xi$ we get: 
\begin{align*}
    & u_x(g^{(t)}(\xi),t)  
    =
    \frac{\init'(\xi)}{g^{(t)}_\xi(\xi)}
    =
    \frac{\init'(\xi)}{1+tf''(\init(\xi))\init'(\xi)}
    , \\
    & u_{xx}(g^{(t)}(\xi),t) 
    = 
    \frac{\init''(\xi)g_\xi^{(t)}(\xi) - \init'(\xi) g_{\xi\xi}^{(t)}(\xi)}{[g_\xi^{(t)}(\xi)]^3} 
    =
    \frac{\init''(\xi) - tf'''(\init(\xi))\init'(\xi)^3}{[1+tf''(\init(\xi))\init'(\xi)]^3}.
\end{align*}
Since $u_t=-f'(u)u_x$,
a change of variables $x=g^{(t)}(\xi)$ gives
\begin{align*}
    \|u_t(\cdot,t)\|_{L^2(I)}^2 
    &= 
    \int \left(f'(u(x,t))u_x(x,t) \right)^2 \, dx= 
    \int \frac{\left(f'(\init(\xi))\init'(\xi) \right)^2}{1+tf''(\init(\xi))\init'(\xi)} \, d\xi\leq 
    \frac{C_0^2}{\alpha},
    \\
    \|u_{tt}(\cdot,t)\|_{L^2(I)}^2
    &=
\int \left[f'(u)\right]^4 u_{xx}^2 \, dx
+ 4 \int \left[f'(u)\right]^3 f''(u)\,u_{xx}u_x^2 \, dx 
+ 4 \int \left[f'(u)\right]^2
\left[f''(u)\right]^2 u_x^4 \, dx\\
&=\int
\frac{
\Bigl(
\left[f'(u_0)\right]^2
\left[
u_0''
-
t f'''(u_0)(u_0')^3
\right]
+
2 f'(u_0)f''(u_0)(u_0')^2
\left[
1+t f''(u_0)u_0'
\right]
\Bigr)^2
}{
\left[
1+t f''(u_0)u_0'
\right]^5
}
\, d\xi\\ 
    &\leq \frac{C_1 + C_2 t + C_3 t^2}{\alpha^5},
\end{align*}
where $C_0,C_1,C_2,C_3$ are nonnegative constants depending only on $f, \init$ and its derivatives. 
Consequently, Lemma~\ref{lem:curve-width} gives the following formal
native-space estimates:
\begin{enumerate}
    \item \label{item: usual 1} If $\init\in C^1(I)$ with $\|\init\|_\infty < M$, $\|\init'\|_\infty < N$, and $f\in C^2([-M,M])$, then, for every $n\geq 1$,
    \begin{equation}
        d_n\big(\mathcal M_{[0,T]};L^2(I)\big)\leq \frac{T}{2n}\sup_{t\in[0,T]}\|u_{t}(\cdot,t)\|_{L^2(I)}
        \leq
        \frac{C_0T}{  \sqrt{\alpha} 2 n},
    \end{equation}
    where $C_0$ depends only on $f,\init$ and its derivatives.
    \item \label{item: usual 2} If $\init\in C^2(I)$ with $\|\init\|_\infty < M$, $\|\init'\|_\infty < N_1$, $\|\init''\|_\infty < N_2$, and $f\in C^3([-M,M])$, then, for every $n\geq 2$, 
    \begin{equation}
        d_n\big(\mathcal M_{[0,T]};L^2(I)\big)\leq \frac{T^2}{8(n-1)^2}\sup_{t\in[0,T]}\|u_{tt}(\cdot,t)\|_{L^2(I)}
        \leq
        \frac{\sqrt{C_1+C_2T+C_3T^2} \, T^2}{8\, \alpha^{5/2} \, (n-1)^2}.
    \end{equation}
    where $C_1,C_2,C_3$ depend only on $f,\init$ and its derivatives.
\end{enumerate}

These estimates, as well as those in
Theorems~\ref{thm:hyperbolic-cdt-lipschitz}
and~\ref{thm:hyperbolic-cdt-width}, are upper bounds and need not be
sharp. Nevertheless, applying the same curve-approximation principle in
the two coordinate systems reveals two useful distinctions.

First, at the regularity level
$\init\in C^1$ and $f\in C^2$, compare
Theorem~\ref{thm:hyperbolic-cdt-width} with
Item~\ref{item: usual 1}. The transformed trajectory admits an order
$(n-1)^{-2}$ bound (under appropriate assumptions), whereas the corresponding native-space estimate
obtained from first temporal regularity has order $n^{-1}$.

Second, when second temporal derivatives are compared directly,
Theorem~\ref{thm:hyperbolic-cdt-width} and
Item~\ref{item: usual 2} both yield order $(n-1)^{-2}$ decay. Their
dependence on the pre-shock Jacobian is, however, different. Consequently, the derived transformed-space upper bound deteriorates like
$\alpha^{-1/2}$ as $\alpha\downarrow0$, while the corresponding
native-space $C^2$ bound deteriorates like $\alpha^{-5/2}$. This is a comparison of the available upper bounds in their respective
ambient norms, not a claim about the exact Kolmogorov widths.

The first-order estimate uses nearest-node approximation of the
trajectory, while the second-order estimate uses piecewise linear
interpolation. Additional temporal regularity alone does not improve these
rates if the same approximants are retained; higher-order estimates would
require higher-degree interpolation together with the corresponding
dimension count.

Suppressing constants depending on $T$, $f$, and $\init$, the
comparison can be summarized schematically as:
\begin{equation*}
    \begin{array}{c|c|c}
        &
        \init \in C^1,\ f\in C^2
        &
        \init \in C^2,\ f\in C^3
        \\
        \hline
        \text{Native}
        &
        \displaystyle O\left(\frac{1}{\sqrt{\alpha}n}\right)
        &
        \displaystyle O\left(\frac{1}{\alpha^{5/2}n^2}\right)
        \\
        \hline
        \text{CDT}
        &
        \multicolumn{2}{c}{
            \displaystyle
            \min\left\{
                O\left(\frac{1}{n}\right),
                O\left(\frac{1}{\sqrt{\alpha}n^2}\right)
            \right\}
        }
    \end{array}.
\]

In the CDT row, the minimum reflects the fact that two different estimates are available. The first estimate, of order $O(n^{-1})$, comes from the Lipschitz-in-time control of the CDT trajectory in Theorem~\ref{thm:hyperbolic-cdt-lipschitz} and remains meaningful through shock formation. The second estimate, of order $O(\alpha^{-1/2}n^{-2})$, is only a
pre-shock estimate from Theorem~\ref{thm:hyperbolic-cdt-width} assuming $u_0\in C^1$, $f\in C^2$. Whenever both hypotheses are satisfied, both bounds hold simultaneously, and therefore the sharper available upper bound is their minimum. Thus the $O(n^{-1})$ estimate may be preferable close to shock formation, when $\alpha$ is small, while the $O(\alpha^{-1/2}n^{-2})$ estimate gives faster decay in $n$ away from shock formation.

Overall, comparing the Kolmogorov bounds in Native vs CDT space based on regularity of $\init$ and $f$ we get faster decay when $n \to \infty$ and slower growth when $\alpha \to 0$ (i.e. near shock times) and with less regularity assumptions when using CDT coordinates.

\clearpage
\subsection{Numerical examples}\label{sec: numerics hyperbolic case}

In this section we perform the numerical experiments described at the end of Section \ref{sec:CDT-POD-method} for several hyperbolic equations.

\paragraph{Inviscid Burgers.}

    {The inviscid Burgers equation
    \begin{align}\label{eq:burgers}
        u_t + \left(\frac{u^2}{2}\right)_x = 0
    \end{align}
    models nonlinear self-transport of a scalar quantity $u$: the quantity is carried by a flow whose speed is the quantity itself. So where $u$ is larger, that part of the profile moves faster; where $u$ is smaller, it moves slower. This makes the front of a wave steepen, and if the slope is negative somewhere initially, characteristics intersect and a shock forms in finite time. Rather than modeling a specific physical problem, this equation was introduced as the simplest prototype capturing key features of hyperbolic equations, both theoretically and numerically.}
    
    \paragraph{High-Fidelity Model  (HFM).}To solve \eqref{eq:burgers}, a hyperbolic finite-volume HFM was implemented using the exact first-order Godunov flux, periodic boundary conditions, and CFL-controlled adaptive time stepping, with CFL number $0.95$. The computational domain was $(x,t)\in[0,30]\times[0,0.5]$, discretized using 1500 spatial points and 200 output times. The initial condition was constructed from a function $g$ made from a  reproducible mixture of five Gaussian functions, tapered and buffered to localize its spatial variation.  After normalization, it was scaled and shifted according to
    \begin{equation}\label{eq: init burgers}
        u_0(x)=A g(x)+0.1.
    \end{equation}
    The experiment was restricted to times before the solution interacted with the boundary. Consequently, mass was conserved, and the dynamics could be regarded as effectively occurring on the full line.

    \paragraph{Experimental setup.}Results of numerical experiments for  initial condition \eqref{eq: init burgers} with  $A=10$ are shown in Fig. $\ref{fig: burgers_three_rows}$. In Fig. $\ref{fig: burgers_more_transport}$, we compare the evolution corresponding to two scaled initial conditions, with $A=3.3$ and $A=10$. This changes the maximum characteristic speed,
    \begin{equation*}
    \max_x f'(u_0)=\|u_0\|_\infty,
    \end{equation*}and therefore the amount of mass displacement over the prescribed time interval. Increasing $A$ also strengthens nonlinear steepening and reduces the shock-formation time, given for smooth initial data by
    \begin{equation*}
    t_s=-\frac{1}{\min_x u_0'(x)},
    \end{equation*}provided that $\min_x u_0'(x)<0$. These cases illustrate how increased transport and earlier shock formation affect the Eulerian and CDT reduced-order models.

    \begin{figure}[ht!]
        \centering
    
        \begin{subfigure}{\textwidth}
            \centering
            \begin{minipage}{0.01\textwidth}
                \centering
                \subcaption{}
            \end{minipage}
            \begin{minipage}{0.98\textwidth}
                \centering
                \includegraphics[width=\figwidth]{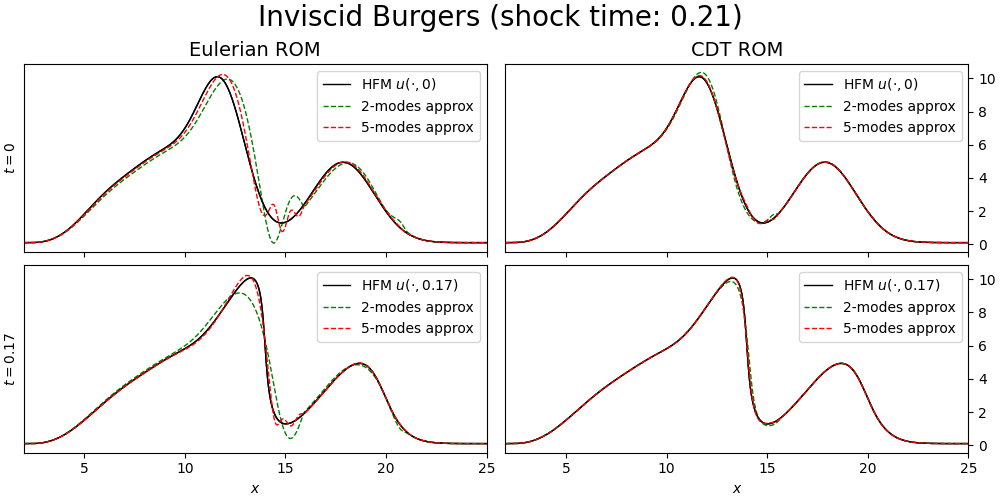}
            \end{minipage}
        \end{subfigure}
    
        \vspace{0.3cm}
    
        \begin{subfigure}{\textwidth}
            \centering
            \begin{minipage}{0.01\textwidth}
                \centering
                \subcaption{}
            \end{minipage}
            \begin{minipage}{0.98\textwidth}
                \centering
                \includegraphics[width=\figwidth]{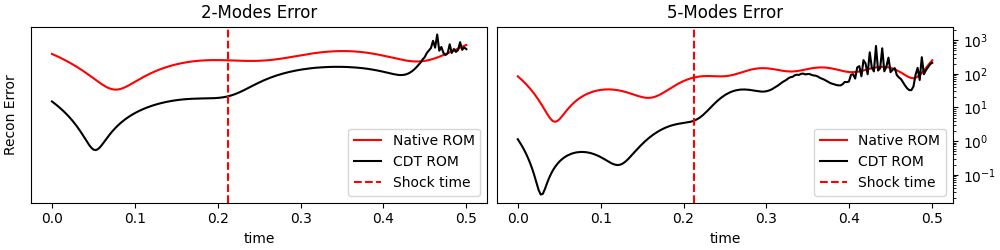}
            \end{minipage}
        \end{subfigure}
    
        \vspace{0.3cm}
    
        \begin{subfigure}{\textwidth}
            \centering
            \begin{minipage}{0.01\textwidth}
                \centering
                \subcaption{}
            \end{minipage}
            \begin{minipage}{0.98\textwidth}
                \centering
                \includegraphics[width=\figwidth]{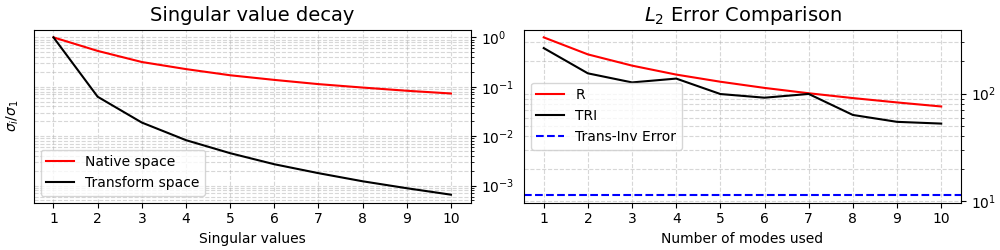}
            \end{minipage}
        \end{subfigure}
    
        \caption{%
            Inviscid Burgers equation: comparison between native Euclidean ROM and CDT-ROM. (a) The top panels show high-fidelity solutions and rank-$r$ (for $r=2$ and $r=5$) reconstructions at selected times. 
            (b) The middle panels show the time-dependent $L^2$ reconstruction error for $r=2$ and $r=5$, together with the shock time. 
            (c) The bottom-left panel compares the singular value decay of the native and transformed snapshot matrices. The bottom-right panel compares the global $L^2$ reconstruction error as a function of the number of retained modes and the transform/inverse-transform error floor (independent of the number of modes).
        }
        \label{fig: burgers_three_rows}
    \end{figure}

        \begin{figure}[ht!]
        \centering
    
        \begin{subfigure}{\textwidth}
            \centering
            \begin{minipage}{0.01\textwidth}
                \centering
                \subcaption{}
            \end{minipage}
            \begin{minipage}{0.98\textwidth}
                \centering
                \includegraphics[width=\figwidth]{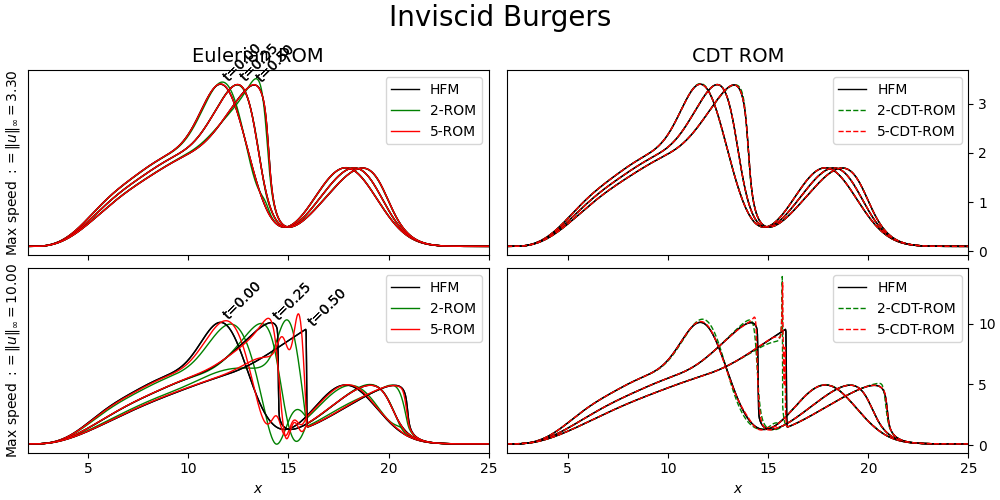}
            \end{minipage}
        \end{subfigure}
    
        \vspace{0.3cm}
    
        \begin{subfigure}{\textwidth}
            \centering
            \begin{minipage}{0.01\textwidth}
                \centering
                \subcaption{}
            \end{minipage}
            \begin{minipage}{0.98\textwidth}
                \centering
                \includegraphics[width=\figwidth]{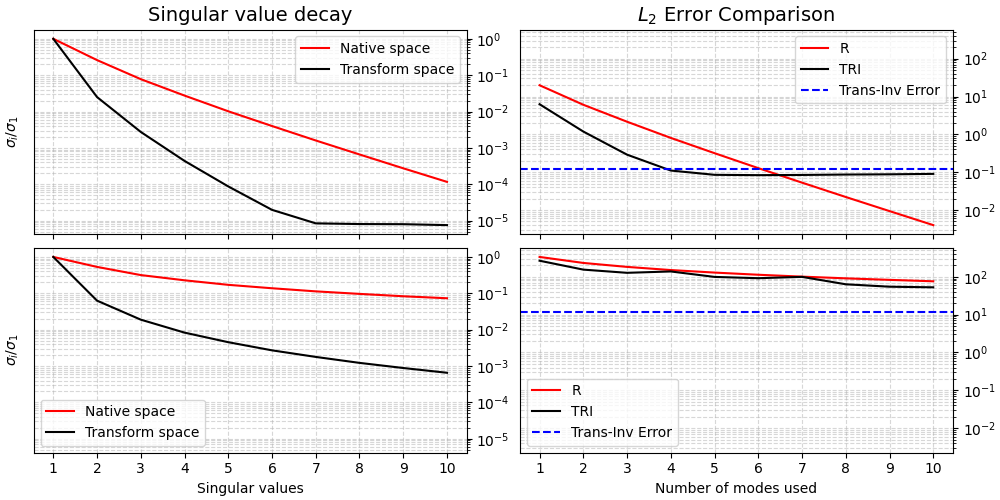}
            \end{minipage}
        \end{subfigure}
        
        \caption{%
            Inviscid Burgers equation for two regimes with different maximum characteristic speeds. Each row corresponds to one regime. The top panels compare representative reconstructions, while the bottom panels compare singular value decay and rank-dependent $L^2$ reconstruction errors. The results show that the CDT representation becomes increasingly effective as displacement-dominated dynamics become stronger.
        }
        \label{fig: burgers_more_transport}
    \end{figure}
    \clearpage

\paragraph{Traffic-Flow.}
    Traffic flow can be viewed as the transport of a conserved number of cars along a road. As density increases, drivers slow down, so the flow is small in both nearly empty and heavily congested traffic, and largest at intermediate density. Under this assumption, the traffic density $u(x,t)$ is modeled by
    \begin{align} \label{eq:traffic}
        u_t + \left(u(1-u)\right)_x = 0 
    \end{align}

    \paragraph{High-Fidelity Model (HFM).}To solve \eqref{eq:traffic}, a hyperbolic finite-volume HFM was implemented using a first-order Godunov solver with periodic numerical fluxes and CFL-controlled adaptive time stepping with CFL number $0.45$. The computational domain was $(x,t)\in[0,10]\times[0,1]$, discretized using 4500 spatial points and 1500 output times. The initial condition was generated as a reproducible mixture of five Gaussian functions and tapered and buffered so that its support remained away from the periodic boundary. Thus, mass was conserved, and the dynamics could be regarded as effectively occurring on the full line.

    \paragraph{Experimental setup.}Results of numerical experiments for a normalized initial profile scaled to satisfy $\|u_0\|_\infty=0.25$ are shown in Fig. $\ref{fig: traffic_three_rows}$. In Fig. $\ref{fig: traffic_more_transport}$, we compare the evolution corresponding to two scaled versions of the same initial profile. The amplitudes were selected so that the minimum characteristic speeds were
    \begin{equation*}
    \min_x f'(u_0)
    =1-2\|u_0\|_\infty
    \in\{0.25,0.75\}.
    \end{equation*}Equivalently, the corresponding initial amplitudes were $\|u_0\|_\infty=0.375$ and $0.125$, respectively. Since the characteristic speed is
    \begin{equation*}
    f'(u)=1-2u,
    \end{equation*}the larger-amplitude initial condition contains slower-moving high-density regions and produces greater deformation and relative displacement across the profile. These cases illustrate how changes in transport speed and nonlinear wave evolution affect the Eulerian and CDT reduced-order models.

    \begin{figure}[ht!]
        \centering
    
        \begin{subfigure}[ht!]{\textwidth}
            \centering
            \begin{minipage}{0.01\textwidth}
                \centering
                \subcaption{}
            \end{minipage}
            \begin{minipage}{0.98\textwidth}
                \centering
                 \includegraphics[width=\figwidth]{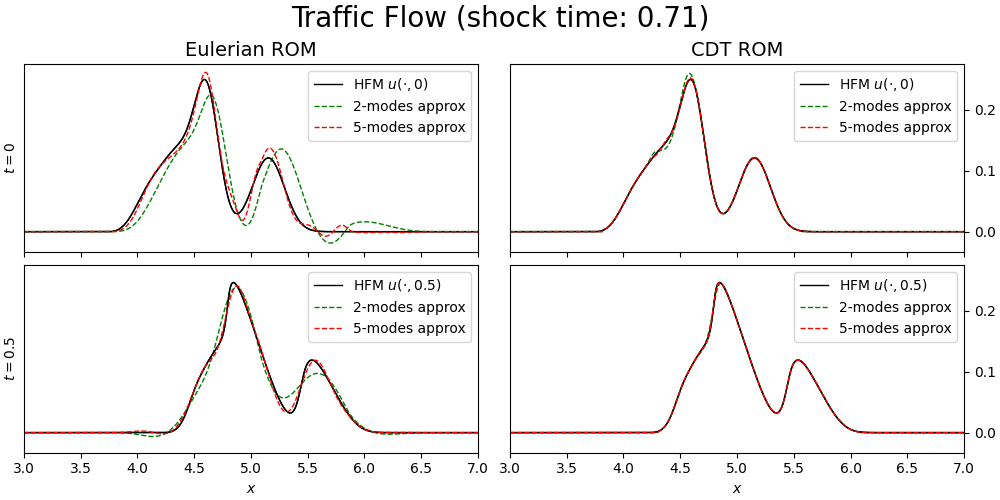}
            \end{minipage}
        \end{subfigure}
    
        \vspace{0.3cm}
    
        \begin{subfigure}[ht!]{\textwidth}
            \centering
            \begin{minipage}{0.01\textwidth}
                \centering
                \subcaption{}
            \end{minipage}
            \begin{minipage}{0.98\textwidth}
                \centering
                \includegraphics[width=\figwidth]{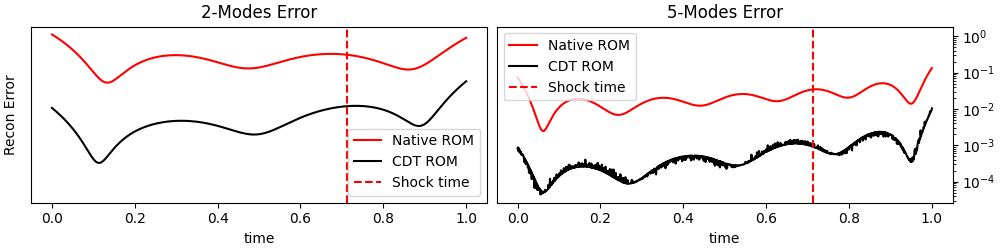}
            \end{minipage}
        \end{subfigure}
    
        \vspace{0.3cm}
    
        \begin{subfigure}[ht!]{\textwidth}
            \centering
            \begin{minipage}{0.01\textwidth}
                \centering
                \subcaption{}
            \end{minipage}
            \begin{minipage}{0.98\textwidth}
                \centering
                 \includegraphics[width=\figwidth]{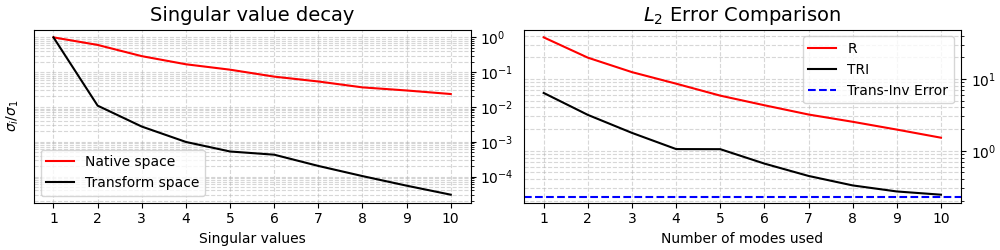}
            \end{minipage}
        \end{subfigure}
    
        \caption{%
            Traffic Flow equation: comparison between native Euclidean ROM and CDT-ROM. (a) The top panels show high-fidelity solutions and rank-$r$ (for $r=2$ and $r=5$) reconstructions at selected times. 
            (b) The middle panels show the time-dependent $L^2$ reconstruction error for $r=2$ and $r=5$, together with the shock time. 
            (c) The bottom-left panel compares the singular value decay of the native and transformed snapshot matrices. The bottom-right panel compares the global $L^2$ reconstruction error as a function of the number of retained modes and the transform/inverse-transform error floor (independent of the number of modes).
        }
        \label{fig: traffic_three_rows}
    \end{figure}

        \begin{figure}[ht!]
        \centering
    
        \begin{subfigure}{\textwidth}
            \centering
            \begin{minipage}{0.01\textwidth}
                \centering
                \subcaption{}
            \end{minipage}
            \begin{minipage}{0.98\textwidth}
                \centering
                \includegraphics[width=\figwidth]{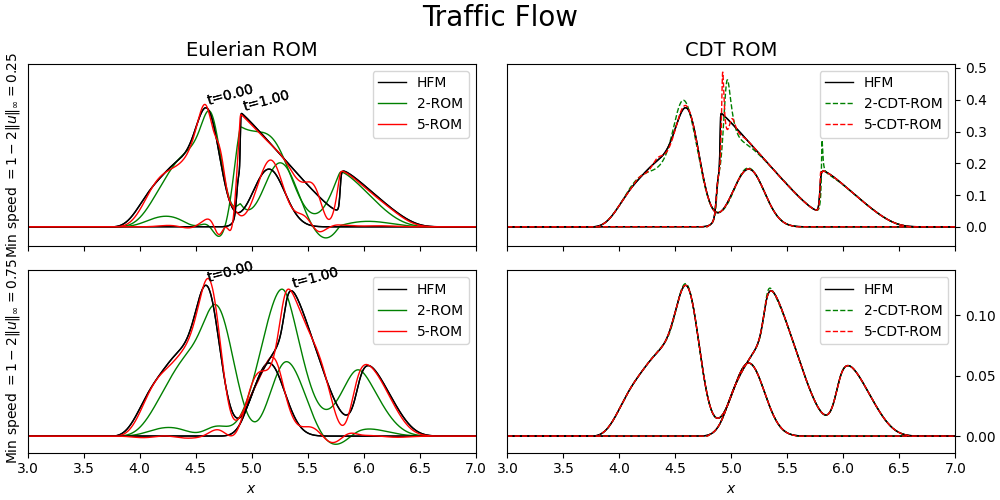}
            \end{minipage}
        \end{subfigure}
    
        \vspace{0.3cm}
    
        \begin{subfigure}[ht!]{\textwidth}
            \centering
            \begin{minipage}{0.01\textwidth}
                \centering
                \subcaption{}
            \end{minipage}
            \begin{minipage}{0.98\textwidth}
                \centering
               \includegraphics[width=\figwidth]{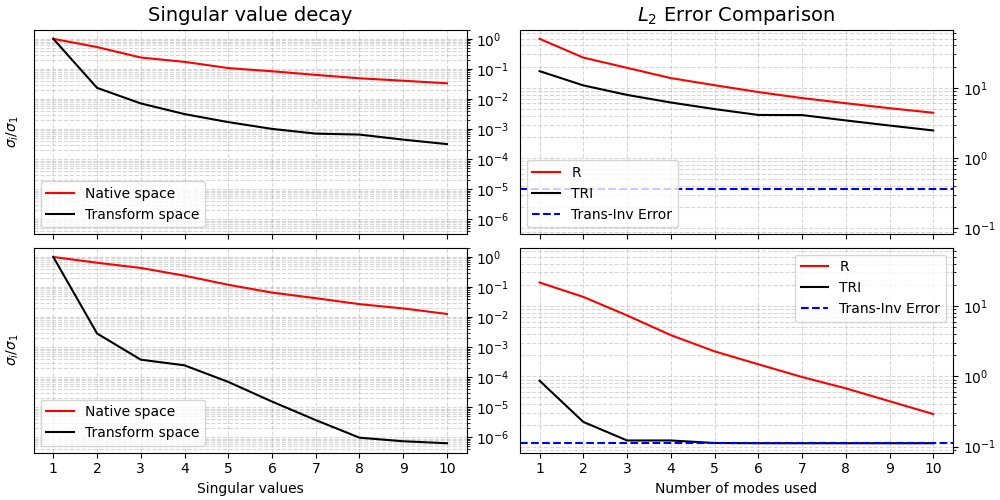}
            \end{minipage}
        \end{subfigure}
        
        \caption{%
            Traffic Flow equation for two regimes with different minimum characteristic speeds. Each row corresponds to one regime. The top panels compare representative reconstructions, while the bottom panels compare singular value decay and rank-dependent $L^2$ reconstruction errors. The results show that the CDT representation becomes increasingly effective as displacement-dominated dynamics become stronger.
        }
        \label{fig: traffic_more_transport}
    \end{figure}
    \clearpage

\paragraph{Buckley-Leverett.}

     {The Buckley-Leverett model describes the displacement of one incompressible, immiscible fluid by another in a porous medium, such as water pushing oil through reservoir rock. The unknown $u(x,t)$ represents the wetting-phase saturation, and its evolution is governed by mass conservation together with a nonlinear fractional-flow law. Under standard simplifying assumptions, the problem reduces to a scalar conservation law:
        \begin{align}\label{eq: buckley_leverett}
            \begin{cases}
                u_t + \left(\frac{u^2}{u^2+(1-u)^2}\right)_x = 0 \\
                 u_x(a,t) = 0 = u_x(b,t),\,\,\,\,  
            \end{cases}
        \end{align}
     whose nonlinear flux determines the propagation of saturation fronts, including shocks and rarefactions. The homogeneous Neumann boundary conditions $u_x(a,t) = 0 = u_x(b,t)$ impose zero saturation gradient at the two boundaries, so no artificial boundary variation is introduced there. In this sense, they serve as a simple numerical closure for the finite interval.}

     \paragraph{High-Fidelity Model (HFM).}To solve \eqref{eq: buckley_leverett}, a hyperbolic finite-volume HFM was implemented using a first-order Godunov-type numerical flux and CFL-controlled time stepping with CFL number $0.45$. The boundary numerical fluxes were set to zero. This numerical condition is not equivalent to a homogeneous Neumann condition in general. In the present experiments, however, the initial condition was compactly supported away from the boundaries, and the computational domain and final time were selected so that the solution remained negligible near them. Consequently, $u=0$, $u_x=0$, and the physical flux $f(u)=0$ were simultaneously satisfied at the boundaries throughout the simulated interval, making the numerical treatment consistent with the theoretical homogeneous Neumann conditions for this setup.
    The computational domain was $(x,t)\in[0,30]\times[0,1]$, discretized using 1500 spatial points and 200 output times. Initial saturations were generated as a reproducible mixture of five Gaussian functions, tapered and buffered to obtain compactly supported data, and subsequently normalized.

    \paragraph{Experimental setup.}
    Results for the normalized initial condition, satisfying $\|u_0\|_\infty=1$, are shown in Fig. $\ref{fig: buc_lev_three_rows}$. 
    In Fig. $\ref{fig: buc_lev_more_transport}$, we compare the evolution corresponding to two scaled versions of the same initial profile, with $\|u_0\|_\infty=0.33$ and $0.66$. Changing the amplitude modifies the characteristic velocity
    \begin{equation*}
    f'(u)=
    \frac{2u(1-u)}
    {\left(u^2+(1-u)^2\right)^2}.
    \end{equation*}
    These examples illustrate how differences in nonlinear wave propagation and mass displacement affect the Eulerian and CDT reduced-order models.

    \begin{figure}[ht!]
        \centering
    
        \begin{subfigure}{\textwidth}
            \centering
            \begin{minipage}{0.01\textwidth}
                \centering
                \subcaption{}
            \end{minipage}
            \begin{minipage}{0.98\textwidth}
                \centering
                 \includegraphics[width=\figwidth]{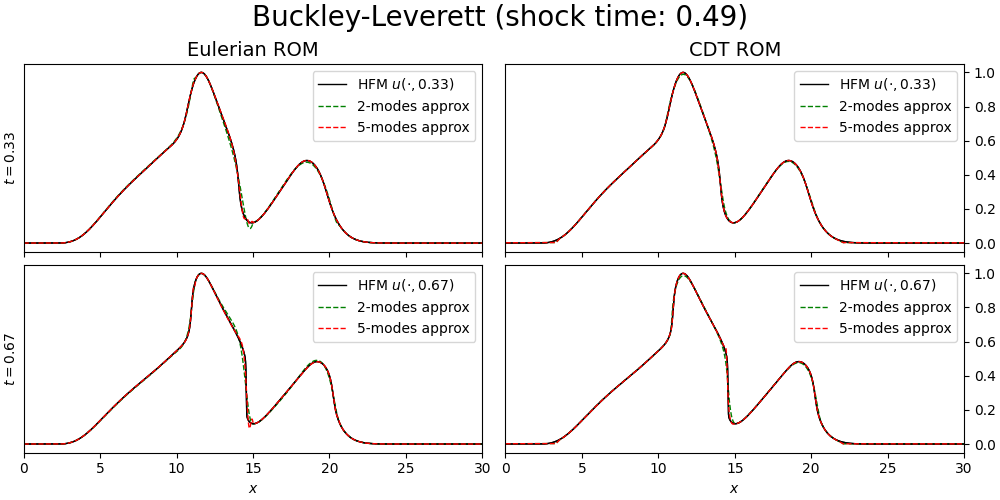}
            \end{minipage}
        \end{subfigure}
    
        \vspace{0.3cm}
    
        \begin{subfigure}{\textwidth}
            \centering
            \begin{minipage}{0.01\textwidth}
                \centering
                \subcaption{}
            \end{minipage}
            \begin{minipage}{0.98\textwidth}
                \centering
                \includegraphics[width=\figwidth]{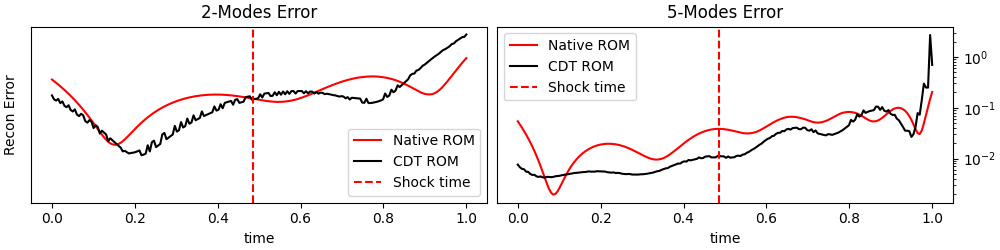}
            \end{minipage}
        \end{subfigure}
    
        \vspace{0.3cm}
    
        \begin{subfigure}{\textwidth}
            \centering
            \begin{minipage}{0.01\textwidth}
                \centering
                \subcaption{}
            \end{minipage}
            \begin{minipage}{0.98\textwidth}
                \centering
                \includegraphics[width=\figwidth]{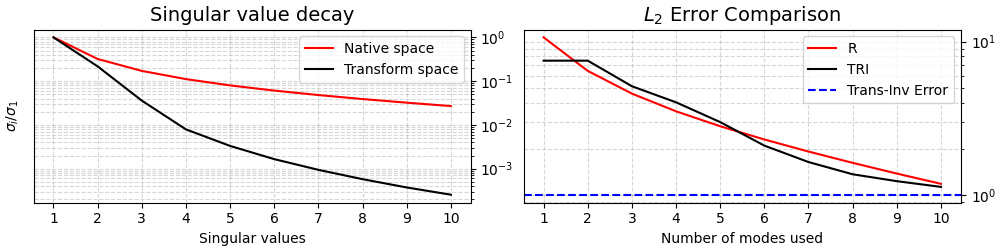}
            \end{minipage}
        \end{subfigure}
    
        \caption{%
            Buckley-Leverett equation: comparison between native Euclidean ROM and CDT-ROM. (a) The top panels show high-fidelity solutions and rank-$r$ (for $r=2$ and $r=5$) reconstructions at selected times. 
            (b) The middle panels show the time-dependent $L^2$ reconstruction error for $r=2$ and $r=5$, together with the shock time. 
            (c) The bottom-left panel compares the singular value decay of the native and transformed snapshot matrices. The bottom-right panel compares the global $L^2$ reconstruction error as a function of the number of retained modes and the transform/inverse-transform error floor (independent of the number of modes).
        }
        \label{fig: buc_lev_three_rows}
    \end{figure}

        \begin{figure}[ht!]
        \centering
    
        \begin{subfigure}{\textwidth}
            \centering
            \begin{minipage}{0.01\textwidth}
                \centering
                \subcaption{}
            \end{minipage}
            \begin{minipage}{0.98\textwidth}
                \centering
                 \includegraphics[width=\figwidth]{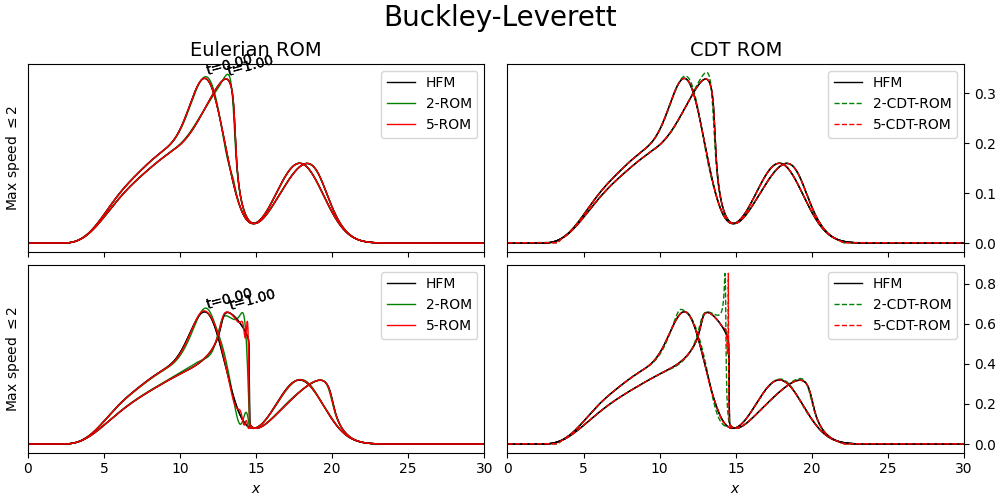}
            \end{minipage}
        \end{subfigure}
    
        \vspace{0.3cm}
    
        \begin{subfigure}{\textwidth}
            \centering
            \begin{minipage}{0.01\textwidth}
                \centering
                \subcaption{}
            \end{minipage}
            \begin{minipage}{0.98\textwidth}
                \centering
              \includegraphics[width=\figwidth]{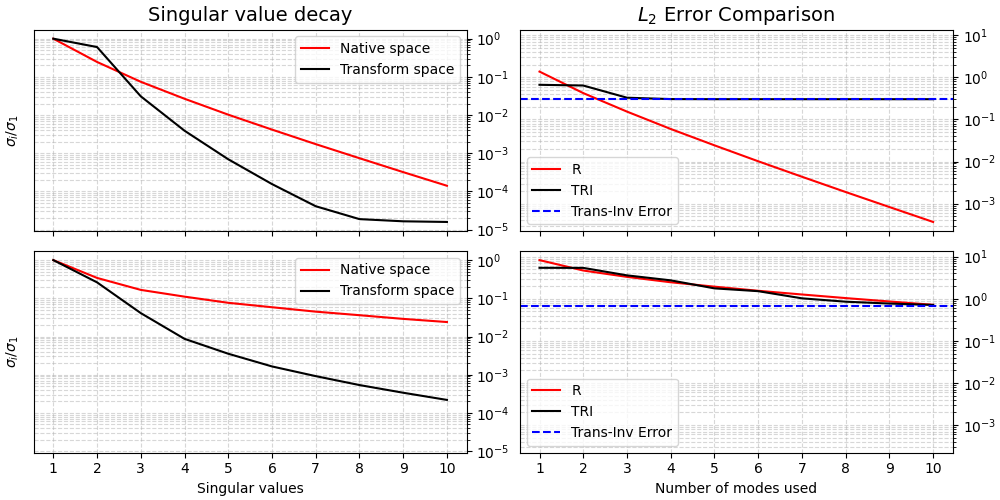}
            \end{minipage}
        \end{subfigure}
        
        \caption{%
            Buckley-Leverett equation for two regimes with different maximum characteristic speeds. Each row corresponds to one regime. The top panels compare representative reconstructions, while the bottom panels compare singular value decay and rank-dependent $L^2$ reconstruction errors. The results show that the CDT compression becomes increasingly effective as displacement-dominated dynamics become stronger. Nevertheless, the transform-inverse reconstruction error generates a bottleneck that prevents the effectiveness to be directly translated to native domain. 
        }
        \label{fig: buc_lev_more_transport}
    \end{figure}
    
    \clearpage

\section{Advection-Diffusion} \label{sec:adr-widths}

\subsection{Kolmogorov Width Bounds in CDT Space}\label{sec: kol adr}

Throughout this section we consider the linear advection-diffusion equation. Given $D>0$ and $A\in\RR$, let
\begin{equation}\label{eq:AD}
    u_t + A\,u_x \;=\; D\,u_{xx}, \qquad u(\cdot,0)=\init, 
\end{equation}
for $x \in I$ and $t\in (0,T]$, where $I\subset\mathbb R$ is an
interval, possibly $I=\mathbb R$. We  assume that the initial data $u(\cdot,0)=\init$ is a probability density with finite second moment, that is, $\init\ge0$ with total mass $\int \init=1$ and $\int x^2\init(x)dx<\infty$. 
When working in a bounded interval $I=(a,b)$, to guarantee mass-conservation at all times, we assume zero-flux boundary conditions given by $Au(a,t) - Du_{x}(a,t) = Au(b,t) - Du_{x}(b,t) = 0$ for all $t$.

For a fixed reference density $r$,  consider the set
$$\widehat \CalM_{[t_0,T]} = \{\widehat u(\cdot, t):\, t\in [t_0,T]\}\subset L^2(dr),$$
where $u$ is a solution of \eqref{eq:AD}, on the time interval $[t_0,T]$ for some $t_0\geq 0$, with initial state $\init$.
Our following results provide  Kolmogorov $2$-width bounds for $\widehat \CalM_{[t_0,T]}$. We start with Theorem \ref{thm:ADR-CDT-widths} and then add stronger regularity assumptions to obtain Theorem \ref{thm:ADR-CDT-widths-R}.

\begin{theorem}\label{thm:ADR-CDT-widths}
Assume $I=\RR$ and let $r>0$ be a fixed reference density on $\RR$, and let $u(\cdot,t)$ be the solution to \eqref{eq:AD}. 
Then,
$$d_2 (\widehat{\mathcal M}_{[0,T]};L^2(dr))\ \le\ \sqrt{2DT} . $$
\end{theorem}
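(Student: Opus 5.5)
The plan is to compare the CDT trajectory with the pure-transport trajectory and use the CDT isometry \eqref{eq:cdt-isometry}. Let $w(x,t):=\init(x-At)$ be the solution of pure advection with the same initial datum. By translation equivariance (Corollary~\ref{cor:cdt-translation-dilation}), $\widehat w(\cdot,t)=\widehat{\init}+At\,\mathbf 1$. Every such map lies in the two-dimensional space $V:=\operatorname{span}\{\widehat{\init},\mathbf 1\}\subset L^2(dr)$, exactly as in \eqref{eq:transf-manifold-transport}. Hence
\begin{equation*}
d_2(\widehat{\mathcal M}_{[0,T]};L^2(dr))\le \sup_{t\in[0,T]}\operatorname{dist}(\widehat u(\cdot,t),V)\le \sup_{t\in[0,T]}\|\widehat u(\cdot,t)-\widehat w(\cdot,t)\|_{L^2(dr)}=\sup_{t\in[0,T]}W_2(u(\cdot,t),w(\cdot,t)).
\end{equation*}
The whole proof therefore reduces to the estimate $W_2(u(\cdot,t),w(\cdot,t))\le\sqrt{2Dt}$.

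To obtain this estimate I will use the explicit structure of the solution on $\mathbb R$. The solution is $u(\cdot,t)=w(\cdot,t)*\rho_t$, where $\rho_t$ is the centered heat kernel with variance $2Dt$. The Gaussian solution of \eqref{eq: conv diff} shows that $\rho_t$ is the heat kernel once the drift $At$ is removed; the drift is already carried by $w$.

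Probabilistically, let $X\sim w(\cdot,t)$ and $Z\sim\rho_t$ be independent. Then $X+Z\sim u(\cdot,t)$, so $(X,X+Z)$ is a coupling of $w(\cdot,t)$ and $u(\cdot,t)$. Its transport cost is $\mathbb E|Z|^2=2Dt$. Since $W_2$ is the minimum over all couplings, $W_2(u(\cdot,t),w(\cdot,t))^2\le 2Dt\le 2DT$, which is the claim.

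I also need $u(\cdot,t)\in\mathcal P_2(\mathbb R)$, so that the CDTs lie in $L^2(dr)$. This holds because the second moment of a convolution is finite when both factors have finite second moments. I also note that $r>0$ makes $F_r$ strictly increasing, so that $\widehat w$ is genuinely the translate of $\widehat{\init}$ $r$-a.e.

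The main obstacle is the identification $u(\cdot,t)=\init(\cdot-At)*\rho_t$. This requires uniqueness of the solution of \eqref{eq:AD} in a class containing the physically relevant one, namely the Fourier or heat-kernel representation for $L^1$ probability data. I would take this representation as the definition of the solution on $\mathbb R$, which is standard.

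If a dynamic argument were preferred instead, I would write $u$ as solving the continuity equation with velocity $A-Du_x/u$ and use the Benamou--Brenier formula. That route needs Fisher-information bounds and only gives $O(\sqrt{DT})$ after an entropy estimate. The coupling argument avoids all of this and gives the sharp constant $\sqrt{2DT}$ directly.
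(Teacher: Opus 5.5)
Your proposal is correct and follows essentially the same route as the paper. The paper also bounds $d_2$ by the distance from $\widehat u(\cdot,t)$ to $\widehat{u_0}+At\in\operatorname{span}\{\widehat{u_0},\mathbf 1\}$, converts this via the CDT isometry to $W_2\bigl(u(\cdot,t),u_0(\cdot-At)\bigr)$, and bounds that with the coupling $Y_t=X_0+At$, $X_t=X_0+At+\sqrt{2D}\beta_t$; this coupling is identical in law to your independent-Gaussian coupling with cost $2Dt$.
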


Notice that as $D\to 0$ we recover the 2-Kolmogorov width $d_2 = 0$ of the transport equation. 

\begin{proof}
    Recall that
    \begin{align*}
        d_2(\widehat{\mathcal{M}}_{[0,T]};L^2(dr)) := \inf_{\substack{V\subset L^2(dr))\\ \dim V\le 2}} \ \sup_{t\in[0,T]}\inf_{\widehat v\in V} \|\widehat u(\cdot,t)-\widehat v\|_{L^2(dr)}.
    \end{align*}
    Take a fixed two-dimensional subspace $V := \mathrm{span}\{\widehat \init,\mathbf{1}\}$. Then, for every $t\in [0,T]$, $\widehat v(\cdot, t) := \widehat \init + At \in V$, and 
    \begin{align} \label{eq:d2-CDT-bound}
        d_2(\widehat{\mathcal{M}}_{[0,T]} ;L^2(dr)) 
        &\leq \sup_{t\in[0,T]} \|\widehat u(\cdot,t)-(\widehat \init + At )\|_{L^2(dr))} = \sup_{t\in[0,T]}  W_2\left(u(\cdot,t),\init(\cdot-At)\right).
    \end{align}
    Now take a random variable $X_0$ with law $X_0\sim \init$ and let $(\beta_t)_{t\geq 0}$ be a standard one-dimensional Brownian motion independent of $X_0$. We define the processes $X_t := X_0 +At +\sqrt{2D}\beta_t$, where $\beta_t$ is a standard one-dimensional Brownian motion ($\beta_t\sim \mathcal{N}(0,t)$), $Y_t := X_0 + At$
    with probability laws $X_t \sim u(\cdot,t)$, $Y_t \sim \init(\cdot-At)$. 
    Thus we can bound
    \begin{align*}
        W_2^2\left(u(\cdot,t),\init(\cdot-At)\right) 
        &\leq \mathbb{E}_{(X_t,Y_t)} \left((X_t-Y_t)^2\right)=\mathbb{E}((\sqrt{2D}\beta_t)^2) = 2D t.
    \end{align*}
    Combining this with \eqref{eq:d2-CDT-bound} we obtain $d_2(\widehat {\mathcal{M}}_{[0,T]};L^2(dr)) \leq \sqrt{2DT}$.    
\end{proof}

\begin{lemma}\label{lem: hat tt C2}
    Let $D>0$, $A\in\mathbb R$, and let $u$ solve the advection-diffusion equation \eqref{eq:AD} on $\mathbb R\times(0,T]$, with
    $u(\cdot,0)=u_0$.
    Let $r>0$ be a reference density on $\RR$ and consider $\widehat u$ the CDT with respect to $r$. If $u_0\in L^1(\RR)$, then $\widehat u (\xi,t)$ is well defined for $(\xi,t) \in \RR\times[0,T]$. Moreover, for every $\xi$, the application $t\mapsto \widehat u(\xi,t) \in C^2([t_0,T])$ for every $t_0>0$, with 
    \begin{equation}
\widehat u_t(\xi,t)
    =
    \left.\left(
        A-D\frac{u_x}{u}
    \right)\right|_{(\widehat u(\xi,t),t)}
\quad \text{and} \quad
    \widehat u_{tt}(\xi,t)
    =
    -D^2
    \left.\left(
        \frac{u_{xxx}}{u}
        -
        2\frac{u_xu_{xx}}{u^2}
        +
        \frac{u_x^3}{u^3}
    \right)\right|_{(\widehat u(\xi,t),t)}.
    \end{equation}
    If, additionally, $u_0$ satisfies the hypotheses of Lemma \ref{lem: extra regularity diffusion}, then we can include $t_0=0$.
\end{lemma}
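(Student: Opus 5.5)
We will use the explicit heat-kernel representation of the solution, $u(x,t)=\int_{\RR}\Gamma_{Dt}(x-At-y)\,\init(y)\,dy$ with $\Gamma_s(z)=(4\pi s)^{-1/2}e^{-z^2/(4s)}$. From it we derive the regularity of $u$ and then apply the implicit-function machinery of Section~\ref{sec: extra reg theory}.

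\textbf{Step 1: well-definedness.} For $t>0$ the convolution is smooth in $(x,t)$ and strictly positive, because $\init\ge0$ has unit mass and the Gaussian is positive. It also has unit mass for every $t$. Lemma~\ref{lem: u_hat_well_defined} therefore gives that $\widehat u(\xi,t)$ is uniquely defined by $F_u(\widehat u(\xi,t),t)=F_r(\xi)$; since $r>0$, we have $F_r(\xi)\in(0,1)$. At $t=0$ we take the CDT of Definition~\ref{def: CDT}.

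\textbf{Step 2: hypotheses of Lemmas~\ref{lem: differentiability of F_u(x,t)} and~\ref{lem: F_u deriv wrt t} on $[t_0,T]$.} For $t\ge t_0>0$ every derivative $\partial_t^k\partial_x^m u$ is a convolution of $\init\in L^1$ with derivatives of $\Gamma_{Dt}$. These kernels are bounded in $L^1\cap L^\infty$ uniformly for $t\in[t_0,T]$, and they depend continuously on $t$ in $L^1$. Hence $t\mapsto u_t(\cdot,t)\in C([t_0,T];L^1)$, and likewise for $u_{tt}$.

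This gives differentiability together with $\widehat u_t=-\int_{-\infty}^x u_t\,dy/u$ evaluated at $x=\widehat u(\xi,t)$. Since $u_t=-(Au-Du_x)_x$ and $Au-Du_x\to0$ as $x\to-\infty$ (the convolution derivatives decay), $\int_{-\infty}^x u_t\,dy=-(Au-Du_x)(x,t)$. This yields the stated formula $\widehat u_t=(A-Du_x/u)(\widehat u,t)$, i.e.\ the semi-Lagrangian form \eqref{eq:ut-hat-v-form} with $v=A-Du_x/u$.

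\textbf{Step 3: second derivative.} Since $u$ is $C^\infty$ on $\RR\times[t_0,T]$ and $u>0$, the velocity $v$ is $C^1$ there. Moreover $\widehat u(\xi,\cdot)$ is $C^1$, so $t\mapsto v(\widehat u(\xi,t),t)$ is $C^1$ by the chain rule, and we get $\widehat u_{tt}=(v_t+v\,v_x)(\widehat u,t)$. It remains to compute $\mathfrak a=v_t+vv_x$ using $u_t=-Au_x+Du_{xx}$ and $u_{xt}=-Au_{xx}+Du_{xxx}$.

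We have $v_t=-D(u_{xt}u-u_xu_t)/u^2$ and $v_x=-D(u_{xx}u-u_x^2)/u^2$. Expanding, all terms containing $A$ cancel; this is the Galilean invariance of the equation. The remaining terms are $-D^2\big(u_{xxx}/u-u_xu_{xx}/u^2\big)+D^2(u_x/u)(u_{xx}u-u_x^2)/u^2\cdot(-1)$, which collect to the stated expression $-D^2\big(u_{xxx}/u-2u_xu_{xx}/u^2+u_x^3/u^3\big)$. Continuity of this expression along $(\widehat u(\xi,t),t)$ gives $C^2([t_0,T])$.

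\textbf{Step 4: the case $t_0=0$, and the main obstacle.} Including $t_0=0$ requires controlling $u$, $u_x/u$ and higher derivatives uniformly up to $t=0$. Here the heat-kernel bounds blow up like powers of $t^{-1/2}$, and positivity of $u$ degenerates where $\init$ vanishes. The key point is positivity and decay of $u$ near $x=-\infty$ uniformly in $t$, and it is the step I expect to be the main obstacle. The plan is to invoke the hypotheses of Lemma~\ref{lem: extra regularity diffusion} (presumably smoothness, strict positivity, and integrable derivatives of $\init$). The derivatives then fall on $\init$ instead of on the kernel, giving $u_t,u_{tt}\in C([0,T];L^1)$ and $u>0$ on $\RR\times[0,T]$, and Steps~2--3 repeat verbatim on $[0,T]$.
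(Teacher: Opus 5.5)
Your proposal is correct and follows essentially the same route as the paper. You verify the hypotheses of Lemmas~\ref{lem: u_hat_well_defined}--\ref{lem: F_u deriv wrt t} from the heat-kernel representation, integrate $u_t=-(Au-Du_x)_x$ using decay at $-\infty$ to get $\widehat u_t=v(\widehat u,t)$ with $v=A-Du_x/u$, use the chain rule to get $\widehat u_{tt}=(v_t+vv_x)(\widehat u,t)$, and invoke Lemma~\ref{lem: extra regularity diffusion} for $t_0=0$.

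There are two small slips. In Step~3, the $D^2$ contribution of $vv_x$ is $+D^2\frac{u_x}{u}\frac{u_{xx}u-u_x^2}{u^2}$; your stray factor $(-1)$ would give $-D^2(u_{xxx}/u-u_x^3/u^3)$ rather than the correct formula you state. In Step~4, you neither need nor obtain $u_{tt}\in C([0,T];L^1)$ from the $C^3$ hypotheses; the argument only uses $u_t\in C([0,T];L^1(\RR))$ and continuity of $u,u_t,u_x,u_{xx},u_{xt}$ on $\RR\times[0,T]$.
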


\begin{proof}
    By Lemma \ref{lem:heat-kernel-diffusion}, $u\in C(\RR\times [t_0,T])$, and satisfies the hypotheses of Lemma \ref{lem: F_u deriv wrt t} for $t_0>0$. Thus, for every $\xi\in \RR$, $t\mapsto \widehat u(\xi,t)$ is differentiable with
    \begin{align*}
        \widehat u_t &=
        -\frac{\int_{-\infty}^x u_t(y,t)\,dy}{u(x,t)}
        \Bigg|_{(x,t)=(\widehat u(\xi,t),t)}=\left.\left(
        A-D\frac{u_x}{u}
    \right)\right|_{(\widehat u(\xi,t),t)},
    \end{align*}
    since $u\in C^\infty(\RR\times(0,T])$ solves \eqref{eq:AD}, and $u(\cdot,t) \in C_0(\RR)$ for every $t>0$. This gives $\widehat u(\xi,\cdot) \in C^1([t_0,T])$ (by composition of continuous functions). Differentiating again and using the regularity of $u$ we get the formula for $\widehat u_{tt}$ that is also continuous for $t>t_0$. 
    Indeed, defining   $v(x,t):=A-D\frac{u_x(x,t)}{u(x,t)} $then
    $\widehat u_t(\xi,t)=v(\widehat u(\xi,t),t)$, and 
    we get
    \begin{align*}
        \widehat u_{tt}(\cdot,t) 
        &= v_x(\widehat u(\cdot,t),t) \widehat u_t(\cdot,t) + v_t(\widehat u(\cdot,t),t) = \left.(v_x v +v_t)\right|_{(\widehat u(\cdot,t),t)}\\
        &= \left[-D\left(\frac{u_x}{u}\right)_x\left( A-D\frac{u_x}{u}\right) -D\left(\frac{u_x}{u}\right)_t\right]\Bigg|_{(\widehat u(\cdot,t),t)} \\
        & =  -D^2\left(\frac{u_{xxx}}{u} - 2\frac{u_xu_{xx}}{u^2} + \frac{u_x^3}{u^3} \right)\Bigg|_{(\widehat u(\cdot,t),t)}, 
    \end{align*}
    where the equality is valid for  $t \in [t_0,T]$. Thus, $\widehat{u}(\xi,\cdot) \in C^2([t_0,T])$, for every $t_0>0,$ because it is a composition of continuous functions.

    If, additionally, $u_0$ satisfies the hypotheses of Lemma \ref{lem: extra regularity diffusion}, we get $u\in C(\RR\times [0,T])$ and we can also apply Lemma \ref{lem: F_u deriv wrt t}  with $t_0 = 0$: the same steps as before prove regularity of $t\mapsto \widehat u(\xi,t)$ up to $t_0 = 0$ included. 

\end{proof}

\begin{proposition}\label{prop: taylor approx adv-diff}
    Let $D>0$, $A\in\mathbb R$, and let $u$ solve the advection-diffusion equation \eqref{eq:AD} on $\mathbb R\times(0,T]$, with
    $u(\cdot,0)=u_0$.
    Let $r>0$ be a reference density on $\RR$, and consider $\widehat u $ the CDT of $u$ with respect to $r$. Then, for every $t_0>0$,
     \begin{align}\label{eq: adv-diff taylors_general_bound}
        \left\|
            \widehat u(\xi,t)
            -
            \widehat u(\xi,t_0)
            -
            (t-t_0)\,\widehat u_t(\xi,t_0)
        \right\|_{L^2(dr)}
        \le
        \int_{t_0}^t
        \|\widehat u_{ss}(\cdot,s)\|_{L^2(dr)}
        (t-s)\,ds.
    \end{align}
    If, additionally, $u_0$ satisfies the hypotheses of Lemma \ref{lem: extra regularity diffusion}, then we can include $t_0=0$.
\end{proposition}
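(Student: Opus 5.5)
The plan is to prove the estimate pointwise in $\xi$ via Taylor's formula with integral remainder, and then pass to the $L^2(dr)$ norm with Minkowski's integral inequality.

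\textbf{Step 1 (pointwise remainder).} By Lemma~\ref{lem: hat tt C2}, for every fixed $\xi\in\RR$ the map $s\mapsto\widehat u(\xi,s)$ belongs to $C^2([t_0,T])$ for any $t_0>0$. It belongs to $C^2([0,T])$ under the extra hypotheses of Lemma~\ref{lem: extra regularity diffusion}. The scalar Taylor formula with integral remainder therefore gives, for every $\xi$ and every $t\in[t_0,T]$,
\begin{equation*}
    \widehat u(\xi,t)-\widehat u(\xi,t_0)-(t-t_0)\,\widehat u_t(\xi,t_0)
    =
    \int_{t_0}^t \widehat u_{ss}(\xi,s)\,(t-s)\,ds .
\end{equation*}
This is an identity between genuine functions of $\xi$, so no Bochner calculus is needed at this stage.

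\textbf{Step 2 (measurability).} Before taking norms I will check that $(\xi,s)\mapsto \widehat u_{ss}(\xi,s)$ is jointly measurable on $\RR\times[t_0,T]$. The explicit formula in Lemma~\ref{lem: hat tt C2} writes $\widehat u_{ss}$ as a continuous function of $u$ and its $x$-derivatives, evaluated at $(\widehat u(\xi,s),s)$. Moreover, $\widehat u(\xi,s)=F_{u(\cdot,s)}^{-1}(F_r(\xi))$ is continuous in $(\xi,s)$: this follows from the implicit function theorem, since $F_u$ is $C^1$ with $\partial_xF_u=u>0$ (Lemma~\ref{lem: differentiability of F_u(x,t)}). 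Hence the integrand is continuous, and in particular measurable.

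\textbf{Step 3 (Minkowski).} Take the $L^2(dr)$ norm of both sides and apply Minkowski's integral inequality for the nonnegative kernel $(t-s)$:
\begin{equation*}
    \Bigl\|\int_{t_0}^t \widehat u_{ss}(\cdot,s)(t-s)\,ds\Bigr\|_{L^2(dr)}
    \le
    \int_{t_0}^t \|\widehat u_{ss}(\cdot,s)\|_{L^2(dr)}\,(t-s)\,ds .
\end{equation*}
This yields \eqref{eq: adv-diff taylors_general_bound}. If the right-hand side is infinite, there is nothing to prove. Minkowski's inequality in this form holds for measurable integrands with values in $[0,\infty]$, so no a priori finiteness of $\|\widehat u_{ss}(\cdot,s)\|_{L^2(dr)}$ is required. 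Finally, the case $t_0=0$ follows verbatim once Lemma~\ref{lem: hat tt C2} provides $C^2$ regularity up to $s=0$.

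\textbf{Main obstacle.} The delicate point is the joint measurability and continuity in Step 2, in particular at $t_0=0$. There the continuity of $u$ and of its derivatives up to $t=0$ must be taken from Lemma~\ref{lem: extra regularity diffusion}. I expect to handle this by invoking continuity of the composite expression $h(\widehat u(\xi,s),s)$ on the closed time interval, in the same spirit as Lemma~\ref{lem: only cont}.
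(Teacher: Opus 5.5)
Your proposal is correct and follows essentially the same route as the paper. The paper also uses a pointwise Taylor expansion with integral remainder in $s$, justified by the $C^2$-in-time regularity of $s\mapsto\widehat u(\xi,s)$ from Lemma~\ref{lem: hat tt C2} (on $[t_0,T]$ for $t_0>0$, and on $[0,T]$ under the hypotheses of Lemma~\ref{lem: extra regularity diffusion}), then takes $L^2(dr)$ norms and applies Minkowski's integral inequality. Your Step~2 (joint continuity, hence measurability, of $(\xi,s)\mapsto\widehat u_{ss}(\xi,s)$) is a detail the paper leaves implicit, and you handle it correctly. One small point: since the integrand is signed, first bound it pointwise by $\int_{t_0}^t|\widehat u_{ss}(\xi,s)|(t-s)\,ds$ before invoking Minkowski for nonnegative functions.
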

\begin{proof}
    Since $t\mapsto \widehat u_{tt}(\xi,t)$ is continuous for every $t>0$, 
    Taylor's theorem with integral remainder gives
    \begin{equation*}
        \label{eq:taylors-remainder-pointwise}
        \widehat u(\xi,t)
        -
        \widehat u(\xi,t_0)
        -
        (t-t_0)\,\widehat u_t(\xi,t_0)
        =
        \int_{t_0}^t
        \widehat u_{ss}(\xi,s)
        (t-s)\,ds.
    \end{equation*}
    Taking $L^2(dr)$ norms on each side and applying Minkowski's inequality we get \eqref{eq: adv-diff taylors_general_bound}. Under the extra hypothesis on $u_0$, then $t\mapsto \widehat u_{tt}(\xi,t)$ is continuous on $[0,t]$ and the conclusion follows as before.

\end{proof}
\begin{corollary}\label{cor: bound-hat-utt-diff}
        Let $D>0$, $A\in\mathbb R$, and let $u$ solve the advection-diffusion equation \eqref{eq:AD} on $\mathbb R\times(0,T]$, with
    $u(\cdot,0)=u_0$.
    Let $r>0$ be a reference density on $\RR$, and consider $\widehat u $ the CDT of $u$ with respect to $r$. Then, for every $t_0>0$,
    \begin{equation}\label{eq: sup widehat u_ss bound}
        \sup_{s\in[t_0,t]} \|\widehat u_{ss}(\cdot,s) \|_{L^2(dr)} < D^2 C(u(\cdot,t_0)),
    \end{equation}
    where $C$ is a constant depending on $u(\cdot,t_0)$ and its spatial derivatives. Thus,
    \begin{align}\label{eq: taylor full bound}
        \left\|
            \widehat u(\xi,t)
            -
            \widehat u(\xi,t_0)
            -
            (t-t_0)\,\widehat u_t(\xi,t_0)
        \right\|_{L^2(dr)}
        \le D^2 C(u(\cdot,t_0)) (t-t_0)^2/2.
    \end{align}
    If, additionally, $u_0$ satisfies the hypotheses of Lemma \ref{lem: extra regularity diffusion}, then we can include $t_0=0$.
\end{corollary}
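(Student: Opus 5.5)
The plan is to bound $\|\widehat u_{ss}(\cdot,s)\|_{L^2(dr)}$ with the pushforward identity \eqref{eq:norm-a}, and then insert that bound into Proposition~\ref{prop: taylor approx adv-diff}.

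By Lemma~\ref{lem: hat tt C2}, for $s\in[t_0,t]$ we have $\widehat u_{ss}(\xi,s)=\mathfrak a(\widehat u(\xi,s),s)$ with
\begin{equation*}
\mathfrak a(x,s) := -D^2\Bigl(\frac{u_{xxx}}{u}-2\frac{u_xu_{xx}}{u^2}+\frac{u_x^3}{u^3}\Bigr)(x,s).
\end{equation*}
Since $\widehat u(\cdot,s)_\#(r\,d\xi)=u(\cdot,s)\,dx$, this gives
\begin{equation*}
\|\widehat u_{ss}(\cdot,s)\|_{L^2(dr)}^2 = D^4\int_{\mathbb R}\Bigl(\frac{u_{xxx}}{u}-2\frac{u_xu_{xx}}{u^2}+\frac{u_x^3}{u^3}\Bigr)^2 u\,dx .
\end{equation*}
The factor $D^2$ therefore comes out explicitly. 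What remains is to show that the bracketed integral, call it $J(s)$, is bounded uniformly for $s\in[t_0,t]$ by a constant depending only on $u(\cdot,t_0)$.

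To control $J(s)$, I would shift the time origin to $t_0$. By the semigroup property,
\begin{equation*}
u(\cdot,s)=G_{s-t_0}*u(\cdot,t_0)\quad\text{(translated by }A(s-t_0)\text{)},
\end{equation*}
where $G$ is the heat kernel. Translation does not change $J$, and $u(\cdot,t_0)$ is smooth and positive. The quantities in $J$ are derivatives of the logarithm: writing $w=\log u$, one has $u_x/u=w_x$ and $u_{xx}/u=w_{xx}+w_x^2$, so the bracket equals $w_{xxx}+w_xw_{xx}$ (after checking coefficients). Alternatively, I can bound each term separately by Cauchy--Schwarz using $\int u_{xxx}^2/u$, $\int u_x^2u_{xx}^2/u^3$ and $\int u_x^6/u^5$. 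These are Fisher-information-type functionals of $u(\cdot,s)$.

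The key step is a monotonicity or convexity argument: such functionals do not increase under convolution with a probability kernel. For example, $(f,g)\mapsto g^2/f$ is jointly convex, which gives $\int (G*u_x)^2/(G*u)\le\int u_x^2/u$. The analogous joint convexity of $(u,u_x,u_{xx},u_{xxx})\mapsto u\,\bigl(\text{bracket}\bigr)^2$ should hold by homogeneity arguments; otherwise each separate term can be handled by a Jensen inequality of the form $|G*h|^p/(G*u)^{p-1}\le G*(|h|^p/u^{p-1})$. This would yield
\begin{equation*}
\sup_{s\in[t_0,t]} J(s)\le C(u(\cdot,t_0))^2 ,
\end{equation*}
which is \eqref{eq: sup widehat u_ss bound}, possibly after enlarging $C$ slightly to make the inequality strict.

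Plugging this into \eqref{eq: adv-diff taylors_general_bound} and using $\int_{t_0}^t (t-s)\,ds=(t-t_0)^2/2$ gives \eqref{eq: taylor full bound}. In the case $t_0=0$, the same argument applies with $u_0$ in place of $u(\cdot,t_0)$, provided $C(u_0)$ is finite under the hypotheses of Lemma~\ref{lem: extra regularity diffusion}.

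I expect the main obstacle to be the convexity and Jensen step together with the finiteness of $C(u(\cdot,t_0))$. Tails where $u$ is small can make the weighted integrals delicate, although for a Gaussian-smoothed density $w_x$ grows only polynomially in the tails, so the integrals should be finite.
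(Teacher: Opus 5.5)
This is essentially the paper's proof: the pushforward identity for $\|\widehat u_{ss}(\cdot,s)\|_{L^2(dr)}^2$, the split $|c-2ab+a^3|^2\le 3c^2+12a^2b^2+3a^6$ with $a=u_x/u$, $b=u_{xx}/u$, $c=u_{xxx}/u$, then the Jensen-type inequality of Lemma~\ref{lem: convolution ambrosio} (with $\rho=G_{D(s-t_0)}$ and $\mu=u(\cdot,t_0)$) applied term by term, and finally Proposition~\ref{prop: taylor approx adv-diff}; the mixed term is not of your single-$h$ form, so it first needs $\int a^2b^2u\le(\int a^4u)^{1/2}(\int b^4u)^{1/2}$. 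Drop the claimed joint convexity of the full bracket, which fails: by $1$-homogeneity it would reduce to convexity of $(k-2gh+g^3)^2$ in $(g,h,k)=(u_x,u_{xx},u_{xxx})$ at $u=1$. Also, finiteness of $C(u(\cdot,t_0))$ for $t_0>0$ follows from the same lemma with the roles swapped, $\rho=u_0$ and $\mu=G_{Dt_0}$, as in Remark~\ref{rem: conv lemma again}.
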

\begin{proof}
     Using \eqref{eq: push-forward id}, by the change of variable formula for pushforward measures we obtain $r(\xi)\, d\xi = u(x,t)\, dx$ when $x = \widehat u(\xi,t)$. Thus, 
     \begin{align*}
        \|\widehat u_{ss}(\cdot,s)\|_{L^2(dr)}^2
        &=
        D^4
        \int_{\mathbb R}
        \left|
            \frac{u_{xxx}(x,s)}{u(x,s)}
            -
            2\frac{u_xu_{xx}(x,s)}{u^2(x,s)}
            +
            \frac{u_x^3(x,s)}{u^3(x,s)}
        \right|^2
        u(x,s)\,dx.
    \end{align*}
    Now, naming $a=u_x/u$, $b=u_{xx}/u$, and $c=u_{xxx}/u$ and using that $|c-2ab+a^3|^2\le 3|c|^2 + 12|a|^2|b|^2 + 3|a|^6$, 
    \begin{align*}
        \frac{\|\widehat u_{ss}(\cdot,s)\|_{L^2(dr)}^2}{D^4}
        &\le
        3
        \int
        \left|
            \frac{u_{xxx}}{u}
        \right|^2
        u\,dx 
        +
        12
        \left(
            \int
            \left|
                \frac{u_x}{u}
            \right|^4
            u\,dx
        \right)^{1/2}
        \left(
            \int
            \left|
                \frac{u_{xx}}{u}
            \right|^4
            u\,dx
        \right)^{1/2}
        +
        3
        \int
        \left|
            \frac{u_x}{u}
        \right|^6
        u\,dx.
    \end{align*}
   Since the above integrals are translation invariant with respect to $x$, we can consider  $u(x,s)$ and its derivatives as convolutions of $u(\cdot,t_0)$ and its derivatives against  the heat kernel $G_{D(s-t_0)}$. Now,  we will use Lemma \ref{lem: convolution ambrosio} with $\rho = G_{D(s-t_0)}$, $\mu=u(\cdot,t_0)$, and $\Theta $ equal to $u_x(\cdot,t_0)/u(\cdot,t_0)$, $u_{xx}(\cdot,t_0)/u(\cdot,t_0)$ or $u_{xxx}(\cdot,t_0)/u(\cdot,t_0)$. The right hand side can be bounded by a constant that depends on $u(\cdot,t_0)$ and its spatial derivatives: 
    \begin{align*}
        C^2(u(\cdot,t_0)) &:= 3
        \int
        \left|
            \tfrac{u_{xxx}(x,t_0)}{u(x,t_0)}
        \right|^2
        u(x,t_0)\,dx +
        3
        \int
        \left|
            \tfrac{u_x(x,t_0)}{u(x,t_0)}
        \right|^6
        u(x,t_0)\,dx
        \\&\qquad +
        12
        \left(
            \int
            \left|
                \tfrac{u_x(x,t_0)}{u(x,t_0)}
            \right|^4
            u(x,t_0)\,dx 
            \right)^{1/2}
            \left(
            \int
            \left|
                \tfrac{u_{xx}(x,t_0)}{u(x,t_0)}
            \right|^4
            u(x,t_0)\,dx
        \right)^{1/2}
    \end{align*}
    Thus, we have proved $\eqref{eq: sup widehat u_ss bound}$. The bound $\eqref{eq: taylor full bound}$ follows from Proposition \ref{prop: taylor approx adv-diff} and $\eqref{eq: sup widehat u_ss bound}$.

    Under the extra regularity hypothesis on $u_0$, the constant $C(u(\cdot,0))=C(u_0)$ is calculated by writing $u(x,s)$ and its derivatives as convolutions of $u_0$ and its derivatives against  the heat kernel $G_{Ds}$:
    \begin{align}\label{eq: C of u0}
        C^2(u_0) &:= 3
        \int
        \left|
            \tfrac{u'''_0}{u_0}
        \right|^2
        u_0\,dx +        12
        \left(
            \int
            \left|
                \tfrac{u'_0}{u_0}
            \right|^4
            u_0\,dx 
            \right)^{1/2}
            \left(
            \int
            \left|
                \tfrac{u''_0}{u_0}
            \right|^4
            u_0\,dx
        \right)^{1/2}+
        3
        \int
        \left|
            \tfrac{u'_0}{u_0}
        \right|^6
        u_0\,dx
    \end{align}
   
\end{proof}

\begin{remark}\label{rem: conv lemma again}
        In the previous corollary it is not clear, a priori, that the constant $C(u(\cdot,t_0))<\infty$. Nevertheless, one can write $u(\cdot,t_0)$ and its derivatives as convolutions of $u_0$ against the heat kernel $G_{Dt_0}$ and its derivatives. Thus, using Lemma \ref{lem: convolution ambrosio} again, but 
        with $\rho = u_0$, $\mu=G_{Dt_0}$, and $\Theta $ equal to $G_{Dt_0}'/G_{Dt_0}$, $G_{Dt_0}''/G_{Dt_0}$ or $G_{Dt_0}'''/G_{Dt_0}$ (i.e., polynomials on $x$ and $1/Dt_0$) one obtains 
        \begin{small}
            \begin{align*}
            C^2(u(\cdot,t_0))&\leq3
            \int
            \left|
                \tfrac{G_{Dt_0}'''}{G_{Dt_0}}
            \right|^2
            G_{Dt_0}dx 
            +
            12
            \left(
                \int
                \left|
                    \tfrac{G_{Dt_0}'}{G_{Dt_0}}
                \right|^4
                G_{Dt_0}dx
                \int
                \left|
                    \tfrac{G_{Dt_0}''}{G_{Dt_0}}
                \right|^4
                G_{Dt_0}dx
            \right)^{1/2}
            +
            3
            \int
            \left|
                \tfrac{G_{Dt_0}'}{G_{Dt_0}}
            \right|^6
            G_{Dt_0}dx\\
            &=3\int \left(
            -\frac{x^3}{8D^3t_0^3}
            +
            \frac{3x}{4D^2t_0^2}
            \right)^2G_{Dt_0}(x)\,dx+3\int\left(-\frac{x}{2Dt_0}\right)^6G_{Dt_0}(x)\,dx \\
            &\qquad + 12\left(\int\left(-\frac{x}{2Dt_0}\right)^4G_{Dt_0}(x)\,dx\right)^{1/2}\left(\int \left(
            \frac{x^2}{4D^2t_0^2}
            -
            \frac{1}{2Dt_0}
            \right)^4G_{Dt_0}(x)\,dx\right)^{1/2}\\
            & = \frac{63+72\sqrt{5}}{8(Dt_0)^3}<\infty.
            \end{align*}
        \end{small}

\end{remark}

\begin{theorem}\label{thm:ADR-CDT-widths-R}
  Let $D>0$, $A\in\mathbb R$, and let $u$ solve the advection-diffusion equation \eqref{eq:AD} on $\mathbb R\times(0,T]$, with
    $u(\cdot,0)=u_0$.
    Let $r>0$ be a reference density on $\RR$, and consider $\widehat u $ the CDT of $u$ with respect to $r$. Then, for every $t_0>0$, 
    \begin{equation}\label{eq: kolm bound 2 AD in R}
        d_2\big(\widehat{\mathcal M}_{[t_0,T]};L^2(dr)\big)
        \leq
        D^2 \frac{(T-t_0)^2}{2}
        C(u(\cdot,t_0)).
    \end{equation}
    If, additionally, $u_0$ satisfies the hypotheses of Lemma \ref{lem: extra regularity diffusion}, then we can include $t_0=0$.
\end{theorem}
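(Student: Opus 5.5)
The plan is to exhibit an explicit two-dimensional subspace that contains a Taylor approximation of the whole CDT trajectory, and then invoke Corollary~\ref{cor: bound-hat-utt-diff}. Fix $t_0>0$ and set
\[
V:=\mathrm{span}\{\widehat u(\cdot,t_0),\,\widehat u_t(\cdot,t_0)\}\subset L^2(dr),
\]
which has dimension at most two. We first have to make sure both generators lie in $L^2(dr)$. For $\widehat u(\cdot,t_0)$ this follows from finite second moment of $u(\cdot,t_0)$, via \eqref{eq:norm-hat-u}. For $\widehat u_t(\cdot,t_0)$ we use the representation $\widehat u_t=(A-Du_x/u)\circ\widehat u$ from Lemma~\ref{lem: hat tt C2} together with the pushforward identity, which gives
\[
\|\widehat u_t(\cdot,t_0)\|_{L^2(dr)}^2=\int (A-Du_x/u)^2\,u\,dx .
\]
This quantity is finite by the same convolution argument as in Remark~\ref{rem: conv lemma again}, applied to the Fisher-information-type term $\int |u_x/u|^2 u$.

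Next, for each $t\in[t_0,T]$ we take the element $\widehat v(t):=\widehat u(\cdot,t_0)+(t-t_0)\widehat u_t(\cdot,t_0)\in V$ as a competitor. By the definition of the Kolmogorov width,
\[
d_2\big(\widehat{\mathcal M}_{[t_0,T]};L^2(dr)\big)\le \sup_{t\in[t_0,T]}\big\|\widehat u(\cdot,t)-\widehat u(\cdot,t_0)-(t-t_0)\widehat u_t(\cdot,t_0)\big\|_{L^2(dr)} .
\]
Corollary~\ref{cor: bound-hat-utt-diff}, through \eqref{eq: taylor full bound}, bounds the right-hand side by $D^2C(u(\cdot,t_0))(t-t_0)^2/2$. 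This is increasing in $t$, so taking the supremum at $t=T$ yields \eqref{eq: kolm bound 2 AD in R}.

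The step needing the most care is legitimizing the pointwise Taylor expansion underlying Proposition~\ref{prop: taylor approx adv-diff}. We need $t\mapsto\widehat u(\xi,t)$ to be $C^2$ on $[t_0,T]$ for each $\xi$, which is Lemma~\ref{lem: hat tt C2}. We also need the $L^2(dr)$-valued integral remainder to be controlled by Minkowski's integral inequality, which requires measurability and a finite bound on $\sup_s\|\widehat u_{ss}(\cdot,s)\|_{L^2(dr)}$; that bound is exactly \eqref{eq: sup widehat u_ss bound}. Finiteness of $C(u(\cdot,t_0))$ is provided by Remark~\ref{rem: conv lemma again}.

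For the case $t_0=0$ under the hypotheses of Lemma~\ref{lem: extra regularity diffusion}, we repeat the same argument with $V=\mathrm{span}\{\widehat\init,\widehat u_t(\cdot,0)\}$. Here we use the $t_0=0$ versions of Lemma~\ref{lem: hat tt C2} and Corollary~\ref{cor: bound-hat-utt-diff}, with the constant $C(u_0)$ from \eqref{eq: C of u0}, assumed finite under those hypotheses.
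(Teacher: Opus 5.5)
Your proposal is correct and follows the paper's proof essentially verbatim. The paper also takes $V_2=\mathrm{span}\{\widehat u(\cdot,t_0),\widehat u_t(\cdot,t_0)\}$, uses the first-order Taylor polynomial $\widehat u(\cdot,t_0)+(t-t_0)\widehat u_t(\cdot,t_0)$ as the competitor, bounds the remainder via Proposition~\ref{prop: taylor approx adv-diff} and \eqref{eq: taylor full bound}, and handles $t_0=0$ the same way; your extra check that $\widehat u_t(\cdot,t_0)\in L^2(dr)$ via a Fisher-information bound is a harmless refinement the paper leaves implicit.
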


\begin{proof}
    The bound \eqref{eq: kolm bound 2 AD in R} follows from considering the subspace $V_2$ of $L^2(dr)$ spanned by $\widehat{u}(\cdot,t_0)$ and $\widehat{u}_t(\cdot,t_0)= \left.(A-D\frac{u_x}{u})\right|_{(\widehat{u}(\xi,t_0),t_0)}$ and applying Proposition \ref{prop: taylor approx adv-diff} and equation \eqref{eq: taylor full bound} to obtain,
    \begin{align*}
        d_2\big(\widehat{\mathcal M}_{[t_0,T]};L^2(dr)\big)
        &\leq
        \sup_{t\in [t_0,T]} \mathrm{dist}(\widehat u(\cdot,t),V_2)\\
        &\leq 
        \left\|
            \widehat u(\cdot,t)
            -
            \widehat u(\cdot,t_0)
            -
            (t-t_0),\widehat u_t(\cdot,t_0)
        \right\|_{L^2(dr)}\\
        &\le D^2 C(u(\cdot,t_0)) (T-t_0)^2/2.
    \end{align*}
    
    Under the extra regularity hypothesis on $u_0$, the same proof follows for $t_0=0$, getting  $d_2\big(\widehat{\mathcal M}_{[0,T]};L^2(dr)\big)
        \leq
        D^2 \frac{T^2}{2}
        C(u_0)$, where $C^2(u_0)$ is as in \eqref{eq: C of u0}.
\end{proof}

\begin{remark}
     Whereas in the case $t_0>0$ the constant $C(u(\cdot,t_0))$ may depend indirectly on the diffusion coefficient $D$, in the particular case $t_0=0$, it becomes clear that the bound $C^2(u_0)$ given in \eqref{eq: C of u0} depends only on the initial condition.
    Under this circumstance, we can take the limit $D\to 0$ in \eqref{eq: kolm bound 2 AD in R} and recover the pure advection 2-Kolmogorov width $d_2\big(\widehat{\mathcal M}_{[t_0,T]};L^2(dr)\big) =  0$.  
\end{remark}

\subsection{Numerical examples}\label{sec: num adv-diff}
    We perform the experiments described in Section~\ref{sec:CDT-POD-method} for the one-dimensional advection-diffusion equation \eqref{eq:AD}, with advection and diffusion coefficients $A$ and $D$, respectively. To generate the required HFMs, we use the computational domain $(x,t)\in[0,30]\times[0,1.5]$, discretized with 1500 spatial points and 200 output times. The spatial derivatives are approximated using centered finite differences, while time integration is performed using an implicit first-order method with homogeneous Dirichlet boundary conditions. The initial condition is generated as a reproducible mixture of five Gaussian functions, tapered and buffered so that it vanishes near the boundaries. Thus, mass is numerically preserved. 
    For Fig.\ref{fig: adv_diff_three_rows}, we use $A=5$ and $D=0.5$, resulting in an advection-dominated evolution with moderate diffusive spreading. For Fig.\ref{fig: adv_diff_more_transport}, we consider three cases: pure advection, with $(A,D)=(5,0)$; pure diffusion, with $(A,D)=(0,1)$; and combined advection and diffusion, with $(A,D)=(5,1)$. The same type of initial condition is used in all cases. 
    
    \begin{figure}[ht!]
        \centering
    
        \begin{subfigure}{\textwidth}
            \centering
            \begin{minipage}{0.01\textwidth}
                \centering
                \subcaption{}
            \end{minipage}
            \begin{minipage}{0.98\textwidth}
                \centering
                \includegraphics[width=\figwidth]{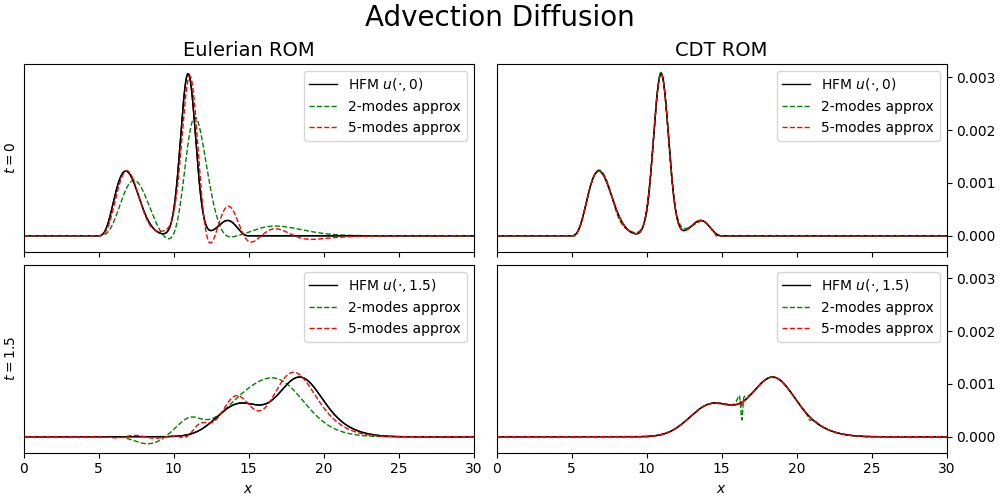}
            \end{minipage}
        \end{subfigure}
    
        \vspace{0.3cm}
    
        \begin{subfigure}{\textwidth}
            \centering
            \begin{minipage}{0.01\textwidth}
                \centering
                \subcaption{}
            \end{minipage}
            \begin{minipage}{0.98\textwidth}
                \centering
                \includegraphics[width=\figwidth]{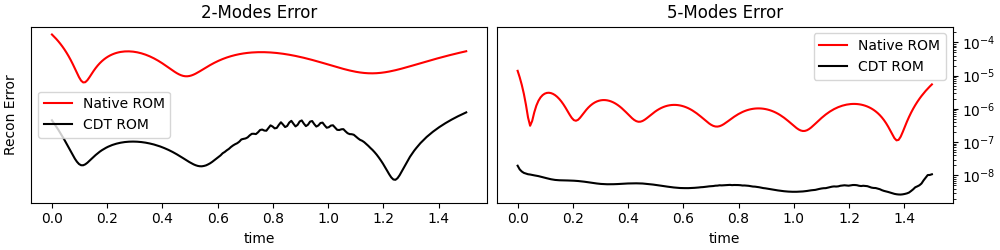}
            \end{minipage}
        \end{subfigure}
    
        \vspace{0.3cm}
    
        \begin{subfigure}{\textwidth}
            \centering
            \begin{minipage}{0.01\textwidth}
                \centering
                \subcaption{}
            \end{minipage}
            \begin{minipage}{0.98\textwidth}
                \centering
                \includegraphics[width=\figwidth]{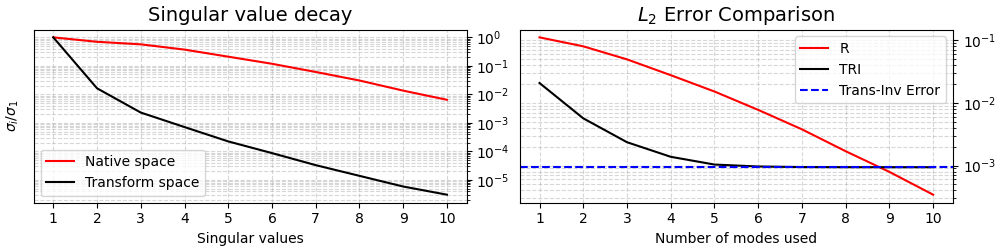}
            \end{minipage}
        \end{subfigure}
    
        \caption{%
            Advection-Diffusion equation: comparison between native Euclidean ROM and CDT-ROM. (a) The top panels show high-fidelity solutions and rank-$r$ (for $r=2$ and $r=5$) reconstructions at selected times. 
            (b) The middle panels show the time-dependent $L^2$ reconstruction error for $r=2$ and $r=5$. 
            (c) The bottom-left panel compares the singular value decay of the native and transformed snapshot matrices. The bottom-right panel compares the global $L^2$ reconstruction error as a function of the number of retained modes and the transform/inverse-transform error floor (independent of the number of modes).
        }
        \label{fig: adv_diff_three_rows}
    \end{figure}

    \begin{figure}[ht!]
        \centering
    
        \begin{subfigure}{\textwidth}
            \centering
            \begin{minipage}{0.01\textwidth}
                \centering
                \subcaption{}
            \end{minipage}
            \begin{minipage}{0.98\textwidth}
                \centering
                \includegraphics[width=\figwidth]{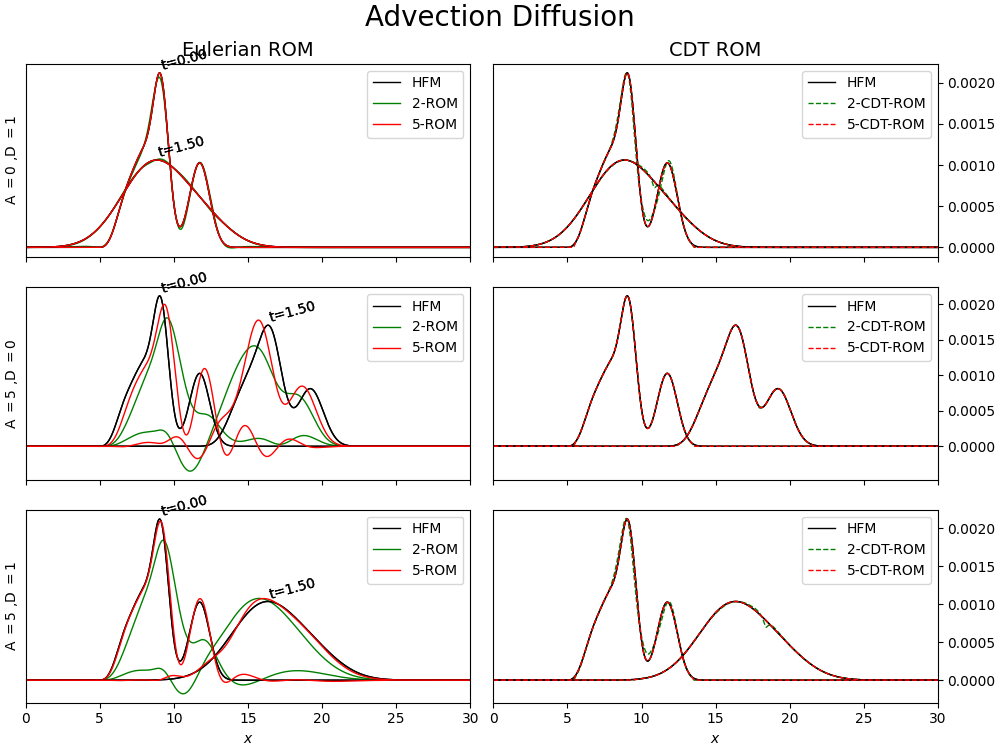}
            \end{minipage}
        \end{subfigure}
    
        \vspace{0.3cm}
    
        \begin{subfigure}{\textwidth}
            \centering
            \begin{minipage}{0.01\textwidth}
                \centering
                \subcaption{}
            \end{minipage}
            \begin{minipage}{0.98\textwidth}
                \centering
               \includegraphics[width=\figwidth]{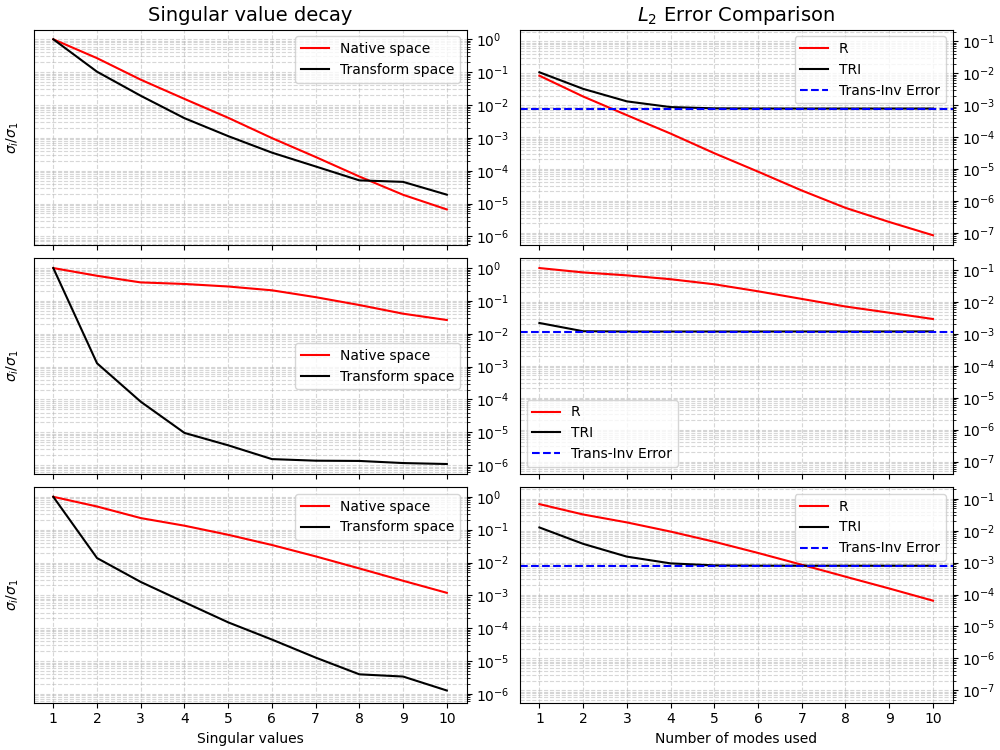}
            \end{minipage}
        \end{subfigure}
        
        \caption{%
            Advection-Diffusion equation for three regimes with different advection and diffusion coefficients. Each row corresponds to one regime. The top panels compare representative reconstructions, while the bottom panels compare singular value decay and rank-dependent $L^2$ reconstruction errors. The results show that reduced order modeling in CDT space is on par with native space for pure diffusion and becomes increasingly effective as advection-dominated dynamics become stronger. The transform-inverse bottleneck remains present for pure diffusion.
        }
        \label{fig: adv_diff_more_transport}
    \end{figure}
    \clearpage

\section{Conclusions}\label{sec: conclusions}
In this work, we developed a Reduced Order Modeling (ROM) framework for one-dimensional mass-preserving scalar conservative PDEs based on the Cumulative Distribution Transform (CDT). The proposed CDT-POD method maps
nonnegative, equal-mass snapshots to CDT space, performs Proper Orthogonal Decomposition (POD) in the transformed coordinates, and reconstructs physical-space approximations through the inverse CDT. This yields a nonlinear approximation method in Eulerian variables while retaining the computational simplicity of linear POD in a Hilbert space (precisely, in $L^2(dr)$ where the weight is given by a fixed reference probability density $r$ chosen by the user).

The motivation for this approach lies in the transport geometry encoded by the CDT. By identifying probability densities with monotone transport maps, the CDT converts the $2$-Wasserstein distance into an $L^2$-distance and represents spatial translations as affine trajectories. In this form, standard linear methods can be applied after preprocessing the data, while the underlying Wasserstein geometry remains well suited to conservation laws governed by mass displacement. For pure linear transport, this effect is exact: the transformed solution manifold lies in a two-dimensional linear space, so its Kolmogorov n-width in CDT-space vanishes for every $n\geq 2$.

The theoretical results developed in this paper quantify how this exact
transport structure persists, or is perturbed, for more general conservative
dynamics. For first-order hyperbolic conservation laws, we obtained two
complementary types of estimates. Robust transport-metric estimates give
$O(n^{-1})$ width bounds under low regularity and remain meaningful for
entropy solutions beyond shock formation. In smooth pre-shock regimes, where
the CDT dynamics can be differentiated more precisely, curvature-based
arguments yield sharper $O(n^{-2})$ bounds. For advection-diffusion, the
advective component retains the affine CDT structure, while diffusion produces
a controlled departure from the pure transport plane. 

The numerical experiments support this theoretical picture. Across inviscid Burgers' equation, traffic flow, 
Buckley-Leverett, and advection-diffusion dynamics, CDT-POD often produces faster singular-value
decay and lower reconstruction errors than Eulerian POD when considering a small number of
modes.

Our analysis also identifies several limitations and directions for future work. Although the CDT maps densities into $L^2(dr)$, its image is not the entire Hilbert space: admissible elements must correspond to monotone transport maps. Therefore, a linear POD approximation in CDT space may leave the range of the CDT if the reduced modes and coefficients do not preserve monotonicity. In such cases, the inverse transform may introduce artifacts or require additional post-processing. The mass preservation and non-negativity properties of the reconstructed density depend on the admissibility of the reduced CDT map. A natural next step is to develop constrained or structure-preserving variants of CDT-POD, for instance by enforcing monotonicity in the reduced coordinates, projecting onto the cone of admissible transport maps, or using adaptive reference densities.

There is also room to improve the theoretical results. The Kolmogorov width bounds obtained in this work rely on particular representations of the solutions under consideration (i.e., characteristic line solutions for hyperbolic equations, or convolution formulas for advection-diffusion).
This leaves two important directions only partially addressed by the present
analysis: nonlinear viscous conservation laws, where diffusion and nonlinear
transport interact, and sharper post-shock estimates for hyperbolic
conservation laws. Although
Theorem~\ref{thm:hyperbolic-cdt-lipschitz} already provides a robust
$\mathcal{O}(n^{-1})$ bound for entropy solutions beyond shock formation,
obtaining a higher-regularity post-shock counterpart to the pre-shock
$\mathcal{O}((n-1)^{-2})$ estimate of
Theorem~\ref{thm:hyperbolic-cdt-width} remains open.
We believe that the regularity results for the CDT obtained via the implicit function theorem  may provide a path toward treating these cases. In the viscous case, this could be done by combining the smoothing effects of diffusion with regularity arguments for the CDT. In the post-shock hyperbolic case, one could couple implicit solution formulas for the dynamics with the implicit definition of the CDT, and then apply suitable implicit function theorem arguments.

Overall, this work contributes to a more precise understanding of why transport-based coordinates can improve reduced models for one-dimensional conservative dynamics. It connects the observed performance of CDT-POD to concrete approximation-theoretic mechanisms and highlights several complementary ways of interpreting the CDT: as a map into Wasserstein space, as a global alternative to characteristic curves, and as an implicitly defined coordinate system. These perspectives provide a foundation for developing new methods that further integrate transport theory with reduced order modeling.

\begin{small}
\begin{center}
    \textbf{Acknowledgments}
   \vspace{-0.1in}
\end{center}
Harbir Antil, Aryan Saxena, and Sarswati Shah are partially supported by the Office of Naval Research (ONR) under Award NO: N00014-24-1-2147, NSF grant DMS-2408877, and the Air Force Office of Scientific Research (AFOSR) under Award NO: FA9550-25-1-0231. Gustavo K. Rohde, Ivan V. Medri, and Kristofor E. Pas are partially supported by NIH  Award GM130825, and ONR Award N000142212505.
\end{small}

\appendix

\section{Relations Between Kolmogorov Width and POD/SVD}\label{sec: kolmogorov width and SVD}

Consider, in general, a matrix  $X\in\mathbb R^{m\times n}$. In our work, we take $X$ to arise from solution snapshots of a PDE, either as $\mathbb U$ or $\widehat{\mathbb U}$ (see \eqref{eq:U} and \eqref{eq:hat-U}).
Proper Orthogonal Decomposition (POD) finds the best rank-$k$ approximation of a ``snapshot'' matrix $X$.  More precisely, it solves
\begin{equation}\label{eq:pod_frob}
    X_k = \arg\min_{\operatorname{rank}(Y)\le k} \|X-Y\|_F^2,
\end{equation}
where $\|\cdot\|_F$ denotes the Frobenius norm. By the Eckart-Young-Mirsky theorem, the minimizer is given by the truncated singular value decomposition (SVD) of $X$: If
$
X = U\Sigma V^T
$
is the singular value decomposition of $X$, then
$
X_k = U_k\Sigma_k V_k^T,
$
where $U_k\in\mathbb R^{m\times k}$, $\Sigma_k\in\mathbb R^{k\times k}$, and $V_k\in\mathbb R^{n\times k}$ contain the first $k$ singular vectors and singular values.
This matrix approximation problem has a natural interpretation in terms of approximation by subspaces. Let
$
\mathcal U_k := \operatorname{span}\{u_1,\dots,u_k\}
$
be the subspace spanned by the first $k$ left singular vectors, and let $P_V$ denote the orthogonal projection onto a subspace $V\subset \mathbb R^m$. Then $\mathcal U_k$ solves
\begin{equation*}
\mathcal U_k
=
\arg\min_{\dim(V)=k}\sum_{j=1}^n \|x_j-P_Vx_j\|_2^2=\arg\min_{\dim(V)=k}\sum_{j=1}^n \min_{v\in V}\|x_j-v\|_2^2,
\end{equation*}
where $x_1,\dots,x_n$ are the columns of $X$. 
Indeed, since $P_{\mathcal U_k}X = U_kU_k^TX = X_k$, the projection error and the rank-$k$ approximation error coincide:
\begin{equation*}
\|(I-P_{\mathcal U_k})X\|_F^2
=
\|X-X_k\|_F^2.
\end{equation*}
Therefore, the POD error is given by
\begin{equation*}
\|X-X_k\|_F^2 = \sum_{j>k}\sigma_j^2.
\end{equation*}

The previous formulation is naturally associated with the Frobenius norm, which measures the aggregate squared approximation error over the given snapshots. If instead one replaces the Frobenius norm by the operator norm, one obtains a related but conceptually different approximation problem:
\begin{equation*}\label{eq:pod_op}
    X_k = \arg\min_{\operatorname{rank}(Y)\le k}\|X-Y\|_2.
\end{equation*}
The minimizer is still given by the truncated SVD, but the error is now
\begin{equation*}
\|X-X_k\|_2 = \sigma_{k+1}.
\end{equation*}
In contrast with the Frobenius norm, the operator norm measures the worst-case error over all unit vectors. This leads to a formulation in terms of worst-case approximation over the image of the unit ball under $X$. More precisely,
\begin{equation*}
\|(I-P_V)X\|_2
=
\sup_{\|z\|_2\le 1}\|(I-P_V)Xz\|_2
=
\sup_{y\in X(B(0,1))}\|(I-P_V)y\|_2=\min_{\dim(V)=k}\sup_{y\in X(B(0,1))}\inf_{v\in V}\|y-v\|_2,
\end{equation*}
where $B(0,1)\subset \mathbb R^n$ denotes the Euclidean unit ball. 
Thus, if we define
$
K := X(B(0,1))\subset \mathbb R^m$, i.e., as the image of the unit ball by the linear transformation with matrix representation $X$,
then
\begin{equation*}
d_k(K;\mathbb R^m)
=
\min_{\dim(V)=k}\sup_{y\in K}\inf_{v\in V}\|y-v\|_2
=
\min_{\operatorname{rank}(Y)\le k}\|X-Y\|_2
=
\sigma_{k+1}.
\end{equation*}

The two viewpoints may be summarized as follows: the Frobenius norm leads to an optimal approximation problem for the given collection of snapshots in an average squared-error sense, whereas the operator norm leads to an optimal worst-case approximation problem, which is exactly the Kolmogorov $k$-width of the set $X(B(0,1))$. In particular, the decay of the singular values of the snapshot matrix quantifies how well the data can be approximated by low-dimensional linear spaces, a perspective that will be useful when studying the effect of the CDT on the underlying solution manifold.

In this work, this connection between singular value decay and approximation by low-dimensional spaces serves as a guiding principle in the numerical section. There, we compare the spectra of $\mathbb U$ and $\widehat{\mathbb U}$ and use these comparisons to illustrate the extent to which the CDT enhances the compressibility of the solution snapshots.

\section{Scalar Conservative Laws}\label{sec: scalar conservative laws}

\subsection{Vector-Valued Curves}\label{app: vector-val}

In this section we provide proofs for Lemmas \ref{lem:curve-width} and \ref{lem: Bochner}.

\begin{proof}[Proof of Lemma \ref{lem: Bochner}]
    Since $[0,T]$ is compact, $\gamma$ is bounded and uniformly continuous. Hence the Bochner integral
    $\Gamma(t)=\int_0^t \gamma(s)\,ds$
    is well-defined for every $t\in[0,T]$.
    Let us first show that $\Gamma$ is continuous. If $0\leq t_0<t_1\leq T$, then
    \begin{equation*}
        \|\Gamma(t_1)-\Gamma(t_0)\|_{L^2(dr)} 
        =
        \left\|\int_{t_0}^{t_1} \gamma(s)\,ds\right\|_{L^2(dr)} 
        \leq
        \int_{t_0}^{t_1} \|\gamma(s)\|_{L^2(dr)}\,ds.
    \end{equation*}
    Since $\gamma$ is bounded on $[0,T]$, there exists $M>0$ such that $\|\gamma(s)\|_{L^2(dr)}\leq M$ for all  $s\in[0,T]$. Thus,
    \begin{equation*}
        \|\Gamma(t_1)-\Gamma(t_0)\|_{L^2(dr)}
        \leq
        M|t_1-t_0|,   
    \end{equation*}
    which proves that $\Gamma\in C([0,T];L^2(dr))$.   
 Now, let us show that $\Gamma'(t) = \gamma(t)$ in the $L^2(dr)$ sense. Fix $t\in(0,T)$. For $h\neq 0$ sufficiently small so that
    $t+h\in[0,T]$, we have
    \begin{equation*}
        \frac{\Gamma(t+h)-\Gamma(t)}{h}-\gamma(t)
        =
        \frac{1}{h}\int_t^{t+h}\big(\gamma(s)-\gamma(t)\big)\,ds.
    \end{equation*}
    Using the Bochner integral estimate,
    \begin{align*}
        \left\|
        \frac{\Gamma(t+h)-\Gamma(t)}{h}-\gamma(t)
        \right\|_{L^2(dr)}
        &\leq
        \frac{1}{|h|}
        \int_{\min\{t,t+h\}}^{\max\{t,t+h\}}
        \|\gamma(s)-\gamma(t)\|_{L^2(dr)}\,ds\\
        &\leq 
        \sup_{s\in [t-|h|,t+|h|]\cap[0,T]}
        \|\gamma(s)-\gamma(t)\|_{L^2(dr)}.
    \end{align*}
    Since $\gamma$ is continuous as a map from $[0,T]$ into $L^2(dr)$, the right-hand
    side tends to $0$ as $h\to 0$. Therefore, $\Gamma'(t)=\gamma(t)$ for every $t\in(0,T)$. 
    At the endpoints, the same argument gives the one-sided derivatives $\Gamma'_+(0)=\gamma(0)$ and $\Gamma'_-(T)=\gamma(T)$. Since $\Gamma'=\gamma \in C([0,T];L^2(dr))$, then $ \Gamma\in C^1([0,T];L^2(dr))$.
\end{proof}

\begin{proof}[Proof of Lemma \ref{lem:curve-width}]
For the first part, assuming only $\gamma\in C^1([t_0,T];\mathcal{H})$, partition the interval $[t_0, T]$ using $2n+1$ points $t_0, \dots, t_{2n}=T$. Choose the space $V_n$ generated by $\{\gamma(t_{2j-1})\}_{j=1}^n$. Then, for every $t$ there is a $j^*$ such that $|t-t_{2j^*-1}|\leq L/2n$. Using the fundamental theorem of calculus for Bochner integrals (see Lemma \ref{lem: Bochner}), we get that
\begin{align*}
    \|\gamma(t) - \gamma(t_{2j^*-1})\|_{\mathcal H} 
    = 
    \left\|\int \gamma'(t) dt\right\|_{\mathcal H}
    \leq 
    \int \left\| \gamma'(t) \right\|_{\mathcal H}\, dt 
    \leq 
    M_1 L/2n.
\end{align*}

If instead we have $\gamma\in C^2([t_0,T];\mathcal{H})$, let $n\geq 2$. Partition $[t_0,T]$ into $n-1$ equal subintervals with nodes $t_j=t_0+j\frac{L}{n-1}$, $j=0,\ldots,n-1$,
and consider linear subspace of $\mathcal H$
\begin{equation*}
V_n:=\operatorname{span}\{\gamma(t_0),\ldots,\gamma(t_{n-1})\}. 
\end{equation*}
Then $\dim V_n\le n$. On each subinterval $[t_j,t_{j+1}]$, approximate $\gamma(t)$ by the chord joining $\gamma(t_j)$ and $\gamma(t_{j+1})$, which belongs to $V_n$. That is, consider the approximation given by
linear interpolant of $\gamma$ at the endpoints 
\begin{equation*}
(I_j\gamma)(t)
=
\frac{t_{j+1}-t}{t_{j+1}-t_j}\gamma(t_j)
+
\frac{t-t_j}{t_{j+1}-t_j}\gamma(t_{j+1}),
\qquad t\in[t_j,t_{j+1}].
\end{equation*}
By applying the scalar interpolation remainder formula to
$\ell\circ\gamma$, for each continuous linear functional $\ell\in\mathcal H^*$ with $\|\ell\|\le 1$ (where $\mathcal H^*$ denotes the dual space of $\mathcal{H}$),
and then taking the supremum over such $\ell$, we obtain
\begin{equation*}
    \|\gamma(t)-I_j\gamma(t)\|_{\mathcal H}
\le \frac{1}{2} (t-t_j)(t_{j+1}-t)
\sup_{s\in[t_j,t_{j+1}]}\|\gamma''(s)\|_{\mathcal H}\le
\frac{1}{8}\left(\frac{L}{n-1}\right)^2M_2.
\end{equation*}
Therefore, since $(I_j\gamma)(t)\in V_n$, we have $\operatorname{dist}(\gamma(t),V_n)
\le \|\gamma(t)-I_j\gamma(t)\|_{\mathcal H}
\le
\frac{1}{8}\left(\frac{L}{n-1}\right)^2M_2$, obtaining \eqref{eq:bound-M2} for all $n\geq 2$. 
\end{proof}

\subsection{Invertibility of Characteristics Before Shock}\label{sec: app Invertibility of Characteristics Before Shock}

Consider the scalar conservation law \eqref{eq: hyp}.
The characteristic curve starting from $\xi$ is by
$    \bigl(\Phi(\xi,t),t\bigr)
    =
    \bigl(\xi+t f'(\init(\xi)),t\bigr)$.
    For completeness we include the standard result on invertibility of characteristics before characteristic crossing/shock formation \cite[Sections 3.2 and 3.4]{evans2010partial}.
\begin{proposition}
Assume $f\in C^2(\mathbb R)$, $\init\in C^1(\mathbb R)$, and there exists
$\alpha>0$ such that
\begin{equation*}
    \partial_\xi \Phi(\xi,t)
    =
    1+t f''(\init(\xi))\init'(\xi)
    \ge \alpha
    \qquad
    \text{for all }(\xi,t)\in \mathbb R\times[0,T].
\end{equation*}
Then, for every $t\in[0,T]$, the map $    \Phi(\cdot,t):\mathbb R\to\mathbb R$
is one-to-one and onto. Consequently, the spacetime map
$\Psi:\mathbb R\times[0,T]\to \mathbb R\times[0,T]$, $\Psi(\xi,t)=\bigl(\Phi(\xi,t),t\bigr),$
is invertible, with inverse $\Psi^{-1}(x,t)=\bigl(\xi(x,t),t\bigr)$ where
    $\xi(x,t)=\Phi(\cdot,t)^{-1}(x)$.
Moreover, $\Psi^{-1}$ is continuous. If $f$ and $\init$ are smooth, then
$\Psi$ is a smooth diffeomorphism.
\end{proposition}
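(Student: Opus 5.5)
The plan is to reduce everything to strict monotonicity and surjectivity of $\Phi(\cdot,t)$ for each fixed $t$, and then to obtain the regularity of the inverse from the global inverse function theorem applied to $\Psi$.

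\textbf{Injectivity.} Fix $t\in[0,T]$. Since $f\in C^2$ and $\init\in C^1$, the map $\xi\mapsto\Phi(\xi,t)=\xi+tf'(\init(\xi))$ is $C^1$ with $\partial_\xi\Phi\ge\alpha>0$. By the mean value theorem, for $\xi_1<\xi_2$ we get
\begin{equation*}
\Phi(\xi_2,t)-\Phi(\xi_1,t)\ge\alpha(\xi_2-\xi_1)>0 .
\end{equation*}
So $\Phi(\cdot,t)$ is strictly increasing, and in particular injective.

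\textbf{Surjectivity.} The same inequality with a fixed base point gives $\Phi(\xi,t)\ge\Phi(0,t)+\alpha\xi$ for $\xi\ge0$, and $\Phi(\xi,t)\le\Phi(0,t)+\alpha\xi$ for $\xi\le0$. Hence $\Phi(\xi,t)\to\pm\infty$ as $\xi\to\pm\infty$. By continuity and the intermediate value theorem, $\Phi(\cdot,t)$ is onto $\mathbb R$. This shows that $\Psi$ is a bijection of $\mathbb R\times[0,T]$ whose inverse has the stated form, with $\xi(x,t)=\Phi(\cdot,t)^{-1}(x)$.

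\textbf{Continuity of $\Psi^{-1}$.} I expect this to be the main (mild) obstacle, since one must control $\xi(x,t)$ jointly in $(x,t)$. Take $(x_n,t_n)\to(x,t)$ and set $\xi_n:=\xi(x_n,t_n)$ and $\xi:=\xi(x,t)$. The lower bound on $\partial_\xi\Phi$ gives
\begin{align*}
\alpha|\xi_n-\xi| &\le |\Phi(\xi_n,t_n)-\Phi(\xi,t_n)|\\
&= |x_n-\Phi(\xi,t_n)|\\
&\le |x_n-x|+|\Phi(\xi,t)-\Phi(\xi,t_n)|\\
&= |x_n-x|+|t-t_n|\,|f'(\init(\xi))|.
\end{align*}
The right-hand side tends to $0$, so $\xi_n\to\xi$. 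In fact $\xi(\cdot,\cdot)$ is locally Lipschitz.

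\textbf{Differentiability.} $\Psi$ is $C^1$, and its Jacobian matrix is triangular with determinant $\partial_\xi\Phi\ge\alpha>0$. The inverse function theorem therefore gives $C^1$ (respectively $C^k$, or smooth, when $f$ and $\init$ are) local inverses. These local inverses coincide with the global continuous inverse just constructed, so $\Psi$ is a diffeomorphism of the corresponding class. At the time boundary $t=0,T$ one can extend $\Psi$ slightly in $t$: the bound $\partial_\xi\Phi\ge\alpha/2$ persists for $|t-t'|$ small as long as $f''(\init)\init'$ is bounded; otherwise one simply uses one-sided derivatives.
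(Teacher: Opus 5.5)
Your proposal is correct and follows essentially the same route as the paper: strict monotonicity and the linear growth bound give bijectivity of each $\Phi(\cdot,t)$, the same estimate $\alpha|\xi_n-\xi|\le|x_n-x|+|\Phi(\xi,t)-\Phi(\xi,t_n)|$ gives continuity of $\Psi^{-1}$, and the inverse function theorem combined with global bijectivity gives the regularity. Your extra observations are valid refinements that the paper leaves implicit: $\Psi$ is already a $C^1$ diffeomorphism under the stated hypotheses, and the boundary times $t=0,T$ can be handled by extending $\Psi$ in $t$. For that extension no global bound on $f''(\init)\init'$ is needed, since the inverse function theorem is local and nonvanishing of the Jacobian near each boundary point suffices.
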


\begin{proof}
For each fixed $t\in[0,T]$, by hypothesis we have
that $\Phi(\cdot,t)$ is strictly increasing, and hence one-to-one.
We now show that it is also onto. Since
\begin{equation*}
    \Phi(\xi,t)-\Phi(0,t)
    =
    \int_0^\xi \partial_\xi\Phi(s,t)\,ds,
\end{equation*}
and $\partial_\xi\Phi(s,t)\ge \alpha$, it follows that
\begin{equation*}
    \Phi(\xi,t)\to+\infty
    \quad\text{as }\xi\to+\infty,
    \qquad
    \Phi(\xi,t)\to-\infty
    \quad\text{as }\xi\to-\infty.
\end{equation*}
Thus $\Phi(\cdot,t):\mathbb R\to\mathbb R$ is onto. Hence, for every fixed
$t\in[0,T]$, the map $\Phi(\cdot,t)$ is bijective.

It follows that the spacetime map $\Psi(\xi,t)=\bigl(\Phi(\xi,t),t\bigr)$
is also bijective. Indeed, if
$\Psi(\xi_0,t_0)=\Psi(\xi_1,t_1)$,
then the second coordinate gives $t_0=t_1$. Writing this common value as
$t$, the first coordinate gives $\Phi(\xi_0,t)=\Phi(\xi_1,t)$.
Since $\Phi(\cdot,t)$ is one-to-one, $\xi_0=\xi_1$. Hence $\Psi$ is injective.
Surjectivity follows because, for every $(x,t)\in\mathbb R\times[0,T]$,
there exists a unique $\xi=\Phi(\cdot,t)^{-1}(x)$ such that $\Psi(\xi,t)=(x,t)$.
Therefore, $\Psi^{-1}(x,t)=\bigl(\Phi(\cdot,t)^{-1}(x),t\bigr)$.

It remains to prove that $\Psi^{-1}$ is continuous. Let
$(x_n,t_n)\to(x,t)$
    in $\mathbb R\times[0,T]$
and define
\begin{equation*}
    \xi_n:=\Phi(\cdot,{t_n})^{-1}(x_n),
    \qquad
    \xi:=\Phi(\cdot,t)^{-1}(x).
\end{equation*}
Since $\partial_\xi\Phi(\cdot,{t_n})\ge\alpha$, we have $\alpha |\xi_n-\xi|
    \le
    |\Phi_{t_n}(\xi_n)-\Phi_{t_n}(\xi)|$.
Using $\Phi(\xi_n,t_n)=x_n$, we obtain
\begin{equation*}
    \alpha |\xi_n-\xi|
    \le
    |x_n-\Phi(\xi,{t_n})|
    \le
    |x_n-x|+|\Phi(\xi,t)-\Phi(\xi,t_n)|.
\end{equation*}
The right-hand side tends to zero, because $x_n\to x$, $t_n\to t$, and
$\Phi$ is continuous. Hence $\xi_n\to\xi$, proving that $\Psi^{-1}$ is
continuous.

Finally, if $f$ and $\init$ are smooth, then $\Psi$ is smooth. Its
Jacobian matrix is
\begin{equation*}
    D\Psi(\xi,t)
    =
    \begin{bmatrix}
        \partial_\xi\Phi(\xi,t) & \partial_t\Phi(\xi,t) \\
        0 & 1
    \end{bmatrix}
    =
    \begin{bmatrix}
        1+t f''(\init(\xi))\init'(\xi) & f'(\init(\xi)) \\
        0 & 1
    \end{bmatrix},
\end{equation*}
and therefore
\begin{equation*}
    \det D\Psi(\xi,t)
    =
    1+t f''(\init(\xi))\init'(\xi)
    \ge \alpha>0.
\end{equation*}
Thus the inverse function theorem gives smooth local inverses. Since $\Psi$
is globally bijective, these local inverses agree with the global inverse.
Therefore $\Psi^{-1}$ is smooth, and $\Psi$ is a smooth diffeomorphism.
\end{proof}

\begin{remark}[Bounded intervals]
If the initial labels are restricted to a bounded interval $[a,b]$, then the
same fixed-time argument shows that $\Phi_t$ is a bijection from $[a,b]$
onto its image $[\Phi_t(a),\Phi_t(b)]$, provided $\partial_\xi \Phi(\xi,t)>0$
    for all $(\xi,t)\in [a,b]\times[0,T]$.
Thus the spacetime map
$\Psi(\xi,t)=\bigl(\Phi(\xi,t),t\bigr)$
is invertible onto its image. In general this image is a moving strip rather
than the fixed rectangle $[a,b]\times[0,T]$, because the endpoints of the
interval are transported by the characteristic flow.
\end{remark}

\section{Advection-Diffusion Equation}

We recall general facts about solutions of the diffusion equation. Although these results are known, it was not trivial to find the exact references for our assumptions. For completeness in the exposition we provide the necessary adaptations needed starting from similar results in the area. We start by recalling the definition of the one-dimensional heat kernel for $s>0$, 
\begin{equation}\label{eq: the heat kernel}
        G_s(x)
    :=
    \tfrac{1}{\sqrt{4\pi s}}
    e^{-\tfrac{x^2}{4s}},
    \qquad x\in\RR.
\end{equation}

\begin{lemma}
\label{lem:heat-kernel-diffusion}
Let $D>0$, $A\in \RR$, and assume $u_0$ is a probability density (in particular, $0\leq u_0\in L^1(\RR)$).
Set
\begin{equation}
    \label{eq: convolution_diffusion_solution}
        \begin{cases}
            u(x,t)
            :=
            (G_{Dt}*u_0)(x-At)
            =
            \int_{\RR}G_{Dt}(y)u_0(x-At-y)\,dy,  \qquad \text{ for } t>0,\\
            u(x,0) = u_0
        \end{cases}
\end{equation}
Then, $u\in C^\infty(\RR\times(0,T])$, $u>0$ on $\RR\times(0,T]$, and it is the solution of the diffusion equation in the sense that 
    \begin{align*}
        &u_t(x,t) = Du_{xx}(x,t) - Au_x(x,t), \qquad \text{for } t>0\\
        &u(\cdot,t) \to u_0 \text{ in } L^1(\RR) \qquad \text{as } t\to 0.
    \end{align*}
    Moreover, $t\mapsto u_t(\cdot,t) \in C((0,T];L^1(\RR))$.
\end{lemma}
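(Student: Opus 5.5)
The plan is to reduce to the pure heat equation and then use standard properties of the Gaussian kernel. Set $w(y,t):=(G_{Dt}*u_0)(y)$, so that $u(x,t)=w(x-At,t)$. Smoothness and positivity will come first. For $t\ge\delta>0$ the kernel $G_{Dt}$ and all its $(y,t)$-derivatives are bounded by integrable Gaussian-type envelopes: each is a polynomial in $y$ and $t^{-1/2}$ times $e^{-y^2/(4D T)}$ up to constants, uniformly for $t\in[\delta,T]$. Since $u_0\in L^1$, dominated convergence lets us differentiate under the integral to any order. Hence $w\in C^\infty(\RR\times(0,T])$, and since the map $(x,t)\mapsto(x-At,t)$ is smooth, $u\in C^\infty(\RR\times(0,T])$ as well. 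Positivity is immediate: $G_{Dt}>0$ everywhere, $u_0\ge0$ and $\int u_0=1$, so $u_0$ is not a.e. zero and the convolution is strictly positive.

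Next I verify the PDE. Direct computation gives $\partial_t G_{Ds}|_{s=Dt}\cdot D = D\,\partial_{yy}G_{Dt}$, so $w_t=Dw_{yy}$. The chain rule then yields $u_t=w_t-Aw_y=Du_{xx}-Au_x$. For the initial condition I would write
\[
\|u(\cdot,t)-u_0\|_{L^1}\le \|w(\cdot,t)-u_0\|_{L^1}+\|u_0(\cdot-At)-u_0\|_{L^1}.
\]
The second term tends to zero by continuity of translation in $L^1$. The first term is bounded by $\int G_{Dt}(y)\|u_0(\cdot-y)-u_0\|_{L^1}\,dy$, using Minkowski's integral inequality and $\int G_{Dt}=1$. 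This also tends to zero, because $G_{Dt}$ concentrates at $0$ and the translation modulus is bounded by $2$ and continuous at $0$.

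The last claim, $u_t\in C((0,T];L^1)$, is where care is needed. I would write
\[
u_t(\cdot,t)=\big(D\,G_{Dt}''*u_0 - A\,G_{Dt}'*u_0\big)(\cdot-At).
\]
Young's inequality then reduces the question to two facts. The first is that $t\mapsto G_{Dt}',G_{Dt}''$ is continuous into $L^1$ on $[\delta,T]$; this holds by dominated convergence using the uniform Gaussian envelope above. The second is that composing with the translation by $At$ preserves $L^1$-continuity. For this I would split
\[
\|f_t(\cdot-At)-f_s(\cdot-As)\|_{L^1}\le\|f_t-f_s\|_{L^1}+\|f_s(\cdot-At)-f_s(\cdot-As)\|_{L^1}.
\]
The second term must go to zero uniformly in $s$ near a fixed time $\tau$. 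I would get this by using continuity of $s\mapsto f_s$ at $\tau$ together with continuity of translation for the single function $f_\tau$.
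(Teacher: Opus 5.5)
Your proposal is correct and follows essentially the same route as the paper. The shared steps are: differentiation under the integral using uniform Gaussian bounds on the kernel derivatives for $t\in[\delta,T]$; positivity from $u_0\not\equiv 0$; the approximate-identity/Minkowski argument for $u(\cdot,t)\to u_0$ in $L^1(\RR)$; and Young's inequality combined with $L^1$-continuity in $t$ of the kernel derivatives (via dominated convergence) for $t\mapsto u_t(\cdot,t)\in C((0,T];L^1(\RR))$.

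The one real difference is the drift. You keep it explicitly through $u(x,t)=w(x-At,t)$ and continuity of translations in $L^1(\RR)$, whereas the paper sets $A=0$ without loss of generality, so your version supplies the steps that reduction leaves implicit. In your final step you can simply take $s=\tau$, so no uniformity in $s$ is needed.
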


\begin{proof}
    Without loss of generality we assume the drift term $A=0$.
    Following the proof of \cite[Section 2.3, Theorem 1]{evans2010partial}, since $G_{Dt}$ is infinitely differentiable with uniformly bounded derivatives of all orders, on $\RR\times [t_0,\infty)$ for any $t_0>0$,  $u$ satisfies 
    \begin{enumerate}
        \item 
            $u\in C^\infty(\RR\times(0,T])$, with $\partial^i_t \partial^j_x u(x,t) = ((\partial^i_t\partial^j_x G_{Dt}) * u_0)(x)$, for $t>0$ (for all $i,j\in \mathbb N$).
        \item 
        $u$ is a solution of the diffusion equation $u_t=Du_{xx}$ for $x\in \RR$ and $t>0$.
    \end{enumerate}

    Since $u_0\ge 0$ with $\int_{\mathbb R}u_0(y)\,dy=1$ (in particular, $u_0\not\equiv 0$)
we have
\begin{equation*}
u(x,t)=\int_{\mathbb R}G_{Dt}(x-y)u_0(y)\,dy>0 \qquad \forall x\in \RR, t>0.
\end{equation*}

    To prove convergence to the initial datum in $L^1(\RR)$, notice that $G_{Dt}$ is an approximate identity as $t\to0$, hence $G_{Dt}*u_0\to u_0$ in $L^1(\mathbb R)$.
Indeed, by Minkowski's integral inequality, and 
using the continuity of translations in $L^1(\mathbb R)$, for every $\varepsilon>0$ there exists $\delta>0$ such that 
\begin{align*}
    \|G_{Dt}*u_0-u_0\|_{L^1(\RR)}
&\le
\int_{\mathbb R}G_{Dt}(y)
\|u_0(\cdot-y)-u_0(\cdot)\|_{L^1(\RR)}\,dy\\
    &\leq \varepsilon \int_{|y|\leq\delta}G_{Dt}(y)\, dy + 2\|u_0\|_{L^1(\RR)} \int_{|y|\geq\delta}  G_{Dt}(y)\, dy.
\end{align*}
Taking the $\limsup_{t\to 0}$ on both sides of the inequality, we get $\limsup_{t\to 0} \|G_{Dt}*u_0-u_0\|_{L^1(\RR)} \leq \varepsilon$ and since $\varepsilon$ is arbitrary the limit must be zero.

Finally, since
$u_t(\cdot,t)=\partial_tG_{Dt}*u_0$, Young's convolution inequality gives, for $s,t>0$,
\begin{equation*}
\|u_t(\cdot,t)-u_t(\cdot,s)\|_{L^1(\RR)}
\le
\|\partial_tG_{Dt}-\partial_tG_{Ds}\|_{L^1(\RR)}\|u_0\|_{L^1(\RR)}.
\end{equation*}
But $\|u_0\|_{L^1(\RR)}=1$, and
$t\mapsto \partial_tG_{Dt}$ is continuous from $(0,T]$ into $L^1(\mathbb R)$
(which follows from dominated  convergence theorem). Therefore, $t\mapsto u_t(\cdot,t)
\in C((0,T];L^1(\mathbb R))$.
\end{proof}

\begin{lemma} \label{lem: extra regularity diffusion}
    Let $u_0\in C^3(\RR)$, $u_0>0$, with $u_0^{(j)}\in L^\infty(\RR)\cap L^1(\RR)$, $j=0,1,2,3$, and let $u$ be defined by \eqref{eq: convolution_diffusion_solution}. Then, $u, u_x, u_t, u_{xx}, u_{xt}=u_{tx}, u_{xxx} \in C(\RR\times [0,T])$. Also, $u$ is a solution of the advection-diffusion equation in the classical sense up to $t=0$ (included). Moreover, $t \mapsto u_t(\cdot,t) \in C([0,T];L^1(\RR))$.
\end{lemma}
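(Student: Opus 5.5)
The plan is to reduce to the pure diffusion case and then move every spatial derivative off the heat kernel and onto $u_0$, so that the behaviour at $t=0$ is controlled by approximate-identity arguments.

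\textbf{Step 1 (reduction and derivative formulas).} Since $u(x,t)=w(x-At,t)$ with $w(\cdot,t):=G_{Dt}*u_0$, and the map $(x,t)\mapsto(x-At,t)$ is a smooth diffeomorphism of $\mathbb R\times[0,T]$, it suffices to prove the claims for $w$. The advective terms can be recovered by the chain rule: $u_x=w_x(x-At,t)$ and $u_t=w_t(x-At,t)-Aw_x(x-At,t)$. For $t>0$ and $j\le 3$, differentiate under the integral in the variable $x-y$ of $\int G_{Dt}(y)u_0(x-y)\,dy$. This is justified by dominated convergence, since $u_0^{(j)}\in L^\infty$ and $G_{Dt}\in L^1$. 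It gives
\begin{equation*}
\partial_x^j w(\cdot,t)=G_{Dt}*u_0^{(j)},\qquad t>0 .
\end{equation*}
At $t=0$ set $\partial_x^j w(\cdot,0):=u_0^{(j)}$. By Lemma \ref{lem:heat-kernel-diffusion} we also have $w_t=Dw_{xx}=D\,G_{Dt}*u_0''$ and $w_{xt}=D\,G_{Dt}*u_0'''$ for $t>0$.

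\textbf{Step 2 (joint continuity up to $t=0$).} For a bounded continuous $g$ (here $g=u_0^{(j)}$), I will show that $(x,t)\mapsto (G_{Dt}*g)(x)$, extended by $g$ at $t=0$, is continuous at every point $(x_0,0)$. Write
\begin{equation*}
|(G_{Dt}*g)(x)-g(x_0)|\le\int G_{Dt}(y)\,|g(x-y)-g(x_0)|\,dy .
\end{equation*}
Split the integral into $|y|<\delta$ and $|y|\ge\delta$. On the first piece, use continuity of $g$ at $x_0$ with $|x-x_0|<\delta$. On the second piece, bound the integrand by $2\|g\|_\infty$ and use $\int_{|y|\ge\delta}G_{Dt}\to0$. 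Continuity at points with $t>0$ is already given by Lemma \ref{lem:heat-kernel-diffusion}. This yields $w,w_x,w_{xx},w_{xxx}\in C(\mathbb R\times[0,T])$, and therefore $w_t=Dw_{xx}$ and $w_{xt}=Dw_{xxx}$ extend continuously to $t=0$.

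\textbf{Step 3 (classical solution at $t=0$).} This is the step I expect to be the main obstacle: showing that the continuous extension of $w_t$ really is the one-sided time derivative at $t=0$. Pointwise in $x$, the fundamental theorem of calculus on $[\varepsilon,t]$ gives
\begin{equation*}
w(x,t)-w(x,\varepsilon)=\int_\varepsilon^t Dw_{xx}(x,s)\,ds .
\end{equation*}
Let $\varepsilon\to0$. The left-hand side converges by Step 2, and the right-hand side converges because the integrand is continuous up to $s=0$. Hence $w(x,t)-u_0(x)=\int_0^t Dw_{xx}(x,s)\,ds$, so $w_t(x,0^+)=Du_0''(x)$ exists and is continuous, and the PDE holds classically at $t=0$. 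The same argument applied to $w_x$ gives $w_{xt}=w_{tx}$ continuous up to $t=0$.

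\textbf{Step 4 ($L^1$-continuity of $u_t$).} Write $u_t(\cdot,t)=\big(G_{Dt}*h\big)(\cdot-At)$ with $h:=Du_0''-Au_0'\in L^1$. For $t,s>0$, use Young's inequality as in Lemma \ref{lem:heat-kernel-diffusion}, together with continuity of translations in $L^1$. At $t=0$, the approximate-identity computation from the proof of Lemma \ref{lem:heat-kernel-diffusion}, applied to $h$ in place of $u_0$ and combined with $\|h(\cdot-At)-h\|_{L^1}\to0$, gives $u_t(\cdot,t)\to h=u_t(\cdot,0)$ in $L^1(\mathbb R)$. Hence $t\mapsto u_t(\cdot,t)\in C([0,T];L^1(\mathbb R))$.
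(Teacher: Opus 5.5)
Your proposal is correct and follows essentially the same route as the paper: reduce to $A=0$, and move the spatial derivatives onto $u_0$ so that joint continuity up to $t=0$ follows from the approximate-identity argument. You then identify $u_t(\cdot,0)=Du_0''$ via the fundamental theorem of calculus on $[\varepsilon,t]$ with $\varepsilon\downarrow 0$, and read off $L^1$-continuity of $u_t$ from its convolution representation. The differences are only in level of detail: you justify the derivative formulas by differentiating under the integral sign rather than by integration by parts, prove the boundary continuity directly instead of citing Evans, and treat the drift term and the $L^1$ limit at $t=0$ more explicitly than the paper does.
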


\begin{proof}
As in the previous proof,  without loss of generality we assume the drift term $A=0$.
    In \cite[Section 2.3, Theorem 1]{evans2010partial} it is proved that if $u_0\in C(\RR)\cap L^\infty(\RR)$, then $u$ defined by \eqref{eq: convolution_diffusion_solution}, satisfies
    \begin{enumerate}
        \item \label{item: open_cont} 
            $u\in C^\infty(\RR\times(0,T])$, with $\partial^i_t \partial^j_x u(x,t) = ((\partial^i_t\partial^j_x G_{Dt}) * u_0)(x)$, for $t>0$ (for all $i,j\in \mathbb N$).
        \item \label{item: solution_of_diffusion} 
            $u$ is a solution of the diffusion equation $u_t=Du_{xx}$ for $x\in \RR$ and $t>0$.
        \item \label{item: boundary_cont}
            $u(x,t) \to u_0(x_0)$ as $(x,t)\to(x_0,0)$ (which, as we defined $u(\cdot,0)=u_0$, means $u\in C(\RR\times[0,T])$).
    \end{enumerate}
    To prove that the spatial derivatives $u_x,u_{xx},u_{xxx} \in C(\RR\times[0,T])$  we apply Item \ref{item: boundary_cont} for $u_0^{(j)} \in C(\RR)\cap L^\infty(\RR)$, $j=1,2,3$, in place of $u_0$ in formula \eqref{eq: convolution_diffusion_solution}. The only caveat is to show that the partial derivatives can be passed to  $u_0$ through the convolution. This is done, for $t>0$, using integration by parts and Item \ref{item: open_cont},
    \begin{equation}
        \partial^j_x u = ((\partial^j_x G_{Dt}) * u_0)(x) =  G_{Dt} * u_0^{(j)}\qquad \text{ for } j=0, 1,2,3.
    \end{equation}

    Now we have to prove the continuity of $u_t,u_{xt},u_{tx}$. We start with $u_t$. By Item \ref{item: solution_of_diffusion}, $u_t = Du_{xx}$  for $t>0$. Since $u_{xx} \in C(\RR\times[0,T])$, with $u_{xx}(x,0) =u_0''(x)$, we only need to show that $u_t(x,0) = Du_0''(x)$ for every $x\in\RR$.  For $0<\varepsilon<h$, the fundamental theorem of calculus gives
    \begin{equation*}
        u(x,h) - u(x,\varepsilon) = 
        \int_\varepsilon^h
            u_s(x,s)\,ds
        =
        \int_\varepsilon^h
            Du_{xx}(x,s)\,ds.
    \end{equation*}
    Letting $\varepsilon\downarrow0$ we obtain
    \begin{equation*}
        u(x,h) -u(x,0)
        =
        \int_0^h Du_{xx}(x,s)\,ds.
    \end{equation*}
    Hence, as  $h\to 0$, 
    \[
    \begin{aligned}
            \frac{u(x,h)-u(x,0)}{h}-Du_0''(x)
        &=
        \frac{D}{h}
        \int_0^h
            u_{xx}(x,s)-u_0''(x)\,ds
        \longrightarrow0. 
    \end{aligned}
    \]
    Thus, $u_t(x,0) =  Du_0''(x) = Du_{xx}(x,0)$ as we wanted to prove. Now, the continuity of $u_{tx}$ is trivial since $u_{tx} = Du_{xxx}$. To prove that $u_{xt}$ is continuous we just need to do as with $u_t$ but with $u_x$ in place of $u$ and $u_{xxx}$ in place of $u_{xx}$.

    Finally, the $L^1$ claim follows from the expression
$u_t
=
D\,G_{Dt} * u_0''
-
A\,G_{Dt} * u_0'$.

\end{proof}

The next lemma is not exactly about the solution of the advection-diffusion equation, but will be used in the main text to find bounds of certain integrals associated with it. It is an adaptation of \cite[Lemma 8.1.10]{ambrosio2005gradient} from general measures to probability densities on the real line.

\begin{lemma}\label{lem: convolution ambrosio}
Let $p\geq 1$, let $\mu$ be a probability density on $\mathbb R$, and let
$\Theta:\mathbb R\to\mathbb R$ satisfy
\begin{equation*}
\int_{\mathbb R}|\Theta(x)|^p \mu(x)\,dx<\infty.
\end{equation*}
Then, for any nonnegative probability convolution kernel $\rho\in L^1(\RR)$ (i.e., $\rho\geq 0$ a.e. and $\int \rho=1$),
\begin{equation*}
\int_{\mathbb R}
\left|
\frac{(\Theta \, \mu)*\rho}{\mu*\rho}
\right|^p
(\mu*\rho)\,dx
\leq
\int_{\mathbb R}|\Theta|^p \mu\,dx.
\end{equation*}
Here the quotient is understood to be $0$ on the set where $\mu*\rho=0$.
\end{lemma}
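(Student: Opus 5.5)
The statement to prove is Lemma~\ref{lem: convolution ambrosio}: a Jensen-type inequality for the convolved ``velocity'' $\frac{(\Theta\mu)*\rho}{\mu*\rho}$. The plan is to use convexity of $z\mapsto|z|^p$ pointwise in $x$, then integrate and apply Fubini.

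\textbf{Step 1: a local probability measure.} Fix $x$ with $(\mu*\rho)(x)>0$. Define the probability measure on $\mathbb R$
\begin{equation*}
d\nu_x(y):=\frac{\rho(x-y)\mu(y)}{(\mu*\rho)(x)}\,dy .
\end{equation*}
With this measure,
\begin{equation*}
\frac{((\Theta\mu)*\rho)(x)}{(\mu*\rho)(x)}=\int\Theta(y)\,d\nu_x(y).
\end{equation*}

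\textbf{Step 2: Jensen.} Since $p\ge1$, the map $z\mapsto|z|^p$ is convex, so Jensen's inequality gives
\begin{equation*}
\left|\int\Theta\,d\nu_x\right|^p\le\int|\Theta(y)|^p\,d\nu_x(y).
\end{equation*}
Multiplying by $(\mu*\rho)(x)$ yields the pointwise bound
\begin{equation*}
\left|\frac{(\Theta\mu)*\rho}{\mu*\rho}\right|^p(\mu*\rho)(x)\le\int|\Theta(y)|^p\mu(y)\rho(x-y)\,dy=\bigl((|\Theta|^p\mu)*\rho\bigr)(x).
\end{equation*}
Where $\mu*\rho=0$, the left side is $0$ by the stated convention, so the bound holds trivially there.

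\textbf{Step 3: integrate.} Integrate the pointwise bound in $x$. Tonelli's theorem applies because the integrand is nonnegative, and since $\int\rho=1$,
\begin{equation*}
\int\bigl((|\Theta|^p\mu)*\rho\bigr)(x)\,dx=\int|\Theta|^p\mu\,dy.
\end{equation*}
This gives exactly the stated inequality.

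\textbf{Main obstacle: well-definedness.} The only delicate point is that $(\Theta\mu)*\rho$ must be finite, so that the integral $\int\Theta\,d\nu_x$ exists. The hypothesis $\Theta\in L^p(\mu)\subset L^1(\mu)$ settles this: $\mu$ is a probability density and $p\ge1$, so $\Theta\mu\in L^1$. Hence $(|\Theta|\mu)*\rho$ is finite for a.e.\ $x$, and for those $x$ the function $\Theta$ is $\nu_x$-integrable. The remaining null set of $x$ does not affect the integral in Step 3. Measurability of all the integrands follows from Fubini applied to $\Theta(y)\mu(y)\rho(x-y)$.
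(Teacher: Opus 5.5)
Your proposal is correct and follows the paper's proof essentially step by step. Both arguments use the same ingredients: the probability measure $\nu_x$, Jensen's inequality for $|\cdot|^p$, multiplication by $\mu*\rho$, and Tonelli together with $\int\rho=1$. Your added remark that $(|\Theta|\mu)*\rho$ is finite almost everywhere, via $L^p(\mu)\subset L^1(\mu)$ and Young's inequality, is a small rigor point that the paper leaves implicit.
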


\begin{proof}
    On $\mathbb R$ the proof is simple. Let
\begin{equation*}
q(x):=(\mu * \rho)(x)
=
\int_{\mathbb{R}} \mu(y)\rho(x-y)\,dy.
\end{equation*}
On the set where $q(x)>0$, define the probability measure
\begin{equation*}
d\nu_x(y)
:=
\frac{\mu(y)\rho(x-y)}{q(x)}\,dy.
\end{equation*}
Indeed,
\begin{equation*}
\int_{\mathbb{R}} d\nu_x(y)
=
\frac{1}{q(x)}
\int_{\mathbb{R}}\mu(y)\rho(x-y)\,dy
=1.
\end{equation*}
Moreover,
\begin{equation*}
\frac{((\Theta\mu)*\rho)(x)}{(\mu*\rho)(x)}
=
\frac{\int_{\mathbb{R}}\Theta(y)\mu(y)\rho(x-y)\,dy}{q(x)}
=
\int_{\mathbb{R}}\Theta(y)\,d\nu_x(y).
\end{equation*}
Since $p\geq 1$, the function $z\mapsto |z|^p$ is convex. Therefore,
by Jensen's inequality,
\begin{equation*}
\left|
\frac{((\Theta\mu)*\rho)(x)}{(\mu*\rho)(x)}
\right|^p
\leq
\int_{\mathbb{R}}|\Theta(y)|^p\,d\nu_x(y).
\end{equation*}
Multiplying by $q(x)=(\mu*\rho)(x)$, we obtain
\begin{equation*}
\left|
\frac{((\Theta\mu)*\rho)(x)}{(\mu*\rho)(x)}
\right|^p
(\mu*\rho)(x)
\leq
\int_{\mathbb{R}}
|\Theta(y)|^p\mu(y)\rho(x-y)\,dy.
\end{equation*}
On the set where $(\mu*\rho)(x)=0$, the left-hand side is defined to be
zero, so the same inequality remains valid.

Integrating with respect to $x$ and applying Tonelli's theorem gives
\begin{align*}
\int_{\mathbb{R}}
\left|
\frac{((\Theta\mu)*\rho)(x)}{(\mu*\rho)(x)}
\right|^p
(\mu*\rho)(x)\,dx
&\leq
\int_{\mathbb{R}}
\int_{\mathbb{R}}
|\Theta(y)|^p\mu(y)\rho(x-y)\,dy\,dx \\
&=
\int_{\mathbb{R}}
|\Theta(y)|^p\mu(y)
\left(
\int_{\mathbb{R}}\rho(x-y)\,dx
\right)\,dy.
\end{align*}
Since $\rho$ is a probability density,
\begin{equation*}
\int_{\mathbb{R}}\rho(x-y)\,dx
=
\int_{\mathbb{R}}\rho(z)\,dz
=1.
\end{equation*}
Hence
\begin{equation*}
\int_{\mathbb{R}}
\left|
\frac{((\Theta\mu)*\rho)(x)}{(\mu*\rho)(x)}
\right|^p
(\mu*\rho)(x)\,dx
\leq
\int_{\mathbb{R}}|\Theta(y)|^p\mu(y)\,dy,
\end{equation*}
which proves the claim.
\end{proof}

\bibliographystyle{IEEEtran}
\bibliography{references}

@article{mojgani2017lagrangian,
  title={Lagrangian basis method for dimensionality reduction of convection dominated nonlinear flows},
  author={Mojgani, Rambod and Balajewicz, Maciej}, 
  howpublished = "\url{http://aiweb.techfak.uni-bielefeld.de/content/bworld-robot-control-software/}",
  journal={arXiv preprint arXiv:1701.04343},
  year={2017}
}

@book{Santambrogio-OTAM,
  title     = {Optimal Transport for Applied Mathematicians},
  publisher = {Birkh\"auser Boston},
  year      = {2015},
  author    = {Santambrogio, Filippo},
  volume    = {87},
  series    = {PNLDE}}

@article{long2025reduced,
  title={A reduced-order model for advection-dominated problems based on the {Radon Cumulative Distribution Transform}},
  author={Long, Tobias and Barnett, Robert and Jefferson-Loveday, Richard and Stabile, Giovanni and Icardi, Matteo},
  journal={Advances in Computational Mathematics},
  volume={51},
  number={1},
  pages={5},
  year={2025},
  publisher={Springer}
}

@article{ren2021model,
  title={Model reduction of traveling-wave problems via {Radon} cumulative distribution transform},
  author={Ren, Jie and Wolf, William R and Mao, Xuerui},
  journal={Physical Review Fluids},
  volume={6},
  number={8},
  pages={L082501},
  year={2021},
  publisher={APS}
}

@article{rowley2017model,
  title={Model reduction for flow analysis and control},
  author={Rowley, Clarence W and Dawson, Scott TM},
  journal={Annual Review of Fluid Mechanics},
  volume={49},
  number={1},
  pages={387--417},
  year={2017},
  publisher={Annual Reviews}
}

@article{brunton2016discovering,
  title={Discovering governing equations from data by sparse identification of nonlinear dynamical systems},
  author={Brunton, Steven L and Proctor, Joshua L and Kutz, J Nathan},
  journal={Proceedings of the national academy of sciences},
  volume={113},
  number={15},
  pages={3932--3937},
  year={2016},
  publisher={National Academy of Sciences}
}

@book{evans2010partial,
  title={Partial differential equations},
  author={Evans, Lawrence C},
  volume={19},
  year={2010},
  publisher={American mathematical society}
}

@book{dafermos2005hyberbolic,
  title={Hyperbolic conservation laws in continuum physics},
  author={Dafermos, Constantine M},
  year={2005},
  publisher={Springer}
}

@incollection{cercignani1988boltzmann,
  title={The {Boltzmann} equation},
  author={Cercignani, Carlo},
  booktitle={The Boltzmann equation and its applications},
  pages={40--103},
  year={1988},
  publisher={Springer}
}

@article{morrison1998hamiltonian,
  title={Hamiltonian description of the ideal fluid},
  author={Morrison, Philip J},
  journal={Reviews of modern physics},
  volume={70},
  number={2},
  pages={467},
  year={1998},
  publisher={APS}
}

@article{akram22sign,
title = {The Signed Cumulative Distribution Transform for 1-D signal analysis and classification},
journal = {Foundations of Data Science},
volume = {4},
number = {1},
pages = {137-163},
year = {2022},
author = {Aldroubi, Akram  and Díaz Martín, Rocio  and Medri, Ivan and Rohde,  Gustavo K.  and Thareja, Sumati}
}

@article{schmid2022dynamic,
  title={Dynamic mode decomposition and its variants},
  author={Schmid, Peter J},
  journal={Annual Review of Fluid Mechanics},
  volume={54},
  number={1},
  pages={225--254},
  year={2022},
  publisher={Annual Reviews}
}

@phdthesis{tu2013dynamic,
  title={Dynamic mode decomposition: Theory and applications},
  author={Tu, Jonathan H},
  year={2013},
  school={Princeton University}
}

@article{berkooz1993proper,
  title={The proper orthogonal decomposition in the analysis of turbulent flows},
  author={Berkooz, Gal and Holmes, Philip and Lumley, John L},
  journal={Annual review of fluid mechanics},
  volume={25},
  number={1},
  pages={539--575},
  year={1993},
  publisher={Annual Reviews 4139 El Camino Way, PO Box 10139, Palo Alto, CA 94303-0139, USA}
}

@article{lu2020lagrangian,
  title={Lagrangian dynamic mode decomposition for construction of reduced-order models of advection-dominated phenomena},
  author={Lu, Hannah and Tartakovsky, Daniel M},
  journal={Journal of Computational Physics},
  volume={407},
  pages={109229},
  year={2020},
  publisher={Elsevier}
}

@article {FBlack_PSchulze_BUnger_2020a,
    AUTHOR = {F. Black and Ph. Schulze and B. Unger},
   FAUTHOR = {Black, Felix and Schulze, Philipp and Unger, Benjamin},
     TITLE = {Projection-based model reduction with dynamically transformed
              modes},
   JOURNAL = {ESAIM Math. Model. Numer. Anal.},
  FJOURNAL = {ESAIM. Mathematical Modelling and Numerical Analysis},
    VOLUME = {54},
      YEAR = {2020},
    NUMBER = {6},
     PAGES = {2011--2043}
     }

@article {BPeherstorfer_2020a,
    AUTHOR = {B. Peherstorfer},
   FAUTHOR = {Peherstorfer, Benjamin},
     TITLE = {Model {R}eduction for {T}ransport-{D}ominated {P}roblems via
              {O}nline {A}daptive {B}ases and {A}daptive {S}ampling},
   JOURNAL = {SIAM J. Sci. Comput.},
  FJOURNAL = {SIAM Journal on Scientific Computing},
    VOLUME = {42},
      YEAR = {2020},
    NUMBER = {5},
     PAGES = {A2803--A2836}
}

@article{amsallem2012nonlinear,
  title={Nonlinear model order reduction based on local reduced-order bases},
  author={Amsallem, David and Zahr, Matthew J and Farhat, Charbel},
  journal={International Journal for Numerical Methods in Engineering},
  volume={92},
  number={10},
  pages={891--916},
  year={2012},
  publisher={Wiley Online Library}
}

@book{pinkus2012n,
  title={${N}$-widths in Approximation Theory},
  author={Pinkus, Allan},
  volume={7},
  year={2012},
  publisher={Springer Science \& Business Media}
}

@article{arbes2025kolmogorov,
  title={The {Kolmogorov} {$n$}-width for linear transport: exact representation and the influence of the data},
  author={Arbes, Florian and Greif, Constantin and Urban, Karsten},
  journal={Advances in Computational Mathematics},
  volume={51},
  number={2},
  pages={13},
  year={2025},
  publisher={Springer}
}

@article{park2018cumulative,
  title={The cumulative distribution transform and linear pattern classification},
  author={Park, Se Rim and Kolouri, Soheil and Kundu, Shinjini and Rohde, Gustavo K},
  journal={Applied and computational harmonic analysis},
  volume={45},
  number={3},
  pages={616--641},
  year={2018},
  publisher={Elsevier}
}

@article{ivan2024data,
  title={Data representation with optimal transport},
  author={D{\'\i}az Mart{\'\i}n, Roc{\'\i}o  and Medri, Ivan V and Rohde, Gustavo Kunde},
  journal={arXiv preprint arXiv:2406.15503},
  year={2024}
}

@article{rubaiyat2024end,
  title={End-to-end signal classification in signed cumulative distribution transform space},
  author={Rubaiyat, Abu Hasnat Mohammad and Li, Shiying and Yin, Xuwang and Shifat-E-Rabbi, Mohammad and Zhuang, Yan and Rohde, Gustavo K},
  journal={IEEE Transactions on Pattern Analysis and Machine Intelligence},
  volume={46},
  number={9},
  pages={5936--5950},
  year={2024},
  publisher={IEEE}
}

@article{rubaiyat2024data,
  title={Data-driven identification of parametric governing equations of dynamical systems using the signed cumulative distribution transform},
  author={Rubaiyat, Abu Hasnat Mohammad and Thai, Duy H and Nichols, Jonathan M and Hutchinson, Meredith N and Wallen, Samuel P and Naify, Christina J and Geib, Nathan and Haberman, Michael R and Rohde, Gustavo K},
  journal={Computer methods in applied mechanics and engineering},
  volume={422},
  pages={116822},
  year={2024},
  publisher={Elsevier}
}

@article{rubaiyat2020parametric,
  title={Parametric signal estimation using the cumulative distribution transform},
  author={Rubaiyat, Abu Hasnat Mohammad and Hallam, Kyla M and Nichols, Jonathan M and Hutchinson, Meredith N and Li, Shiying and Rohde, Gustavo K},
  journal={IEEE Transactions on Signal Processing},
  volume={68},
  pages={3312--3324},
  year={2020},
  publisher={IEEE}
}

@inproceedings{rubaiyat2022nearest,
  title={Nearest subspace search in the signed cumulative distribution transform space for 1d signal classification},
  author={Rubaiyat, Abu Hasnat Mohammad and Shifat-E-Rabbi, Mohammad and Zhuang, Yan and Li, Shiying and Rohde, Gustavo K},
  booktitle={ICASSP 2022-2022 IEEE International Conference on Acoustics, Speech and Signal Processing (ICASSP)},
  pages={3508--3512},
  year={2022},
  organization={IEEE}
}

@misc{pytranskit,
  author = {G Rohde},
  title = {PyTransKit},
  year = {2021},
  publisher = {GitHub},
  journal = {GitHub repository},
  howpublished = {\url{https://github.com/rohdelab/PyTransKit}}
}

@book{hytonen2016analysis,
  title={Analysis in Banach spaces},
  author={Hyt{\"o}nen, Tuomas and Van Neerven, Jan and Veraar, Mark and Weis, Lutz},
  volume={1},
  year={2016},
  publisher={Springer}
}

@book{CoddingtonLevinson1955,
  author    = {Coddington, Earl A. and Levinson, Norman},
  title     = {Theory of Ordinary Differential Equations},
  publisher = {McGraw-Hill},
  address   = {New York},
  year      = {1955}
}

@article{Kruzhkov1969Continuity,
  author  = {Kru\v{z}kov, S. N.},
  title   = {Results concerning the nature of the continuity of solutions of parabolic equations and some of their applications},
  journal = {Mathematical Notes of the Academy of Sciences of the USSR},
  volume  = {6},
  number  = {1},
  pages   = {517--523},
  year    = {1969}
}

@article{algoritmy,
	author = {Mario Ohlberger and Stephan Rave},
	title = {Reduced Basis Methods: Success, Limitations and Future Challenges},
	journal = {Proceedings of the Conference Algoritmy},
	year = {2016},
	pages = {1--12}
}

@article{taddei2020registration,
  title={A registration method for model order reduction: data compression and geometry reduction},
  author={Taddei, Tommaso},
  journal={SIAM Journal on Scientific Computing},
  volume={42},
  number={2},
  pages={A997--A1027},
  year={2020},
  publisher={SIAM}
}

@article{rim2018model,
  title={Model reduction of a parametrized scalar hyperbolic conservation law using displacement interpolation},
  author={Rim, Donsub and Mandli, Kyle T},
  journal={arXiv preprint arXiv:1805.05938},
  year={2018}
}

@incollection{cagniart2018model,
  title={Model order reduction for problems with large convection effects},
  author={Cagniart, Nicolas and Maday, Yvon and Stamm, Benjamin},
  booktitle={Contributions to partial differential equations and applications},
  pages={131--150},
  year={2018},
  publisher={Springer}
}

@article{rim2018displacement,
  title={Displacement interpolation using monotone rearrangement},
  author={Rim, Donsub and Mandli, Kyle T},
  journal={SIAM/ASA Journal on Uncertainty Quantification},
  volume={6},
  number={4},
  pages={1503--1531},
  year={2018},
  publisher={SIAM}
}

@article{welper2017interpolation,
  title={Interpolation of functions with parameter dependent jumps by transformed snapshots},
  author={Welper, Gerrit},
  journal={SIAM Journal on Scientific Computing},
  volume={39},
  number={4},
  pages={A1225--A1250},
  year={2017},
  publisher={SIAM}
}

@article{rim2023manifold,
  title={Manifold approximations via transported subspaces: Model reduction for transport-dominated problems},
  author={Rim, Donsub and Peherstorfer, Benjamin and Mandli, Kyle T},
  journal={SIAM Journal on Scientific Computing},
  volume={45},
  number={1},
  pages={A170--A199},
  year={2023},
  publisher={SIAM}
}

@article{unger2019kolmogorov,
  author  = {Unger, Benjamin and Gugercin, Serkan},
  title   = {Kolmogorov {$n$}-widths for linear dynamical systems},
  journal = {Advances in Computational Mathematics},
  volume  = {45},
  pages   = {2273--2286},
  year    = {2019}
}

@article{mojgani2023kolmogorov,
  author  = {Mojgani, Rambod and Balajewicz, Maciej and Hassanzadeh, Pedram},
  title   = {Kolmogorov {$n$}-width and {Lagrangian} physics-informed neural networks:
             A causality-conforming manifold for convection-dominated {PDEs}},
  journal = {Computer Methods in Applied Mechanics and Engineering},
  volume  = {404},
  pages   = {115810},
  year    = {2023}
}

@article{wotte2026model,
  author  = {Wotte, Yannik P. and Buchfink, Patrick and Glas, Silke and
             Califano, Federico and Stramigioli, Stefano},
  title   = {Model order reduction via {Lie} groups},
  journal = {Communications in Nonlinear Science and Numerical Simulation},
  volume  = {160},
  pages   = {109973},
  year    = {2026}
}

@article{hesthaven2026nonlinear,
  author  = {Hesthaven, Jan S. and Peherstorfer, Benjamin and Unger, Benjamin},
  title   = {Nonlinear model reduction for transport-dominated problems},
  journal = {Acta Numerica},
  volume  = {35},
  year    = {2026},
  eprint  = {2602.01397},
  archivePrefix = {arXiv}
}

@incollection{gonnella2026nonlinear,
  author    = {Gonnella, Isabella Carla and Pichi, Federico and Rozza, Gianluigi},
  title     = {Nonlinear Reduction Strategies for Data Compression:
               A Comprehensive Comparison from Diffusion to Advection Problems},
  booktitle = {Scientific Machine Learning},
  series    = {SEMA SIMAI Springer Series},
  volume    = {42},
  pages     = {177--198},
  publisher = {Springer},
  address   = {Cham},
  year      = {2026}
}

@article{nonino2023overcoming,
  author  = {Nonino, Monica and Ballarin, Francesco and Rozza, Gianluigi and Maday, Yvon},
  title   = {Overcoming slowly decaying {Kolmogorov} {$n$}-width by transport maps:
             Application to model order reduction of fluid dynamics and
             fluid--structure interaction problems},
  journal = {Advances in Computational Science and Engineering},
  volume  = {1},
  number  = {1},
  pages   = {36--58},
  year    = {2023}
}

@article{thiffeault2003advection,
  title={Advection--diffusion in {Lagrangian} coordinates},
  author={Thiffeault, Jean-Luc},
  journal={Physics Letters A},
  volume={309},
  number={5-6},
  pages={415--422},
  year={2003},
  publisher={Elsevier}
}

@article{peherstorfer2022breaking,
  author  = {Peherstorfer, Benjamin},
  title   = {Breaking the {Kolmogorov} Barrier with Nonlinear Model Reduction},
  journal = {Notices of the American Mathematical Society},
  volume  = {69},
  number  = {5},
  pages   = {725--733},
  year    = {2022}
}

@article{greif2019decay,
  author  = {Greif, Constantin and Urban, Karsten},
  title   = {Decay of the {Kolmogorov} {$n$}-width for wave problems},
  journal = {Applied Mathematics Letters},
  volume  = {96},
  pages   = {216--222},
  year    = {2019}
}

@article{kruvzkov1970first,
  title={First order quasilinear equations in several independent variables},
  author={Kru{\v{z}}kov, Stanislav N},
  journal={Mathematics of the USSR-Sbornik},
  volume={10},
  number={2},
  pages={217--243},
  year={1970}
}

@article{benamou2000computational,
  title={A computational fluid mechanics solution to the {Monge-Kantorovich} mass transfer problem},
  author={Benamou, Jean-David and Brenier, Yann},
  journal={Numerische Mathematik},
  volume={84},
  number={3},
  pages={375--393},
  year={2000},
  publisher={Springer-Verlag Berlin/Heidelberg}
}

@article{otto2001geometry,
  title={The geometry of dissipative evolution equations: The porous medium equation},
  author={Otto, Felix},
  journal={Communications in Partial Differential Equations},
  volume={26},
  number={1--2},
  pages={101--174},
  year={2001},
  publisher={Taylor \& Francis}
}

@book{villani2003topics,
  title={Topics in optimal transportation},
  author={Villani, C{\'e}dric},
  number={58},
  year={2003},
  publisher={American Mathematical Soc.}
}

@book{ambrosio2005gradient,
  title={Gradient flows: in metric spaces and in the space of probability measures},
  author={Ambrosio, Luigi and Gigli, Nicola and Savar{\'e}, Giuseppe},
  year={2005},
  publisher={Springer Science \& Business Media}
}

@article{reiss2018shifted,
  author  = {Reiss, Julius and Schulze, Philipp and Sesterhenn, J{\"o}rn and Mehrmann, Volker},
  title   = {The Shifted Proper Orthogonal Decomposition: A Mode Decomposition for Multiple Transport Phenomena},
  journal = {SIAM Journal on Scientific Computing},
  volume  = {40},
  number  = {3},
  pages   = {A1322--A1344},
  year    = {2018}
}

@article{liu2022neural,
  author  = {Liu, Shu and Li, Wuchen and Zha, Hongyuan and Zhou, Haomin},
  title   = {Neural {Parametric Fokker--Planck Equation}},
  journal = {SIAM Journal on Numerical Analysis},
  volume  = {60},
  number  = {3},
  pages   = {1385--1449},
  year    = {2022}
}

@article{jin2025parameterized,
  author  = {Jin, Yijie and Liu, Shu and Wu, Hao and Ye, Xiaojing
             and Zhou, Haomin},
  title   = {Parameterized {Wasserstein Gradient Flow}},
  journal = {Journal of Computational Physics},
  volume  = {524},
  pages   = {113660},
  year    = {2025}
}

@article{zuo2026numerical,
  author  = {Zuo, Xinzhe and Zhao, Jiaxi and Liu, Shu and Osher, Stanley
             and Li, Wuchen},
  title   = {Numerical Analysis on Neural Network Projected Schemes for             Approximating One Dimensional {Wasserstein} Gradient Flows},
  journal = {Journal of Computational Physics},
  volume  = {546},
  pages   = {114501},
  year    = {2026}
}

@article{benner2015survey,
  author  = {Benner, Peter and Gugercin, Serkan and Willcox, Karen},
  title   = {A Survey of Projection-Based Model Reduction Methods for
             Parametric Dynamical Systems},
  journal = {SIAM Review},
  volume  = {57},
  number  = {4},
  pages   = {483--531},
  year    = {2015}
}

@article{antil2026discrete,
  title={A Discrete Cumulative Distribution Transform via Optimal Transport},
  author={Antil, Harbir and Rohde, Gustavo and Saxena, Aryan},
  journal={arXiv preprint arXiv:2606.12131},
  year={2026}
}

@inbook{HAntil_MHeinkenschloss_DCSorensen_2013a,
	address = {Cham},
	author = {Antil, Harbir and Heinkenschloss, Matthias and Sorensen, Danny C.},
	booktitle = {Reduced Order Methods for Modeling and Computational Reduction},
	editor = {Quarteroni, Alfio and Rozza, Gianluigi},
	isbn = {978-3-319-02090-7},
	pages = {101--136},
	publisher = {Springer International Publishing},
	title = {Application of the Discrete Empirical Interpolation Method to Reduced Order Modeling of Nonlinear and Parametric Systems},	
    year = {2014}
    }

@article{HAntil_MHeinkenschloss_RHoppe_2010c,
	author = {Antil, H. and Heinkenschloss, M. and Hoppe, R. H.W.},
	doi = {10.1080/10556781003767904},
	eprint = {http://www.tandfonline.com/doi/pdf/10.1080/10556781003767904},
	journal = {Optimization Methods and Software},
	number = {4-5},
	pages = {643-669},
	title = {Domain decomposition and balanced truncation model reduction for shape optimization of the Stokes system},
	volume = {26},
	year = {2011}}

@article{HAntil_MHeinkenschloss_RHoppe_DCSorensen_2010b,
	author = {Antil, H. and Heinkenschloss, M. and Hoppe, R.H.W. and Sorensen, D. C.},
	fjournal = {Computing and Visualization in Science},
	issn = {1432-9360},
	journal = {Comput. Vis. Sci.},
	mrclass = {49M25 (49Q10 65K10 65M55)},
	mrnumber = {2748450},
	number = {6},
	pages = {249--264},
	title = {Domain decomposition and model reduction for the numerical solution of {PDE} constrained optimization problems with localized optimization variables},
	volume = {13},
	year = {2010}}

@article{KCarlberg_MBarone_HAntil_2017a,
	author = {Carlberg, Kevin and Barone, Matthew and Antil, Harbir},
	fauthor = {Carlberg, Kevin and Barone, Matthew and Antil, Harbir},
	fjournal = {Journal of Computational Physics},
	issn = {0021-9991},
	journal = {J. Comput. Phys.},
	mrclass = {65L05 (65M20)},
	mrnumber = {3581488},
	pages = {693--734},
	title = {Galerkin v. least-squares {P}etrov-{G}alerkin projection in nonlinear model reduction},
	volume = {330},
	year = {2017}}

\end{document}